\documentclass[reqno,11pt]{amsart}
\usepackage{amsmath}
\usepackage{amssymb}
\usepackage{amsfonts}
\usepackage{color}

\setcounter{MaxMatrixCols}{10}

\setlength{\topmargin}{0.1in} \setlength{\textwidth}{6.00in}
\setlength{\textheight}{8.5in} \setlength{\oddsidemargin}{0in}
\setlength{\evensidemargin}{0in}

\vfuzz2.4pt
\allowdisplaybreaks
\newtheorem{theorem}{Theorem}[section]

\newtheorem{lemma}[theorem]{Lemma}

\theoremstyle{definition}

\theoremstyle{remark}
\newtheorem{remark}[theorem]{Remark}
\theoremstyle{example}

\theoremstyle{problem}

\theoremstyle{fact}

\numberwithin{equation}{section}

\begin{document}
\title[inverse transmission eigenvalue problem]{On the inverse transmission eigenvalue problem with a piecewise $W_2^1$ refractive index}

\author[T. Liu]{Tao Liu}
\thanks{}
\address{School of Mathematics and Statistics, Shaanxi Normal
University, Xi'an 710062, PR China; School of Mathematics, Hangzhou Normal University, Hangzhou 311121, PR China}
\email{liutaomath@163.com}
\author[K. Lyu]{Kang Lyu*}\thanks{* Corresponding author}
\address{School of Mathematics and Statistics, Nanjing
	University of Science and Technology, Nanjing 210094, PR China}
\email{lvkang201905@outlook.com}
\author[G. Wei]{Guangsheng Wei}
\address{School of Mathematics and Statistics, Shaanxi Normal
University, Xi'an 710062, PR China} \email{weimath@vip.sina.com}
\author[C.-F. Yang]{Chuan-Fu Yang}
\address{ School of Mathematics and Statistics, Nanjing University of Science and Technology, Nanjing 210094, PR China}
\email{chuanfuyang@njust.edu.cn}

\subjclass[2020]{Primary 34A55; Secondary 34L40; 34L20}
\keywords{Weyl-Titchmarsh function,
transmission eigenvalue problem,  singular Sturm-Liouville problem, discontinuity, refractive index}

\begin{abstract}
In this paper,  we consider the inverse spectral problem of determining the spherically symmetric refractive index in a
bounded spherical region of radius $b$. Instead of the usual case of the refractive index $\rho\in W^2_2$,
by using singular Sturm-Liouville theory, we {first} discuss the case when the refractive index $\rho$ is a piecewise $ W^1_2$ function. We prove that if $\int_0^b \sqrt{\rho(r)} dr<b$, then $\rho$ is uniquely determined by all special transmission eigenvalues; if $\int_0^b \sqrt{\rho(r)} dr=b$, then all special transmission eigenvalues with some additional information can uniquely determine $\rho$. We also consider the mixed spectral problem and obtain that $\rho$ is uniquely determined from partial information of $\rho$ and the ``almost real subspectrum".
\end{abstract}

\maketitle

\section{Introduction}
The interior transmission  problem appears in scattering theory for inhomogeneous acoustic and electromagnetic media, which was introduced by Kirsch, Colton and Monk \cite{colmon,  kir}.
 It is a non-selfadjoint problem for two fields $w$ and $v$:
	\begin{align}\label{transmission}
\begin{cases}
\Delta w+\lambda \rho(\mathbf{x})w=0,  &  \mathbf{x}\in \Omega, \\
\Delta v+\lambda v=0,& \mathbf{x}\in \Omega,\\
   w=v, \frac{\partial w}{\partial \mathbf{v}}=\frac{\partial v}{\partial \mathbf{v}},& \mathbf{x}\in \partial \Omega.
\end{cases}
\end{align}
Here $\lambda$ is the spectral parameter,   $\Omega$ is a bounded and simply connected set in  $\mathbb{R}^n$ with smooth boundary, $\mathbf{v}$ is the outward unit normal to $\partial \Omega$,  $\rho(\mathbf{x})$ denotes the
refractive index of the medium \cite{colcoy, col1}.
	 The complex values of $\lambda$ for which a nontrivial
solution $(w, v)$ exists are called transmission eigenvalues.
 We refer to \cite{ashann,  ashsima,cak1,  cak3,cak} for the results about the existence  of transmission eigenvalues and bound of eigenvalues of Laplacian, the relation of the bound of transmission eigenvalues and Laplacian eigenvalues.

An interesting and important issue related to the interior transmission problem is the  corresponding inverse spectral problem. Namely, whether we can uniquely  determine $\rho$ in $\Omega$ if all or the certain subset of the transmission eigenvalue are known.
If   $n = 3$, $\Omega=\Omega_b$ is a ball of radius $b>0$ centered at the origin and $\rho(\mathbf{x})$ is
spherically symmetric ($\rho(\mathbf{x}) = \rho(r), r = |\mathbf{x}|$), then problem \eqref{transmission} can be transformed into the  one-dimensional eigenvalue problem \cite{cak2, mcl}.   In this paper, we consider  inverse
 problems of recovering $\rho(r)$ from transmission eigenvalues with spherically symmetric eigenfunctions $(\omega,v)$. Then the inverse problem is equivalent to recovering $\rho$ from  eigenvalues of the special transmission eigenvalue problem $Q(\rho)$
\begin{align}\label{string}
\left\{\begin{aligned}
&-u^{\prime \prime }=\lambda \rho(r)u,\qquad 0<r<b,   \\
&u(0)=0=u'(b)\frac{\sin( \sqrt{\lambda}b)}{\sqrt{\lambda}}-u(b){\cos (\sqrt{\lambda}b)}.
\end{aligned}\right.
\end{align}

 Inverse spectral problem for $Q(\rho)$ was  first studied by ~McLaughlin-Polyakov~\cite{mcl} and then  considered by a number of authors {\cite{but, but2,col,gin}}.  Previous literature on inverse spectral analysis for problem $Q(\rho)$ always considered that $\rho$ is a  $W_2^2$ or piecewise $C^2$  function \cite{act,gin,xux, yang1}.
 Aktosun-Gintides-Papanicolaou \cite{act}
 showed that if $a<b$, then  all special transmission eigenvalues uniquely determine $\rho$; if $a=b$,  then all special transmission eigenvalues together with the  additional constant $\gamma$ uniquely determine $\rho$. Here
 \begin{align}\label{definitiona}
 a=\int_0^b \sqrt{\rho(r)} dr.
 \end{align}
{In particular, when $a = b$,
 for the unique determination of the potential, the additional constant $\gamma$ is necessary to be known \cite{but1}}.
 Wei-Xu \cite{wei} considered the case when $a>b$. They showed that $\rho$ is uniquely determined by all special transmission eigenvalues and normalizing constants corresponding to the partial simple eigenvalues. Later, Yang-Buterin \cite{yang1} gave the uniqueness theorem from the data involving fractions of the special transmission eigenvalues. 
{See \cite{ashcmp,chu,fre,ges,liu1,liucmp,liucpam, mar,xu} more results about  eigenvalue problems and related inverse problems.}

 In this paper, we first investigate inverse spectral problems related to \eqref{string} for a piecewise $W_2^1$ refractive index.
   In this case,  problem \eqref{string} models a complicated medium for a less smooth refractive index with several layers where the index has jumps between each layer.  This uniqueness question shows that less smooth materials with layers can be determined  from the scattered far fields \cite{mcl}.
   It can also be used for the numerical investigation of inverse problems using a piecewise constant approximation of the refractive index \cite{gin1}.
 We consider that $\rho$ has a jump discontinuous point $b_1$. Namely, $\rho\in W_2^1\left((0, b_1)\cup (b_1, b)\right)$ and satisfies
\begin{align}
\rho(b_1+)=b_2\rho(b_1-),\ b_2>0\ \mathrm{and}\ b_2\neq 1,\ \rho(r)>0\ \mathrm{for\ any\ } r\in [0,b_1)\cup (b_1,b].   \label{piecewiseac}
\end{align}
Note that $b_2\neq 1$ is a natural assumption since $\rho$ is continuous {on} the whole interval if $b_2=1$.

The relaxation of the refractive index makes inverse transmission problems much more complicated.
If $\rho\in W_2^2(0,b)$, by Liouville transformation \eqref{liouville}, we can transform the equation $-u''=\lambda \rho u$ into a Sturm-Liouville (SL) equation with the potential
$q\in L^2(0,a)$.  The mapping
\begin{align}\nonumber
\mathcal{M}: \rho\rightarrow (\rho(b),\rho'(b),q(x)  )
\end{align}
 is injective.
 If $\rho$ is a piecewise $W^1_2$ function, the reduction to the potential form is possible, but the potential is a distribution from $W_2^{-1}(0,a)$ and eigenfunctions have a discontinuous point $d$. The  mapping $\mathcal{M}$ cannot be extended directly to the case when $\rho$ is a piecewise $W^1_2$ function. In this case, we can reformulate
the information of $(\rho'(b), q(x))$  as  $\sigma(x)$, where  $\sigma\in L^2(0,a)$ is the anti-derivative function of $q$. The mapping from $\rho$ to $(\rho(b),  \sigma(x),d, d_1)$ is injective (see Section 2 for  definitions of $d, d_1$). {By Liouville transformation,  the Barcilon formula refined by Shen \cite{shen} with $\rho$ to be a piecewise {continuously differentiable} function is a consequence of certain asymptotic formula.}
In this paper, by studying the discontinuous SL problem with  singular potentials, namely,
recovering $(\sigma,d,d_1)$  instead of $q$ in classical SL theory, we  obtain  uniqueness theorems even dropping the information of  $\rho'(b)$ (see Theorem \ref{theorem1}).
See \cite{alb, bondarenko,hr,hry} for some results on the singular SL problem without discontinuities.

The structure of this paper is as follows. In Section 2, we use ~Liouville transformation to transform \eqref{string} into the  discontinuous SL problem with  singular potentials. Section 3 gives the integral representation of the initial solution. Section 4 introduces the Weyl-Titchmarsh function of the discontinuous~SL problem with singular potentials and proves the corresponding uniqueness theorem. We also give the high-energy asymptotic behavior of the Weyl-Titchmarsh function. In Section 5, we use the  discontinuous  SL problem with  singular potentials to study inverse transmission eigenvalue problems by all eigenvalues. Section 6 studies properties of ``almost real subspectrum'' $\{\mu_m\}_{m=1}^\infty$, which are real except for finite many eigenvalues. We  show that the ``almost real subspectrum'' $\{\mu_m\}_{m=1}^\infty$ and some information on the refractive index uniquely determine $\rho$.

\section{Liouville transformation}

Let $u(r,\lambda)$ be the solution of $-u''=\lambda \rho u$ satisfying the initial condition $u(0,\lambda)=0, u'(0,\lambda)=1.$  It is known \cite%
{act} that special transmission eigenvalues of \eqref{string} coincide with the
zeros of its characteristic function
\begin{equation}
D (\lambda):=\left\vert
\begin{array}{cc}
\frac{\sin (\sqrt{\lambda}b )}{\sqrt{\lambda} } & u(b,\lambda) \\
\cos (\sqrt{\lambda}b ) & u^{\prime }(b,{\lambda})
\end{array}%
\right\vert .  \label{eq1.4}
\end{equation}%
Let $\{\lambda _{k}\}_{k=1}^{\infty }$ denote the eigenvalues of \eqref{string}
with account of multiplicity. Then according to Hadamard's factorization
theorem, we have
\begin{equation}
D (\lambda)=\gamma \lambda ^{s}\prod_{\lambda _{k}\neq 0}\left(1-\frac{\lambda }{\lambda
	_{k}}\right),  \label{eq1.5}
\end{equation}%
where   $s\geq 1$ is the multiplicity of the eigenvalue $\lambda =0$, $
\gamma \in \mathbb{R}$.

By Liouville transformation
\begin{align}\label{liouville}
x=\int_0^r\sqrt{\rho(t) }dt,
\end{align}
we can transform $-u''=\lambda \rho u$ into  the  discontinuous SL problem with  singular potentials.

\begin{lemma}
Assume that {$\rho\in W_2^1\left((0, b_1)\cup (b_1, b)\right)$ and satisfies} \eqref{piecewiseac}. Then
\begin{align}\label{zxlambda}
z(x,\lambda):=(\rho(r))^{1/4}u(r,\lambda)
\end{align}
 satisfies the equation
\begin{align}
-&\frac{d z^{[1]}(x)}{dx}-
\sigma(x) z'(x)=\lambda z(x), \quad x\in(0,d)\cup (d,a),    \label{Aaa} \\
&z(d+)=d_1z(d-), \ z^{[1]}(d+)=d_1^{-1}z^{[1]}(d-),  \label{jump}
\end{align}
where $a$ is defined by \eqref{definitiona}, $\sigma\in L^2(0,a)$, $z^{[1]}(x)=z'(x)-\sigma(x) z(x),$
\begin{align}
d=\int_0^{b_1} \sqrt{\rho(t)} dt, \ d_1={b_2}^{1/4},  \label{definitione1}
\end{align}
\begin{align}
\sigma(x)=\frac{1}{4}\frac{\rho'(r)}{(\rho(r))^{3/2}}+g(x). \label{definitionsigma}
\end{align}
 Here $g(x)$ satisfies
\begin{align}\label{gprime}
g'(x)=\frac{1}{16}\frac{(\rho'(r))^2}{(\rho(r))^3}, \quad x\in (0,d)\cup (d,a),
\end{align}
and the jump condition
\begin{align}\label{gprime1}
g(d-)={b_2}^{1/2}g(d+).
\end{align}
\end{lemma}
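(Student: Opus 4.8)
The plan is to carry out the Liouville change of variables $x=\int_0^r\sqrt{\rho(t)}\,dt$ explicitly, taking care that under the piecewise-$W_2^1$ hypothesis the classical Liouville potential $q=\tfrac14\rho''/\rho^2-\tfrac{5}{16}(\rho')^2/\rho^3$ exists only as a distribution in $W_2^{-1}$, since $\rho''$ is not a function. Rather than work with $q$, I would work throughout with the quasi-derivative $z^{[1]}=z'-\sigma z$ and verify directly that $z=\rho^{1/4}u$, regarded as a function of $x$, and $z^{[1]}$ are absolutely continuous on each of $(0,d)$ and $(d,a)$ and solve \eqref{Aaa}. The only regularity of $u$ I need is that $u''=-\lambda\rho u\in L^\infty$, whence $u\in C^1$ across the discontinuity $r=b_1$, so that $u(b_1-)=u(b_1+)$ and $u'(b_1-)=u'(b_1+)$.

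First I would record the chain rule $\frac{d}{dx}=\rho^{-1/2}\frac{d}{dr}$ and compute, on each smooth piece, \[\dot z=\tfrac14\rho^{-5/4}\rho'u+\rho^{-1/4}u'.\] With $\sigma$ as in \eqref{definitionsigma}, the summand $\tfrac14\rho'/\rho^{3/2}$ of $\sigma$ is tailored to cancel the $\rho'u$ contribution in $\dot z$, leaving \[z^{[1]}=\dot z-\sigma z=\rho^{-1/4}u'-g\,\rho^{1/4}u,\] which involves $u$ and $u'$ alone. Consequently one further differentiation in $x$ brings in $u''$ exactly once --- immediately replaced by $-\lambda\rho u$ --- while all remaining coefficients involve only $\rho$, $\rho'$ and $g'=\tfrac1{16}(\rho')^2/\rho^3$, so that $\rho''$ never reappears. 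Forming $-\frac{d}{dx}z^{[1]}-\sigma\dot z$ and collecting terms, I expect everything except $\lambda\rho^{1/4}u=\lambda z$ to cancel in pairs; this is a routine, if bookkeeping-heavy, computation that yields \eqref{Aaa}. The membership $\sigma\in L^2(0,a)$ then drops out: $\rho'/\rho^{3/2}\in L^2$ because $\rho'\in L^2$ and $\rho$ is bounded below on each piece, and $g$ is absolutely continuous, hence bounded, because $(\rho')^2/\rho^3\in L^1$.

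For the interface conditions I would invoke the continuity of $u$ and $u'$ at $b_1$. From $z(d\pm)=\rho(b_1\pm)^{1/4}u(b_1)$ and $\rho(b_1+)=b_2\rho(b_1-)$ one obtains $z(d+)=b_2^{1/4}z(d-)=d_1z(d-)$, the first condition in \eqref{jump}. Using $z^{[1]}=\rho^{-1/4}u'-g\rho^{1/4}u$ one computes \[z^{[1]}(d\pm)=\rho(b_1\pm)^{-1/4}u'(b_1)-g(d\pm)\,\rho(b_1\pm)^{1/4}u(b_1),\] and a direct comparison with $d_1^{-1}z^{[1]}(d-)$ shows that the second condition in \eqref{jump} holds precisely when $g(d-)=b_2^{1/2}g(d+)$. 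Thus the jump law \eqref{gprime1} on $g$ is not an additional hypothesis but exactly the normalization forced by requiring the transparent transmission condition $z^{[1]}(d+)=d_1^{-1}z^{[1]}(d-)$; together with \eqref{gprime} it determines $g$ up to a constant of integration.

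I expect the principal obstacle to be conceptual rather than computational. The jump of $\rho$ at $b_1$ injects a Dirac-type singularity into the distributional potential, and this part cannot be absorbed into an $L^2$ function $\sigma$; it must instead be transferred into the interface conditions at $d$. The two delicate points are therefore (i) recognizing that it is the quasi-derivative $z^{[1]}$, and not the ordinary derivative $z'$, that obeys the clean jump law at $d$, and (ii) fixing the constants of integration of $g$ on the two sides through \eqref{gprime1} so that this law is exact. Once the splitting $\sigma=\tfrac14\rho'/\rho^{3/2}+g$ is adopted, what remains is the term-by-term cancellation indicated above.
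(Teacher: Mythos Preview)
Your proposal is correct and follows essentially the same route as the paper: compute $z'$, form $z^{[1]}=\rho^{-1/4}u'-g\rho^{1/4}u$ via the choice of $\sigma$, differentiate once more using $u''=-\lambda\rho u$ and the defining equation for $g'$, and then read off the interface conditions from the continuity of $u,u'$ at $b_1$, observing that $z^{[1]}(d+)=d_1^{-1}z^{[1]}(d-)$ holds precisely when \eqref{gprime1} is imposed. The only additions you make beyond the paper are the brief justification that $\sigma\in L^2(0,a)$ and the conceptual remarks on why the quasi-derivative, rather than $z'$, carries the clean jump law.
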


\begin{proof} We first show that $z(x)$ satisfies \eqref{Aaa}.  For $x\in(0,d)\cup (d,a)$, by \eqref{zxlambda}, we know
	\begin{align}
	z'(x)=\frac{1}{4}(\rho(r))^{-5/4}\rho'(r)u(r)
	+(\rho(r))^{-1/4}u'(r).  \nonumber
	\end{align}
	 Hence
	\begin{align}
	z'(x)-\sigma(x) z(x)=(\rho(r))^{-1/4}u'(r)-g(x)(\rho(r))^{1/4}u(r), \label{zprime}
	\end{align}
\begin{align}
\sigma(x) z'(x)=&\frac{1}{16}(\rho'(r))^2(\rho(r))^{-11/4}u(r)
+\frac{1}{4}(\rho(r))^{-7/4}\rho'(r)u'(r)  \nonumber \\
&+\frac{1}{4}g(x)(\rho(r))^{-5/4}\rho'(r)u(r)+g(x)(\rho(r))^{-1/4}u'(r). \label{ac}
\end{align}	
Differentiating ~\eqref{zprime} with respect to~$x$, we obtain that
\begin{align}
-\frac{d z^{[1]}(x)}{dx}=&\frac{1}{4}(\rho(r))^{-7/4}\rho'(r)u'(r)-(\rho(r))^{-3/4}u''(r)+\frac{1}{16}(\rho'(r))^2(\rho(r))^{-11/4}u(r)  \nonumber\\
&+g(x)\left(\frac{1}{4}(\rho(r))^{-5/4}\rho'(r)u(r)
+(\rho(r))^{-1/4}u'(r)\right).      \label{z11111111}
\end{align}	
Subtracting ~\eqref{ac} from ~\eqref{z11111111}, by ~\eqref{zxlambda}, one has that
\begin{align}\nonumber
-\frac{d z^{[1]}(x)}{dx}-
\sigma(x) z'(x)=-(\rho(r))^{-\frac{3}{4}}u''(r)=\lambda z(x).
\end{align}
Therefore, we conclude that ~\eqref{Aaa} holds.

According to ~\eqref{piecewiseac},  \eqref{zxlambda} and ~\eqref{zprime}, $z(x,\lambda)$ satisfies the following jump condition
\begin{align}
z(d+)=d_1z(d-), \ z^{[1]}(d+)=d_1^{-1}z^{[1]}(d-)+d_2z(d-),  \label{jump1}
\end{align}
where
\begin{align}
d_1={b_2}^{1/4},
d_2= g(d-) {b_2}^{-{1}/{4}}-g(d+){b_2}^{{1}/{4}}.  \label{definitione2}
\end{align}
Assume that ~$d_2=0$.    By \eqref{definitione2}, ~$g(x)$  satisfies the jump condition~\eqref{gprime1}. The lemma is proved.
	\end{proof}

\begin{remark}
	Denote $q=\sigma', q\in W^{-1}_2(0,a).$ Then \eqref{Aaa} can be recast  in the form of SL equation $-z''+q(x)z=\lambda z$  in the distribution sense. Hence we call \eqref{Aaa}
	the SL equation with singular potentials.  We mention that Albeverio-Hryniv-Mykytyuk \cite{alb} showed that some SL operators in impedance form are unitarily equivalent to SL operators with singular potentials.
\end{remark}

By Liouville transformation \eqref{zxlambda}, we can transform \eqref{string} into discontinuous SL equation  with jump condition \eqref{jump}. Also
the characteristic function $D(\lambda)$ is transformed into
\begin{equation} \label{d}
D (\lambda)=\rho(b)^{1/4}\left\vert
\begin{array}{cc}
\frac{\sin (\sqrt{\lambda}b )}{\sqrt{\lambda}}&\beta z(a,\lambda) \\
\cos (\sqrt{\lambda}b )& z^{[1]}(a, \lambda)+g(a)z(a,\lambda) %
\end{array}%
\right\vert,
\beta=\frac{1}{\rho(b)^{1/2}}.
\end{equation}
Since $g(a)$ is an arbitrary real number, then we transform problem ~$Q(\rho)$ into a family of discontinuous ~SL problems with singular potentials.
In order to ensure the uniqueness of the image of ~Liouville transformation, we choose ~$g(a)=0$.
Therefore, we can transform problem ~$Q(\rho)$ into the problem
\begin{equation}\label{eq215}
\left\{
\begin{split}
& -\frac{d z^{[1]}(x)}{dx}-\sigma z'(x)=\lambda z(x), \quad x\in(0,d)\cup (d,a),     \\
&z(d+)=d_1z(d-), \ z^{[1]}(d+)=d_1^{-1}z^{[1]}(d-), \\
&z(0)=D(\lambda)=0,
\end{split}%
\right.
\end{equation}
where
\begin{equation}
D (\lambda)=\rho(b)^{1/4}\left\vert
\begin{array}{cc}
\frac{\sin (\sqrt{\lambda} b)}{\sqrt{\lambda}}&\beta z(a,\lambda) \\
\cos (\sqrt{\lambda}b )& z^{[1]}(a, \lambda)
\end{array}%
\right\vert.
\label{eq1.4=}
\end{equation}%

The following lemma shows that under the conditions that ~$\rho(b)$ is known and ~$g(a)=0$, the Liouville transformation is injective.

\begin{lemma} \label{unique}
	Assume that {$\rho\in W_2^1\left((0, b_1)\cup (b_1, b)\right)$ and satisfies} \eqref{piecewiseac},
	$\sigma(x)$, $d$, $d_1$ are defined by  \eqref{definitione1} and
	\eqref{definitionsigma}
	with $g(a)=0$. Then $\rho$ is uniquely determined by $d$, $d_1$, $\rho(b)$ and $\sigma(x),x\in[0,a]$.
\end{lemma}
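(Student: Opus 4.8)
The plan is to give a constructive proof: from the data $(d,d_1,\rho(b),\sigma)$ I will reconstruct $\rho$ in finitely many uniquely determined steps. Write $h(x):=\rho(r(x))$, where $r(x)$ is the inverse of the Liouville map \eqref{liouville} (it is strictly increasing, hence invertible, because $\rho>0$). Differentiating $h$ and using $dr/dx=\rho(r)^{-1/2}$ together with \eqref{definitionsigma} gives the first key identity
\[
\frac{d}{dx}\ln h(x)=\frac{\rho'(r)}{\rho(r)^{3/2}}=4\bigl(\sigma(x)-g(x)\bigr),\qquad x\in(0,d)\cup(d,a),
\]
while \eqref{gprime} yields the second,
\[
g'(x)=\frac{1}{16}\left(\frac{\rho'(r)}{\rho(r)^{3/2}}\right)^{2}=\bigl(\sigma(x)-g(x)\bigr)^{2},\qquad x\in(0,d)\cup(d,a).
\]
Thus $g$ solves a Riccati equation driven by the known function $\sigma$, and $\ln h$ is then recovered from $g$ by a single integration; the role of $d$ in the data is to locate the point at which the jump conditions must be imposed.

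First I would determine $g$. On $(d,a)$ the Riccati equation $g'=(\sigma-g)^2$ is supplemented by the terminal value $g(a)=0$ (the chosen normalization), which fixes $g$ on $(d,a]$ and in particular the one-sided value $g(d+)$. The jump relation \eqref{gprime1}, rewritten as $g(d-)=d_1^{2}g(d+)$ since $b_2^{1/2}=d_1^{2}$, then supplies the terminal value for the same Riccati equation on $(0,d)$, which determines $g$ on $[0,d)$. Hence $g$ is uniquely fixed on all of $[0,a]$ by $\sigma$, $d$ and $d_1$.

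With $g$ in hand, integrating the first identity backward from $x=a$ and using the known value $h(a)=\rho(b)$ recovers $h$ on $(d,a]$ and the value $h(d+)$; the discontinuity condition \eqref{piecewiseac}, namely $h(d+)=b_2h(d-)=d_1^{4}h(d-)$, provides $h(d-)$, and a further integration on $(0,d)$ gives $h$ on $[0,d)$. Since $h>0$, I can then invert the Liouville map via $r(x)=\int_0^x h(t)^{-1/2}\,dt$ (a strictly increasing map, and a change of variables back to $r$ confirms $r(a)=b$), and set $\rho(r)=h(x(r))$. This exhibits $\rho$ as a function of the data alone, so any two admissible refractive indices producing the same $(d,d_1,\rho(b),\sigma)$ coincide, which is the asserted uniqueness.

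The main obstacle is justifying that the backward Riccati problem has a \emph{unique} solution in the low-regularity setting $\sigma\in L^2(0,a)$: the right-hand side $(\sigma(x)-g)^2$ is only $L^1$ in $x$, so one must argue within the Carath\'eodory framework rather than via classical Picard--Lindel\"of. The point is that $|(\sigma-g_1)^2-(\sigma-g_2)^2|=|g_1-g_2|\,|g_1+g_2-2\sigma|$, so on any band where the solutions remain bounded by $M$ the local Lipschitz constant $m(x)=2\bigl(M+|\sigma(x)|\bigr)$ lies in $L^1(0,a)$, and Carath\'eodory uniqueness applies. Existence is automatic here, since the $g$ arising from a genuine $\rho$ already provides a bounded solution of each initial value problem, and its boundedness makes the Lipschitz estimate effective; the remaining integration steps are then routine.
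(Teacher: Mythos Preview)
Your proof is correct and follows essentially the same route as the paper: derive the pair of ODEs $g'=(\sigma-g)^2$ and $d(\ln h)/dx=4(\sigma-g)$ (the paper writes the second equivalently for $h^{1/4}$), solve backward from $x=a$ with the terminal data $g(a)=0$ and $h(a)=\rho(b)$, use the jump relations at $x=d$ to continue onto $(0,d)$, and then invert the Liouville map to recover $\rho(r)$. Your extra paragraph on Carath\'eodory uniqueness under $\sigma\in L^2$ is a welcome refinement that the paper leaves implicit.
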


\begin{proof}
	Denote $\hat{\rho}(x):=\rho(r)$. By \eqref{definitionsigma} and \eqref{gprime},  $(\hat{\rho}(x))^{1/4}$ and $ g(x)$ satisfy  ordinary differential equations
	\begin{align}\label{fangchengzu}
	\frac{d (\hat{\rho}(x))^{1/4}}{dx}&=(\hat{\rho}(x))^{1/4}(\sigma(x)-g(x)),\\
	\frac{d g(x)}{dx}&=(\sigma(x)-g(x))^2, \label{fangchengzu1}
	\end{align}
	and the following initial conditions
	\begin{align}\nonumber
	(\hat{\rho}(a))^{1/4}=\rho(b)^{1/4},
	g(a)=0.
	\end{align}
By the uniqueness theorem for ordinary differential equations,  we can uniquely determine ~$(\hat{\rho}(x))^{1/4}$, $g(x)$, $x\in (d,a)$ 	if ~$\rho(b)$ is known.
	Since ~$d$, $d_1$ are known, by  \eqref{piecewiseac}, \eqref{definitione1} and \eqref{gprime1}, we know that ~$b_2$, $(\hat{\rho}(d-))^{1/4}$ and $g(d-)$ are uniquely determined. Note that ~$(\hat{\rho}(x))^{1/4}$ and $g(x)$ also satisfy the ordinary differential equations ~\eqref{fangchengzu} and \eqref{fangchengzu1} on the interval ~$(0,d)$.
Therefore we can uniquely determine $(\hat{\rho}(x))^{1/4}$, $g(x)$, $x\in (0,d)$.
	Since~$x=\int_0^r\sqrt{\rho(s) }ds$, then $r(x)$ satisfies
\begin{align}\nonumber
\frac{dr}{dx}=\frac{1}{\sqrt{\hat{\rho}(x)}}
\end{align}
and $$r(0)=0.$$ By the uniqueness theorem for the differential equation, $\hat{\rho}(x)$ uniquely determines  $r(x)$ and hence ~$\rho(r)$ is uniquely determined.	
\end{proof}

\section{ Discontinuous Sturm-Liouville operator with singular potentials}
In this section, we consider the  SL equation \eqref{Aaa} with discontinuous condition  \eqref{jump}, denote it by $L(\sigma,d,d_1)$. Here $\sigma\in L^2(0,a), 0<d<a, d_1\ne 1>0.$

Let $s(x,\lambda) $ be the solution of \eqref{Aaa} satisfying the initial condition $s(0,\lambda) =0$, $ s^{[1]}(0,\lambda)=1$ and the discontinuous condition  \eqref{jump}.
By \eqref{Aaa}, for $0\le x<d$,  $s$ and $s^{[1]} $ satisfy the following equations (see also \cite{bondarenko})
\begin{align}
s&(x,\lambda)=\frac{\sin\sqrt{\lambda}x}{\sqrt{\lambda}}-
\int_0^x\frac{\sin\sqrt{\lambda} (x-t)}{\sqrt{\lambda}}\sigma(t)s^{[1]}(t,\lambda) dt  \nonumber \\
&+\int_0^x{\cos\sqrt{\lambda} (x-t)}\sigma(t)s(t,\lambda) dt
-\int_0^x\frac{\sin\sqrt{\lambda} (x-t)}{\sqrt{\lambda}}\sigma(t)^2s(t,\lambda) dt, \label{integrationz}
\end{align}	
\begin{align}
s&^{[1]}(x,\lambda)=\cos \sqrt{\lambda}x- \int_0^x\cos\sqrt{\lambda} (x-t)\sigma(t)s^{[1]}(t,\lambda) dt \nonumber\\
&-\sqrt{\lambda}\int_0^x{\sin\sqrt{\lambda} (x-t)}\sigma(t)s(t,\lambda) dt
-\int_0^x{\cos\sqrt{\lambda} (x-t)}\sigma(t)^2s(t,\lambda) dt. \label{integrantionz1aa}
\end{align}
We next show the equations that $s$ and $s^{[1]} $ satisfy for $d<x\le a$. Notice that for $d<x\le a$,  there exist $A,B\in \mathbb{R}$, so that
\begin{align}
s&(x,\lambda)=A\frac{\sin\sqrt{\lambda}(x-d)}{\sqrt{\lambda}}+B\cos \sqrt{\lambda}(x-d)
-\int_d^x\frac{\sin\sqrt{\lambda} (x-t)}{\sqrt{\lambda}}\sigma(t)s^{[1]}(t,\lambda) dt  \nonumber \\
&+\int_d^x{\cos\sqrt{\lambda} (x-t)}\sigma(t)s(t,\lambda) dt
-\int_d^x\frac{\sin\sqrt{\lambda} (x-t)}{\sqrt{\lambda}}\sigma(t)^2s(t,\lambda) dt, \label{integrationz11}
\end{align}	
\begin{align}
s&^{[1]}(x,\lambda)=A\cos \sqrt{\lambda}(x-d)-B\sqrt{\lambda}{\sin\sqrt{\lambda}(x-d)}-
\int_d^x\cos\sqrt{\lambda} (x-t)\sigma(t)s^{[1]}(t,\lambda) dt \nonumber\\
&-\sqrt{\lambda}\int_d^x{\sin\sqrt{\lambda} (x-t)}\sigma(t)s(t,\lambda) dt
-\int_d^x{\cos\sqrt{\lambda} (x-t)}\sigma(t)^2s(t,\lambda) dt. \label{integrantionz12}
\end{align}
By~\eqref{integrationz11}-\eqref{integrantionz12}, one has that ~$A=s^{[1]}(d+,\lambda), B=s(d+,\lambda)$.
On the other hand, From~\eqref{integrationz}, \eqref{integrantionz1aa} and the jump condition ~\eqref{jump}, we know that
\begin{align}
s&(d+,\lambda)=d_1\Big(\frac{\sin\sqrt{\lambda}d}{\sqrt{\lambda}}-\int_0^d
\frac{\sin\sqrt{\lambda} (d-t)}{\sqrt{\lambda}}\sigma(t)s^{[1]}(t,\lambda) dt  \nonumber \\
&+\int_0^d{\cos\sqrt{\lambda} (d-t)}\sigma(t)s(t,\lambda) dt
-\int_0^d\frac{\sin\sqrt{\lambda} (d-t)}{\sqrt{\lambda}}\sigma(t)^2s(t,\lambda) dt\Big), \label{integrationz1111}
\end{align}	
\begin{align}
s&^{[1]}(d+,\lambda)=d_1^{-1}\Big(\cos \sqrt{\lambda}d-
\int_0^d\cos\sqrt{\lambda} (x-t)\sigma(t)s^{[1]}(t,\lambda) dt \nonumber\\
&-\sqrt{\lambda}\int_0^d{\sin\sqrt{\lambda} (d-t)}\sigma(t)s(t,\lambda) dt
-\int_0^d{\cos\sqrt{\lambda} (d-t)}\sigma(t)^2s(t,\lambda)dt \Big).
\label{integrantionz111}
\end{align}
Therefore, for $d<x\le a$, $s$ and $s^{[1]}$ satisfy the following equations
\begin{align}
s&(x,\lambda)=\frac{1}{\sqrt{\lambda}}\left(d_1 \sin \sqrt{\lambda} d \cos \sqrt{\lambda} (x-d)+d_1^{-1} \cos \sqrt{\lambda} d \sin \sqrt{\lambda} (x-d)\right)\nonumber\\
&-\frac{1}{\sqrt{\lambda}}
\int_{0}^d \left(d_1 \sin \sqrt{\lambda}( d-t) \cos \sqrt{\lambda} (x-d)+d_1^{-1} \cos \sqrt{\lambda} (d-t) \sin \sqrt{\lambda}
(x-d)\right)\sigma(t)s^{[1]}(t)dt \nonumber \\
&+\int_{0}^d \left(d_1 \cos \sqrt{\lambda} (d-t) \cos \sqrt{\lambda} (x-d)-d_1^{-1} \sin \sqrt{\lambda}(d-t)
\sin \sqrt{\lambda} (x-d) \right)\sigma(t)s(t)dt \nonumber\\
&-\frac{1}{\sqrt{\lambda}}\int_{0}^d \left(d_1 \sin \sqrt{\lambda}( d-t) \cos \sqrt{\lambda} (x-d)+d_1^{-1} \cos
\sqrt{\lambda} (d-t) \sin \sqrt{\lambda} (x-d)\right)\sigma^2(t)s(t)dt \nonumber \\
&-\int_d^x\frac{\sin\sqrt{\lambda}(x-t)}{\sqrt{\lambda}}\sigma(t)s^{[1]}(t,\lambda) dt+\int_d^x{\cos\sqrt{\lambda} (x-t)}\sigma(t)z(t,\lambda) dt \nonumber \\
&-\int_d^x\frac{\sin\sqrt{\lambda} (x-t)}
{\sqrt{\lambda}}\sigma(t)^2s(t,\lambda) dt,  \label{integrationz1}
\end{align}	
\begin{align}
s&^{[1]}(x,\lambda)=\left(-d_1 \sin \sqrt{\lambda} d \sin \sqrt{\lambda} (x-d)
+d_1^{-1} \cos \sqrt{\lambda} d \cos \sqrt{\lambda} (x-d) \right)\nonumber\\
&+\int_{0}^d \left(d_1 \sin \sqrt{\lambda}( d-t) \sin \sqrt{\lambda} (x-d)-d_1^{-1} \cos \sqrt{\lambda}
(d-t) \cos \sqrt{\lambda} (x-d)\right)\sigma(t)s^{[1]}(t)dt \nonumber \\
&-\sqrt{\lambda}\int_{0}^d \left(d_1 \cos \sqrt{\lambda} (d-t)
\sin \sqrt{\lambda} (x-d)+d_1^{-1} \sin \sqrt{\lambda} (d-t) \cos \sqrt{\lambda} (x-d)\right)\sigma(t)s(t)dt \nonumber\\
&+\int_{0}^d \left(d_1 \sin \sqrt{\lambda}( d-t) \sin \sqrt{\lambda} (x-d)-d_1^{-1}
\cos \sqrt{\lambda} (d-t) \cos \sqrt{\lambda} (x-d)\right)\sigma^2(t)s(t)dt \nonumber \\
&-\int_d^x{\cos\sqrt{\lambda}(x-t)}\sigma(t)s^{[1]}(t,\lambda) dt -\sqrt{\lambda}\int_d^x{\sin\sqrt{\lambda} (x-t)}\sigma(t)s(t,\lambda) dt \nonumber \\
&-\int_d^x\cos\sqrt{\lambda}
(x-t)\sigma(t)^2s(t,\lambda) dt. \label{integrationz2}
\end{align}	

 Denote $Y(x,\lambda)=(z(x,\lambda), z^{[1]}(x,\lambda))^T$. For $ 0\le x<d$,  \eqref{integrationz} and \eqref{integrantionz1aa} can be written in the matrix form
\begin{align}\label{11111}
Y(x,\lambda)=Y_0(x,\lambda) +\int_0^x A(t,\lambda)Y(t) dt,\ 0\le x<d,
\end{align}
where
\begin{align}
Y_0(x,\lambda)=\begin{pmatrix}
\frac{\sin\sqrt{\lambda}x}{\sqrt{\lambda}}  \\
\cos \sqrt{\lambda}x
\end{pmatrix}, \nonumber
\end{align}
\begin{align}
A(t,\lambda)=\begin{pmatrix}
\cos\sqrt{\lambda} (x-t)\sigma(t)-\frac{\sin\sqrt{\lambda} (x-t)}{\sqrt{\lambda}}\sigma(t)^2 & -\frac{\sin\sqrt{\lambda} (x-t)}{\sqrt{\lambda}}\sigma(t) \\
-\sqrt{\lambda}{\sin\sqrt{\lambda} (x-t)}\sigma(t)-{\cos\sqrt{\lambda} (x-t)}\sigma(t)^2&-\cos\sqrt{\lambda} (x-t)\sigma(t)
\end{pmatrix}.  \nonumber
\end{align}

 Equation \eqref{11111} can be solved by the method of successive approximations; namely, with
\begin{align}\label{y0c}
Y_n(x,\lambda)\equiv
\begin{pmatrix}
Y_{1,n}(x,\lambda)  \\
Y_{2,n}(x,\lambda)
\end{pmatrix}
=\int_0^x A(t,\lambda)Y_{n-1}(t,\lambda) dt,
\end{align}
then at least formally, we have
\begin{align}
Y(x,\lambda)=\sum_{n=0}^\infty Y_n(x,\lambda).  \label {y}
\end{align}
 For $d<x\le a$, \eqref{integrationz1} and \eqref{integrationz2} can be written in the matrix form
\begin{align}\label{yn11}
Y(x,\lambda)=Y_0(x,\lambda) +\int_d^x A(t,\lambda)Y(t) dt,\ d<x\le a
\end{align}
with
\begin{align}
Y_0(x,\lambda)=\begin{pmatrix}
Y_{0,1}(x,\lambda)  \\
Y_{0,2}(x,\lambda)
\end{pmatrix}. \nonumber
\end{align}
Here $Y_{0,1}(x,\lambda)$ is the sum of the first four terms of \eqref{integrationz1},  $Y_{0,2}(x,\lambda)$ is the sum of the first four terms of \eqref{integrationz2}.
By the  method of successive approximations,  at least formally, $Y(x,\lambda)$ has the representation \eqref{y}.
Here
\begin{align}
Y_n(x,\lambda)\equiv
\begin{pmatrix}
Y_{n,1}(x,\lambda)  \\
Y_{n,2}(x,\lambda)
\end{pmatrix} \nonumber
=\int_d^x A(t,\lambda)Y_{n-1}(t,\lambda) dt.
\end{align}

	 Set
	\begin{align}\nonumber
		\varphi(x,\lambda)=
	\begin{cases}
	\frac{\sin\sqrt{\lambda }x}{\sqrt{\lambda }},  &0\le x<d, \\
	\frac{1}{\sqrt{\lambda}}\left(d_1 \sin \sqrt{\lambda} d \cos \sqrt{\lambda} (x-d)+d_1^{-1} \cos \sqrt{\lambda} d \sin \sqrt{\lambda} (x-d)\right), &d< x\le a.
	\end{cases}
	\end{align}
	and
		\begin{align}\nonumber
		\phi(x,\lambda)=
	\begin{cases}
 \cos \sqrt{\lambda }x,  &0\le x<d, \\
	-d_1 \sin \sqrt{\lambda} d \sin \sqrt{\lambda} (x-d)+d_1^{-1} \cos \sqrt{\lambda} d \cos \sqrt{\lambda} (x-d),&d< x\le a.
	\end{cases}
	\end{align}
Then we have the following lemma.	
	
\begin{lemma} \label{lemma22}
For all  $\lambda\in\mathbb{C}$ and $x\ne d$,  there exist $K(x, \cdot), N(x, \cdot) \in L^2(0,x)$, so that
\begin{align}\label{definitionkxt}
s(x,\lambda)=\varphi(x,\lambda)+\int_0^{x} K(x,t)\frac{\sin\sqrt{\lambda }t}
{\sqrt{\lambda }} dt,
\end{align}
\begin{align} \label{definitionnxt}
s^{[1]}(x,\lambda)=\phi(x,\lambda)+\int_0^{x} N(x,t)\cos \sqrt{\lambda}t dt.
\end{align}

\end{lemma}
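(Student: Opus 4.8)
The plan is to construct the $\lambda$-independent kernels $K(x,\cdot)$ and $N(x,\cdot)$ directly from the successive-approximation series $Y=\sum_{n\ge0}Y_n$ of \eqref{y}, showing by induction on $n$ that every term with $n\ge1$ already carries the required integral form and that the associated kernels sum in $L^2$. The term $n=0$ reproduces exactly $(\varphi,\phi)^T$ on each subinterval, so the integral corrections begin at $n=1$; since each $Y_n$ is entire in $\lambda$ and the estimates below are uniform on compact $\lambda$-sets, the resulting identities will hold for all $\lambda\in\mathbb{C}$.

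First I would treat the regular interval $0\le x<d$, where $Y_0=(\tfrac{\sin\sqrt{\lambda}x}{\sqrt{\lambda}},\cos\sqrt{\lambda}x)^T$. Substituting the representation of $Y_{n-1}$ into $Y_n=\int_0^x A(t,\lambda)Y_{n-1}\,dt$ and expanding each entry of $A$ by the product-to-sum identities turns every product of a travelling factor $\cos\sqrt{\lambda}(x-t)$ or $\sin\sqrt{\lambda}(x-t)$ with a carried factor $\tfrac{\sin\sqrt{\lambda}t}{\sqrt{\lambda}}$ or $\cos\sqrt{\lambda}t$ into single harmonics with arguments $\sqrt{\lambda}(x\pm2t)$. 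The substitution $s=|x-2t|$ (and its analogue for the second component) then returns these to the canonical forms $\tfrac{\sin\sqrt{\lambda}s}{\sqrt{\lambda}}$ and $\cos\sqrt{\lambda}s$ with $s\in(0,x)$, the Jacobian contributing a factor $\tfrac12$ and arguments $\sigma(\tfrac{x\pm s}{2})$. This produces explicit recursions expressing $K_n,N_n$ as integral transforms of $K_{n-1},N_{n-1}$ against $\sigma$ and $\sigma^2$.

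The delicate point, which I would verify with care, is that the same expansion also generates non-integral ``boundary'' contributions: a multiple of $\tfrac{\sin\sqrt{\lambda}x}{\sqrt{\lambda}}$ from the first-order $\sigma$ terms and multiples of $\lambda^{-1}$ from the $\sigma^2$ terms. These must cancel, for otherwise they would perturb the coefficient $1$ in front of $\varphi$. At first order the cancellation is explicit: the term $+\tfrac12\tfrac{\sin\sqrt{\lambda}x}{\sqrt{\lambda}}\int_0^x\sigma$ coming from the $(1,1)$-entry acting on $\tfrac{\sin\sqrt{\lambda}t}{\sqrt{\lambda}}$ is annihilated by $-\tfrac12\tfrac{\sin\sqrt{\lambda}x}{\sqrt{\lambda}}\int_0^x\sigma$ from the $(1,2)$-entry acting on $\cos\sqrt{\lambda}t$, while the two $\lambda^{-1}$ pieces in the $\sigma^2$ term cancel upon writing $\tfrac{\cos\sqrt{\lambda}u}{\lambda}=\tfrac1\lambda-\int_0^u\tfrac{\sin\sqrt{\lambda}s}{\sqrt{\lambda}}\,ds$. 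I would show that these diagonal terms telescope at every order, leaving only genuine $L^2$ kernels. Convergence then follows from bounds of the type $\|K_n(x,\cdot)\|_{L^2(0,x)}\le C^n\omega(x)^n/\sqrt{n!}$, with $\omega(x)$ built from $\|\sigma\|_{L^2(0,x)}$ and $\|\sigma\|_{L^1(0,x)}^2$, which give absolute convergence of $\sum_nK_n$ and $\sum_nN_n$ in $L^2(0,x)$.

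It remains to handle $d<x\le a$. There the free term $Y_0$ in \eqref{yn11} equals $(\varphi,\phi)^T$ plus integrals over $[0,d]$ of $\sigma,\sigma^2$ against $s,s^{[1]}$ restricted to $[0,d]$, these arising from the jump conditions \eqref{jump} through \eqref{integrationz1111}--\eqref{integrantionz111}. I would insert the representation already proved on $[0,d]$ into those integrals, apply the same product-to-sum and change-of-variables machinery to cast them as $\int_0^d(\cdots)\tfrac{\sin\sqrt{\lambda}s}{\sqrt{\lambda}}\,ds$ (respectively against $\cos\sqrt{\lambda}s$), and then run the successive approximation on $[d,x]$ exactly as before. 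The one new feature is the reflected argument $2d-x$ present in $\varphi,\phi$; after the substitutions the resulting harmonics still land in $(0,x)$, so the same estimates apply and the total kernels lie in $L^2(0,x)$. The main obstacle throughout is precisely the simultaneous control of these boundary-term cancellations and the $L^2$ convergence of the kernel series in the presence of the $\sigma^2$ contributions and the interface at $d$.
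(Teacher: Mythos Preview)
Your approach is essentially the paper's: successive approximation on each subinterval, with an induction showing that every $Y_n$ with $n\ge1$ carries the integral form \eqref{kn}--\eqref{nn} for some $K_n,N_n\in L^2(0,x)$, and on $(d,a]$ the already-proved representation on $[0,d]$ is fed into the initial term $Y_0$ to produce seed kernels $K_0,N_0$. The paper simply writes out the recursion for $K_{j+1},N_{j+1}$ explicitly (equations \eqref{kn+1}--\eqref{nn+1}) rather than dwelling on the boundary-term cancellations you isolate, but the mechanism and the conclusion are the same.
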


\begin{proof}
We first show that if $0\le x<d$, for any $n\ge1 $, $Y_{1,n}$ and $Y_{2,n}$ have the following representation (see \eqref{y0c} for  definitions of $Y_{1,n}$ and $Y_{2,n}$)
\begin{align}
Y_{n,1}(x,\lambda)&=\int_0^x K_n(x,t)\frac{\sin\sqrt{\lambda}t}{\sqrt{\lambda}} dt,  \label{kn}\\
Y_{n,2}(x,\lambda)&=\int_0^x N_n(x,t)\cos\sqrt{\lambda}tdt. \label{nn}
\end{align}

First we calculate $Y_{1,1},Y_{1,2}$. By trigonometric addition
 formulas, using the change of variables and interchanging the order of integration,  we know that for $n=1$, \eqref{kn} and \eqref{nn} hold with
\begin{align}
K_1(x,t)=&\frac{1}{2}\sigma\left(\frac{x+t}{2}\right)-\frac{1}{2}\sigma\left(\frac{x-t}{2}\right)-\frac{1}{2}\int_0^t \sigma^2(s)ds \nonumber \\
&+\frac{1}{4}\int_t^x\sigma^2\left(\frac{\tau-t}{2}\right)-\sigma^2\left(\frac{\tau+t}{2}\right)d\tau,  \nonumber
\end{align}
\begin{align}
N_1(x,t)=&-\frac{1}{2}\sigma\left(\frac{x+t}{2}\right)-\frac{1}{2}\sigma\left(\frac{x-t}{2}\right)-\frac{1}{2}\int_0^t \sigma^2(s) ds \nonumber \\
&-\int_t^x \sigma^2(s) ds
+\frac{1}{4}\int_t^x\sigma^2\left(\frac{\tau-t}{2}\right)+\sigma^2\left(\frac{\tau+t}{2}\right)d\tau. \nonumber
\end{align}

Assume that for ~$n=j$,  ~\eqref{kn} and ~\eqref{nn} hold.  Letting ~$n=j+1$ in ~\eqref{y0c} and substituting the integral representation of ~$Y_{1,j},Y_{2,j}$ into
~\eqref{y0c}, one can see that  ~\eqref{kn} and \eqref{nn} hold for ~$n=j+1$,
where
\begin{align}
K_{j+1}&(x,t)=-\frac{1}{2}\int_{x-t}^x N_j(s,t-x+s)\sigma(s) ds
-\frac{1}{2}\int_{\frac{x-t}{2}}^{x-t} N_j(s,x-s-t)\sigma(s) ds \nonumber \\
&+\frac{1}{2}\int_{\frac{x+t}{2}}^x N_j(s,x-s+t)\sigma(s) ds-
\frac{1}{2}\int_t^x d\xi\Big(\int_{\xi-t}^\xi K_j(s,t-\xi+s)\sigma^2(s)ds  \nonumber \\
&-\int_{\frac{\xi-t}{2}}^{\xi-t} K_j(s,\xi-s-t)\sigma^2(s)ds+\int_{\frac{\xi+t}{2}}^\xi K_j(s,t-s+\xi)\sigma^2(s) ds\Big)\nonumber \\
&+\frac{1}{2}\int_{x-t}^x K_j(s,t-x+s)\sigma(s) ds-\frac{1}{2}\int_{\frac{x-t}{2}}^{x-t} K_j(s,x-s-t)\sigma(s) ds \nonumber \\
&+\frac{1}{2}\int_{\frac{x+t}{2}}^x K_j(s,x-s+t)\sigma(s) ds, \label{kn+1}
\end{align}
\begin{align}
N_{j+1}&(x,t)=-\frac{1}{2}\int_{x-t}^x N_j(s,t-x+s)\sigma(s) ds
-\frac{1}{2}\int_{\frac{x-t}{2}}^{x-t} N_j(s,x-s-t)\sigma(s) ds \nonumber \\
&-\frac{1}{2}\int_{\frac{x+t}{2}}^x N_j(s,x-s+t)\sigma(s) ds-
\frac{1}{2}\int_t^x d\xi\Big(\int_{\xi-t}^\xi K_j(s,t-\xi+s)\sigma^2(s)ds  \nonumber \\
&-\int_{\frac{\xi-t}{2}}^{\xi-t} K_j(s,\xi-s-t)\sigma^2(s)ds-\int_{\frac{\xi+t}{2}}^\xi K_j(s,\xi-s+t)\sigma^2(s) ds\Big)\nonumber \\
&-\int_t^x \sigma(s)^2 K_j(s,t)ds
+\frac{1}{2}\int_{x-t}^x K_j(s,t-x+s)\sigma(s) ds \nonumber \\
&-\frac{1}{2}\int_{\frac{x-t}{2}}^{x-t} K_j(s,x-s-t)\sigma(s) ds -\frac{1}{2}\int_{\frac{x+t}{2}}^x K_j(s,x-s+t)\sigma(s) ds. \label{nn+1}
\end{align}
By induction,
it follows that the series
\begin{align}
K(x,t):=\sum_{n=1}^{\infty} K_n(x,t), N(x,t):=\sum_{n=1}^{\infty} N_n(x,t) \nonumber
\end{align}
converge in $L^2(0,x)$ and hence $Y(x,\lambda)$ defined by \eqref{y} is indeed a solution of \eqref{11111}.  Moreover, ~$K(x,t)$ with respect to $t$ and the function ~$\sigma$ have the same smoothness.

We next show for ~$d<x\le a$, \eqref{definitionkxt} and ~\eqref{definitionnxt} hold. In this case, the computation is much more complicated.  We only present the main steps.
 By the definition of ~$Y_0(x,\lambda)$, one obtains that there exist ~$K_0(x,\cdot), N_0(x,\cdot)\in L^2(0,x)$, such that
\begin{align}
Y_{0,1}(x,\lambda)&=\frac{1}{\sqrt{\lambda}}\left(d_1 \sin \sqrt{\lambda} d \cos \sqrt{\lambda} (x-d)+d_1^{-1} \cos \sqrt{\lambda} d \sin \sqrt{\lambda} (x-d)\right) \nonumber\\
&+\int_0^x K_0(x,t)\frac{\sin\sqrt{\lambda}t}{\sqrt{\lambda}}dt,  \label{y011}
\end{align}
\begin{align}
Y_{0,2}(x,\lambda)&=\left(-d_1 \sin \sqrt{\lambda} d \sin \sqrt{\lambda} (x-d)+d_1^{-1} \cos \sqrt{\lambda} d \cos \sqrt{\lambda} (x-d)\right) \nonumber \\
&+\int_0^x  N_0(x,t)\frac{\sin\sqrt{\lambda}t}{\sqrt{\lambda}}dt.  \label{y021}
\end{align}

 We prove that for ~$d<x\le a$, \eqref{kn} and ~\eqref{nn} hold.  Denote 	\begin{align}\nonumber
\sigma_-(x) =
\begin{cases}
0, & 0<x<d,\\
\sigma(x),  & d<x\le a.
\end{cases}
\end{align}
Substituting ~\eqref{y011} and ~\eqref{y021} into ~\eqref{yn11} with $n=1$,   we get that  ~\eqref{kn} and ~\eqref{nn} hold for ~$n=1$. Assume that for ~$n=j$,  ~\eqref{kn} and ~\eqref{nn} hold.  Letting ~$n=j+1$ in ~\eqref{yn11} and substituting the integral representation of ~$Y_{1,j},Y_{2,j}$ into
~\eqref{y0c}, we can see that  ~\eqref{kn} and \eqref{nn} hold for ~$n=j+1$.
Here $K_{j+1}(x,t)$ and $N_{j+1}(x,t)$ are given by \eqref{kn+1} and \eqref{nn+1}, respectively, with the function $\sigma$ replaced by  $\sigma_-$.
 By  induction,  we obtain \eqref{kn} and ~\eqref{nn} hold for  ~$n\ge 1$.
Furthermore,  the series
 \begin{align}\nonumber
 K(x,t):=\sum_{n=0}^{\infty} K_n(x,t), N(x,t):=\sum_{n=0}^{\infty} N_n(x,t)
 \end{align}
  converge in $L^2(0,x)$ and hence $Y(x,\lambda)$ defined by \eqref{y} is indeed a solution of \eqref{yn11}. The proof is completed.
\end{proof}
\section{Weyl-Titchmarsh function}
Define the Weyl-Titchmarsh function of \eqref{Aaa} and \eqref{jump} by
\begin{align}\nonumber
m(x, \lambda)=-\frac{s^{[1]}(x-,\lambda)}{s(x-,\lambda)}.
\end{align}
By \eqref{definitionkxt} and \eqref{definitionnxt}, as $|\lambda|\rightarrow \infty$ in the sector
~$\Lambda_{\delta}:=\{\lambda\in \mathbb{C}| \delta<\arg(\lambda)<\pi-\delta, \delta\in (0,\pi/2)\}$,
$m(x,\lambda)$ has the asymptotic formula
\begin{align}
m(x,\lambda)=i\sqrt{\lambda}(1+o(1)),x\in(0,d)\cup (d,a).  \label{aaaaa}
\end{align}
Moreover, $m(x,\lambda)$ obeys the Riccati equation
\begin{align}
m'(x,\lambda)-m^2(x,\lambda)+2\sigma(x)m(x,\lambda)=\sigma^2(x)+\lambda, x\in (0,d)\cup (d,a),  \label{ricatti}
\end{align}
and the jump condition
\begin{align}\nonumber
m(d+,\lambda)=\frac{1}{d_1^{2}}m(d-,\lambda).
\end{align}

For given $y\in [0,a]$, let ${s}(x,\lambda;y),{c}(x,\lambda;y) $  be solutions of \eqref{Aaa} satisfying the  initial conditions
\begin{align}\label{normalized}
s(y,\lambda;y)=c^{[1]}(y,\lambda;y)=0, s^{[1]}(y,\lambda;y)=c(y,\lambda;y)=1
\end{align}
at $y$  and the jump condition ~\eqref{jump}.  Then
\begin{align}\label{langsiji}
W(c(x,\lambda;y),s(x,\lambda;y))\equiv c(x,\lambda;y)s^{[1]}(x,\lambda;y)-c^{[1]}(x,\lambda;y)s(x,\lambda;y)=1.
\end{align}
Obviously, we have  $s(x,\lambda)=s(x,\lambda;0)$ and
\begin{align}\label{linearcom}
s(x,\lambda)=& s^{[1]}(y,\lambda) s(x,\lambda;y)+s(y,\lambda) c(x,\lambda;y),\\
s^{[1]}(x,\lambda)=& s^{[1]}(y,\lambda) s^{[1]}(x,\lambda;y)+s(y,\lambda) c^{[1]}(x,\lambda;y). \label{linearcom1}
\end{align}
 For simplicity, denote $S(x,\lambda)\equiv s(x,\lambda;a),   C(x,\lambda)\equiv c(x,\lambda;a)$.

Define
\begin{align}\label{definitionpsi}
\Psi(x,\lambda)=\frac{s(x,\lambda)}{s(a,\lambda)}.
\end{align}
Then according to \eqref{linearcom},

\begin{align}\label{psi111111}
\Psi(x,\lambda)=C(x,\lambda)-{m(a, \lambda)}S(x,\lambda).
\end{align}
By \eqref{langsiji}, one has
\begin{align}\label{cccc111111111}
W(\Psi, S)=1.
\end{align}

By using the method of spectral mappings \cite{bondarenko,fre}, we obtain the following theorem. 
\begin{theorem}\label{uniqueness}
Assume that $d_1\ne1$. Then  Weyl-Titchmarsh function $m(a,\lambda)$ uniquely determines $ d, d_1$ and $\sigma(x), x\in [0,a]$.
\end{theorem}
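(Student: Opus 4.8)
The plan is to argue by the method of spectral mappings, following the strategy of \cite{bondarenko,fre}. Suppose two problems $L(\sigma,d,d_1)$ and $L(\tilde\sigma,\tilde d,\tilde d_1)$ share the same Weyl--Titchmarsh function, $m(a,\lambda)=\tilde m(a,\lambda)$; the aim is to deduce $d=\tilde d$, $d_1=\tilde d_1$ and $\sigma=\tilde\sigma$ a.e.\ on $[0,a]$. Throughout, objects attached to the second problem are decorated with a tilde, and I use the solutions $S,C,\Psi$ of Section~4 and their analogues $\tilde S,\tilde C,\tilde\Psi$, with arguments $(x,\lambda)$ suppressed.

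The first step is to extract the jump data directly from the high--energy behaviour of $m(a,\lambda)$. Inserting the integral representations of Lemma~\ref{lemma22} for $s(a,\lambda)$ and $s^{[1]}(a,\lambda)$, together with the explicit principal terms $\varphi(a,\lambda)$, $\phi(a,\lambda)$, into $m(a,\lambda)=-s^{[1]}(a-,\lambda)/s(a-,\lambda)$ and expanding as $|\lambda|\to\infty$ in $\Lambda_\delta$, one finds that beyond the leading term $m(a,\lambda)=i\sqrt\lambda(1+o(1))$ of \eqref{aaaaa} there is a correction proportional to $\dfrac{d_1-d_1^{-1}}{d_1+d_1^{-1}}$ carrying a phase of the form $e^{2i\sqrt\lambda(a-d)}$ (equivalently $e^{2i\sqrt\lambda d}$) whose decay exponent is determined by $d$. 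Since $d_1\neq1$ by hypothesis, this amplitude does not vanish, so both the exponent and the amplitude can be read off from $m(a,\lambda)$; as $a$ is fixed, this recovers $d$ and $d_1$. Because $m=\tilde m$, I conclude $d=\tilde d$ and $d_1=\tilde d_1$. This is exactly the place where the assumption $d_1\neq1$ is used.

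With the jump data now matched, define the transfer matrix $P(x,\lambda)=[P_{jk}(x,\lambda)]$ by
\[
\begin{pmatrix}\Psi \\ \Psi^{[1]}\end{pmatrix}
=P\begin{pmatrix}\tilde\Psi \\ \tilde\Psi^{[1]}\end{pmatrix},
\qquad
\begin{pmatrix} S \\ S^{[1]}\end{pmatrix}
=P\begin{pmatrix}\tilde S \\ \tilde S^{[1]}\end{pmatrix},
\]
which is solvable since $W(\tilde\Psi,\tilde S)=1$ by \eqref{cccc111111111}. Substituting the representation $\Psi=C-m(a,\lambda)S$ from \eqref{psi111111} and its tilde--analogue, the $m(a,\lambda)$--terms cancel because $m=\tilde m$, and one obtains the entire expressions $P_{11}=C\tilde S^{[1]}-S\tilde C^{[1]}$, $P_{12}=S\tilde C-C\tilde S$ (and similarly for $P_{21},P_{22}$); thus every $P_{jk}$ is entire in $\lambda$. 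Using Lemma~\ref{lemma22} and the matched jump data one then estimates $P_{11}(x,\lambda)=1+o(1)$ and $P_{12}(x,\lambda)=o(1)$ as $|\lambda|\to\infty$; combined with entirety these yield boundedness on all of $\mathbb{C}$, so $P_{11}\equiv1$ and $P_{12}\equiv0$ by Liouville's theorem. Hence $\Psi\equiv\tilde\Psi$ and $S\equiv\tilde S$, and $\det P=W(\Psi,S)/W(\tilde\Psi,\tilde S)=1$ forces $P_{22}\equiv1$. Comparing the quasi--derivatives of $\Psi$ and $\tilde\Psi$ gives $P_{21}(x,\lambda)=\tilde\sigma(x)-\sigma(x)$, which is independent of $\lambda$; evaluating the entire function $C^{[1]}\tilde S^{[1]}-S^{[1]}\tilde C^{[1]}$ that represents $P_{21}$ as $|\lambda|\to\infty$, where the principal parts cancel because the jump data agree, yields the value $0$. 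Therefore $\sigma=\tilde\sigma$ a.e., which completes the proof.

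The main obstacle is the uniform asymptotic control of the entries $P_{jk}$ over the whole $\lambda$--plane, and in particular near the real axis where $m(a,\cdot)$ carries its poles: one has to verify that the residues of $m$ and $\tilde m$ coincide (not merely their pole locations) so that no spurious singularity survives in $P_{jk}$, and that the kernel estimates of Lemma~\ref{lemma22} hold uniformly across the discontinuity at $x=d$. A second delicate point is the opening step, namely isolating the subleading oscillatory term of $m(a,\lambda)$ with enough precision to read off both $d$ and $d_1$ simultaneously; this is precisely what the hypothesis $d_1\neq1$ makes possible, since it guarantees the jump is visible in the high--energy asymptotics. Carrying the distributional potential through all Wronskian computations via the quasi--derivative $z^{[1]}$ is routine but must be handled consistently.
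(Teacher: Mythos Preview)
Your overall framework (spectral mappings via the transfer matrix $P$) matches the paper's, but two points in your execution diverge from it in ways that either create extra work or leave genuine gaps.

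\textbf{Order of the argument.} You open by trying to read off $d$ and $d_1$ from the sub-leading, exponentially decaying term of $m(a,\lambda)$. This is the hard way round, and your justification is incomplete: the integral remainder $\int_0^a K(a,t)\frac{\sin\sqrt\lambda t}{\sqrt\lambda}\,dt$ in Lemma~\ref{lemma22} contributes terms of the same exponential type as the jump correction, so isolating the amplitude $(d_1-d_1^{-1})/(d_1+d_1^{-1})$ and the exponent $2d-a$ would require much sharper control of $K(a,\cdot)$ than the lemma provides. The paper avoids this entirely: it runs the $P$-matrix argument \emph{without} knowing that the jump data match (the estimates $|P_{11}|\le C$, $P_{12}=o(1)$ in $\Lambda_\delta$ depend only on exponential growth rates in $x$ and $a$, not on $d,d_1$), obtains $S\equiv\tilde S$ and hence $s(x,\lambda)\equiv\tilde s(x,\lambda)$ for all $x$, and then reads off $d=\tilde d$, $d_1=\tilde d_1$ trivially from the leading asymptotics of $s$ at each fixed $x$. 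So your first step is unnecessary and its looseness is the weak link in your proposal.

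\textbf{Missing Phragm\'en--Lindel\"of.} You pass from ``$P_{11}=1+o(1)$, $P_{12}=o(1)$ as $|\lambda|\to\infty$'' directly to Liouville's theorem, but the asymptotics from Lemma~\ref{lemma22} hold only in the sector $\Lambda_\delta$, not on the real axis. The paper inserts the Phragm\'en--Lindel\"of theorem here; you need it too. Your worry about matching residues of $m$ and $\tilde m$ is, by contrast, a non-issue: the hypothesis is $m(a,\lambda)=\tilde m(a,\lambda)$ identically, so the cancellation in \eqref{matrixp} is automatic and the $P_{jk}$ are entire without further verification.

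Finally, your route to $\sigma=\tilde\sigma$ via $P_{21}=\tilde\sigma-\sigma$ is a legitimate variant; the paper instead subtracts the two equations for $s$ and shows $(\sigma-\tilde\sigma)s$ is absolutely continuous with vanishing derivative, which is more elementary and avoids controlling the growth of $P_{21}$.
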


\begin{proof}
	 We require that if a certain symbol $\gamma$ denotes an object related to $L(\sigma, d,d_1)$,
	then the corresponding symbol $\tilde{\gamma}$ denotes the analogous object related to $L(\tilde{\sigma}, \tilde{d},\tilde{d}_1)$.
	
Define the matrix $P(x,\lambda)=[P_{ij}(x,\lambda)]_{j,k=1,2}$ by the formula
\begin{align}\label{definitionp}
P(x,\lambda)\begin{pmatrix}
\tilde{S}(x,\lambda)& \tilde{\Psi}(x,\lambda) \\
\tilde{S}^{[1]}(x,\lambda)&\tilde{\Psi}^{[1]}(x,\lambda)
\end{pmatrix}
=\begin{pmatrix}
{S}(x,\lambda)& {\Psi}(x,\lambda) \\
{S}^{[1]}(x,\lambda)&{\Psi}^{[1]}(x,\lambda)
\end{pmatrix}.
\end{align}
Then from \eqref{cccc111111111},
\begin{align}\label{matrixp}
\begin{pmatrix}
{P_{11}}(x,\lambda)& {P_{12}}(x,\lambda) \\
{P_{21}}(x,\lambda)&P_{22}(x,\lambda)
\end{pmatrix}
=\begin{pmatrix}
-{S}\tilde{\Psi}^{[1]}+\tilde{S}^{[1]}{\Psi}&-\tilde{S}{\Psi}+{S}\tilde{\Psi}  \\
-{S}^{[1]}\tilde{\Psi}^{[1]}+\tilde{S}^{[1]}{\Psi}^{[1]}&-\tilde{S}{\Psi}^{[1]}+{S}^{[1]}\tilde{\Psi}
\end{pmatrix}.
\end{align}
By \eqref{definitionpsi} and Lemma \ref{lemma22},  as $|\lambda|\rightarrow \infty$ in the sector
~$\Lambda_{\delta}$, one obtains
\begin{align}\nonumber
|P_{11}(x,\lambda)|\le C, |P_{12}(x,\lambda)|=o(1).
\end{align}
If ~$m(a, \lambda)=\tilde{m}(a, \lambda)$, by ~\eqref{psi111111} and ~\eqref{matrixp}, $P_{11}(x,\lambda), P_{12}(x,\lambda)$ are entire functions with respect to ~$\lambda$. Using ~Phragm\'{e}n-Lindel\"{o}f theorem \cite[Section 6.1]{lev} and ~Liouville theorem, one has that ~$P_{11}(x,\lambda)=A(x), P_{12}(x,\lambda)=0$. Therefore,  by ~\eqref{definitionp},
\begin{align}\nonumber
S(x,\lambda)=A(x)\tilde{S}(x,\lambda),\Psi(x,\lambda)=A(x)\tilde{\Psi}(x,\lambda).
\end{align}
Since ~$W(\Psi, S)=W(\tilde{\Psi}, \tilde{S})=1$, we know that $A(x)^2=1$. From the asymptotic behavior of ~$S$ and ~$\tilde{S}$, we can get that
$A(x)= 1$. Therefore, ~$S(x,\lambda)=\tilde{S}(x,\lambda)$, $\Psi(x,\lambda)=\tilde{\Psi}(x,\lambda)$.
From the fact that  ~$S(0,\lambda)=-s(a,\lambda)$ and \eqref{definitionpsi}, for any ~$x\in [0,d)\cup (d,a]$, we have
\begin{equation}\label{sxlambdaequal}
s(x,\lambda)=\tilde{s}(x,\lambda).
\end{equation}

From \eqref{sxlambdaequal}, one obtains that $d=\tilde{d},d_1=\tilde{d}_1$.
By  equations
\begin{align}
-(s'-\sigma s)'-\sigma(s'-\sigma s)-\sigma^2s&=\lambda s, x\in (0,d)\cup (d,a), \nonumber \\
-(s'-\tilde{\sigma} s)'-\tilde{\sigma}(s'-\tilde{\sigma} s)-\tilde{\sigma}^2s&=\lambda s, \nonumber x\in (0,d)\cup (d,a),
\end{align}
one knows that for ~$x\in [0,d)\cup (d,a]$,  ~$((\sigma-\tilde{\sigma})s)'=(\sigma-\tilde{\sigma})s'$.
In particular, the function ~$(\sigma-\tilde{\sigma})s$ is absolutely continuous on the interval ~$[0, d)\cup(d,a]$.
Choosing ~$\lambda_0\in \mathbb{C}$ so that for any ~$x\in[0,a]$,
\begin{align}\label{sxlambda}
s(x, \lambda_0)\ne 0.
\end{align}
Then on the interval ~$[0, d)\cup (d,a]$, the function ~$(\sigma-\tilde{\sigma})$ is absolutely continuous and ~$(\sigma-\tilde {\sigma})'=0$ almost everywhere.
Therefore, there exist ~$C_1, C_2\in \mathbb{R}$, such that
\begin{align}\nonumber
\sigma-\tilde {\sigma}=
\begin{cases}
C_1, & 0\le x<d, \\
C_2, & d<x\le a.
\end{cases}
\end{align}
 According to ~$m(a, \lambda)=\tilde{m}(a, \lambda)$ and ~\eqref{sxlambdaequal},
$s^{[1]}(a,\lambda)=\tilde{s}^{[1]}(a,\lambda)$.
By definitions of $s^{[1]}$ and  $\tilde{s}^{[1]}$, we know that ~$$(\sigma(a)-\tilde{\sigma}(a))s(a,\lambda)=0.$$
Using ~\eqref{sxlambda},  one has  $C_2=0$.
Then one obtains that~$\sigma(x)= \tilde{\sigma}(x)$ on ~$(d,a]$.
Hence for any~$x\in (d,a]$, one can see that ~$s^{[1]}(x, \lambda)=\tilde{s}^{[1]}(x, \lambda)$.
Because~$d_1=\tilde{d}_1$, we have~$s^{[1]}(d-, \lambda)=\tilde{s}^{[1]}(d-, \lambda)$.
From ~\eqref{sxlambda} and the definition of $s^{[1]}$, one has ~$$\lim_{ x\rightarrow d-} \sigma(x)-\tilde {\sigma}(x)=0.$$ Then~$C_1=0$.
Hence, we know that  $\sigma(x)=\tilde{\sigma}(x)$ almost everywhere  on ~$[0,a]$.
\end{proof}

\begin{remark}
	Consider the equation \eqref{Aaa} with jump condition \eqref{jump1}, denote it by $L(\sigma, d, d_1,d_2)$. Let $\sigma_1(x)\equiv 0,$
	\begin{align}\nonumber
	\sigma_2(x) =
	\begin{cases}
	-2, & 0\le x<d,\\
	0,  & d<x\le a.
	\end{cases}
	\end{align}
	Then  for $d<x\le a$, $L(\sigma_1, d,2,1)$ and  $L(\sigma_2, d,2,0)$ have the same Weyl-Titchmarsh function
	\begin{align}\nonumber
	m(x,\lambda)=-\frac{A(\lambda)\cos\sqrt{\lambda}(x-d)-B(\lambda)\sqrt{\lambda}\sin\sqrt{\lambda}(x-d)}
	{A(\lambda)\sin\sqrt{\lambda}(x-d)/\sqrt{\lambda}+B(\lambda)\cos\sqrt{\lambda}(x-d)}.
	\end{align}
	Here
	\begin{align}
	A(\lambda)=\frac{\cos2\sqrt{\lambda}d}{2}+\frac{\sin\sqrt{\lambda}d}{\sqrt{\lambda}},
	B(\lambda)=\frac{2\sin\sqrt{\lambda}d}{\sqrt{\lambda}}. \nonumber
	\end{align}
	In order to ensure the uniqueness of the inverse spectral problem, we transform  $Q(\rho)$ into the equation ~\eqref{Aaa} with the jump condition ~\eqref{jump}.
\end{remark}

$m(a,\lambda)$ has the following high-energy asymptotic behavior.

\begin{lemma}\label{lemmahigh}
	Assume that $\sigma\in L^2(0,a)$ and $\sigma$ is $C^n$ near $a$ for some $n\in\mathbb{N}$, then 	as $|\lambda|\rightarrow \infty$ in the sector ~$\Lambda_{\delta}:=\{\lambda\in \mathbb{C}| \delta<\arg(\lambda)<\pi-\delta, \delta\in (0,\pi/2)\}$,
	 $m(a,\lambda)$ has an asymptotic formula
	\begin{align}
	m(a,\lambda)=i\sqrt{\lambda}+i\sum_{l=0}^{n} c_l(a)\frac{1}{\lambda^{l/2}}+o\left(\frac{1}{\lambda^{n/2}}\right). \label{asym}
	\end{align}
The expansion coefficients $c_l(a)$ can be
recursively computed from
	\begin{align}
	c_{0}(a)&=-i\sigma(a), c_1(a)=-\frac{1}{2}\sigma'(a),  \nonumber\\
	c_{l+1}(a)&=-\frac{i}{2}c_{l}'(a)-\frac{1}{2}
	\sum_{j=1
}^{l-1}c_j(a)c_{l-j}(a), l\ge 1. \label{recu}
	\end{align}
\end{lemma}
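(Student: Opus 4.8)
The plan is to read the expansion directly out of the Riccati equation \eqref{ricatti}, using the crude asymptotic \eqref{aaaaa} as the seed and then improving the order one step at a time. Fix the branch $\sqrt{\lambda}$ for which $\operatorname{Re}(i\sqrt{\lambda})<0$ on the sector $\Lambda_{\delta}$, and set $w(x,\lambda)=m(x,\lambda)-i\sqrt{\lambda}$. Since $m^2=-\lambda+2i\sqrt{\lambda}\,w+w^2$, the unbounded term $\lambda$ cancels on both sides of \eqref{ricatti}, leaving the first-order equation
\[
w'-2i\sqrt{\lambda}\,w=\sigma^2+w^2-2\sigma w-2i\sigma\sqrt{\lambda},
\]
valid for $x$ in a left neighborhood of $a$, on which $\sigma$ is $C^n$. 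The linear part has integrating factor $e^{-2i\sqrt{\lambda}x}$, and because $\operatorname{Re}(2i\sqrt{\lambda})<0$ the resulting kernel $e^{2i\sqrt{\lambda}(a-t)}$ is exponentially concentrated at $t=a$; this localizes the whole analysis near the endpoint, so that the jump at $d$ and the behaviour of $\sigma$ away from $a$ enter only through exponentially small corrections. This is precisely why the hypothesis asks only that $\sigma$ be $C^n$ near $a$.

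Next I would carry out the formal matching. Substituting the ansatz $w=i\sum_{l\ge0}c_l(x)\,\lambda^{-l/2}$ and collecting equal powers of $\sqrt{\lambda}$, the coefficient of $\sqrt{\lambda}$ forces $c_0=-i\sigma$, and the $\lambda^{0}$ term forces, after inserting $c_0$, the relation $c_1=-\tfrac12\sigma'$, matching \eqref{recu}. At the generic order $\lambda^{-k/2}$ with $k\ge1$, the equation reads
\[
ic_k'+2c_{k+1}=-\sum_{j=0}^{k}c_jc_{k-j}-2i\sigma c_k .
\]
The decisive point is that the boundary terms $j=0$ and $j=k$ of the quadratic sum give $-2c_0c_k=2i\sigma c_k$, which exactly cancels the last term $-2i\sigma c_k$; what survives is $2c_{k+1}=-ic_k'-\sum_{j=1}^{k-1}c_jc_{k-j}$, i.e. the recursion \eqref{recu}. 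This also exhibits $c_l$ as a differential polynomial in $\sigma$ of order $l$, so that $\sigma\in C^n$ near $a$ is exactly what is needed to define $c_0(a),\dots,c_n(a)$.

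The substantive part, and the step I expect to be the main obstacle, is upgrading this formal computation to the genuine remainder estimate \eqref{asym} with error $o(\lambda^{-n/2})$ uniform in $\Lambda_{\delta}$. I would argue by induction on the order $k$, taking \eqref{aaaaa} as the base case. Assuming $m=i\sqrt{\lambda}+i\sum_{l=0}^{k}c_l\lambda^{-l/2}+r_k$ with $r_k=o(\lambda^{-k/2})$, I substitute into the Riccati equation to obtain a linear equation for $r_k$ whose forcing consists of an explicit term built from $\sigma^{(k+1)}$ together with quadratic-in-$r_k$ contributions. Inverting the operator $\tfrac{d}{dx}-2i\sqrt{\lambda}$ via the localized kernel above, and integrating by parts once — which is where the extra derivative, hence $\sigma\in C^{k+1}$, is consumed — gains a factor $\lambda^{-1/2}$ and extracts precisely the next coefficient $ic_{k+1}\lambda^{-(k+1)/2}$, while the quadratic term is of strictly higher order by the inductive hypothesis; this improves the remainder to $o(\lambda^{-(k+1)/2})$. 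Iterating until the available smoothness $\sigma\in C^n$ is exhausted yields \eqref{asym}. The delicate points to watch are the uniformity of all $o(\cdot)$ bounds across the open sector and the verification that the homogeneous contribution $r_k(x_0)e^{2i\sqrt{\lambda}(a-x_0)}$ from a fixed inner point $x_0\in(d,a)$ is exponentially small, so that it never pollutes the polynomial-in-$\lambda^{-1/2}$ expansion.
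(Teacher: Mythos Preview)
Your approach is correct and genuinely different from the paper's. The paper does \emph{not} bootstrap the Riccati equation. Instead, it picks $y<a$ with $\sigma\in C^n[y,a]$, uses the transformation-operator representations of Lemma~\ref{lemma22} for the solution $s(\cdot,\lambda;y)$ normalized at $y$, integrates by parts $n$ times in \eqref{1} and \eqref{2} (legitimate because the kernels inherit the $C^n$ smoothness of $\sigma$) to obtain explicit asymptotic series \eqref{zay}, \eqref{z1ay} for $s(a,\lambda;y)$ and $s^{[1]}(a,\lambda;y)$ separately, and then forms their quotient \eqref{high1}. The Riccati equation enters only at the very end, to identify the expansion coefficients $\hat c_l(a)$ with the recursion \eqref{recu}. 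Finally the paper shows, via the Wronskian identity coming from \eqref{langsiji}--\eqref{linearcom1}, that $m(a,\lambda)$ and $-s^{[1]}(a,\lambda;y)/s(a,\lambda;y)$ differ by the exponentially small quantity \eqref{exponential}, so the same expansion holds for $m(a,\lambda)$.

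Your route trades the transformation-operator machinery for a direct bootstrap of the Riccati equation itself: the equation and the localized kernel do all the work, and the recursion falls out of the formal matching rather than being verified a posteriori. This is more self-contained and closer to the Weyl-function asymptotics arguments in \cite{cla,ryb}. The paper's route, on the other hand, reuses Lemma~\ref{lemma22} and actually yields more --- separate expansions for numerator and denominator --- while making the reduction to the smooth interval $[y,a]$ completely explicit. Both methods handle the jump at $d$ the same way, by pushing it into an exponentially small correction; you do this through the homogeneous term $r_k(x_0)e^{2i\sqrt{\lambda}(a-x_0)}$, the paper through \eqref{exponential}. Your cautionary remarks about uniformity across the sector and about controlling the linear-in-$r_k$ term (which carries a coefficient $2\sigma-2M_k=-2i\sqrt{\lambda}+O(\lambda^{-1/2})$, not just $-2i\sqrt{\lambda}$) are exactly the places where care is needed to close the induction.
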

\begin{proof}
Assume that $\sigma$ is $C^n$ on the interval $[y,a]$.	
	We first compute the high-energy asymptotic form  of $-{s^{[1]}(a,\lambda;y)}/{s(a,\lambda;y)}$, where $s(a,\lambda;y)$  is normalized
	 according to \eqref{normalized}. By Lemma \ref{lemma22}, $s(x,\lambda;y)$ and $s^{[1]}(x,\lambda;y)$ have the following  representation
	\begin{align}
	s(x,\lambda;y)=\frac{\sin(\sqrt{\lambda }(x-y))}
	{\sqrt{\lambda }}+\int_y^{x} K(x,t;y)\frac{\sin(\sqrt{\lambda }(t-y))}
	{\sqrt{\lambda }} dt, \label{1}
	\end{align}
	\begin{align}
	s^{[1]}(x,\lambda;y)=\cos(\sqrt{\lambda }(x-y))+\int_y^{x} N(x,t;y)\cos (\sqrt{\lambda}(t-y) )dt. \label{2}
		\end{align}
Recall that \cite{cla,ryb} if $f$ is continuous on $[y,a]$, then as $|\lambda|\rightarrow \infty$ in the sector $\Lambda_{\delta}$,
\begin{align}
\int_y^a f(t)e^{-i\sqrt{\lambda}(t-y)} dt={e^{-i\sqrt{\lambda}(a-y)}}\left(-f(a)\frac{1}{i\sqrt{\lambda}}
+o(\frac{1}{\sqrt{\lambda}})\right). \label{reimann1}
\end{align}

	Since $\sigma$ is $C^n$ on  $[y,a]$, then kernel functions  $ K(a,t;y)$ and $ N(a,t;y)$ are also $C^n$ with respect to $t$ on  $[y,a]$.
	Letting $x=a$ in \eqref{1} and \eqref{2}, integration by  parts $n$ times,  using  \eqref{reimann1} and the estimates
	\begin{align}\nonumber
	\cos(\sqrt{\lambda}(a-y))=
	\frac{e^{-i\sqrt{\lambda}(a-y)}}{2}\left(1+O(e^{2i\sqrt{\lambda}(a-y)})\right),
	\end{align}
\begin{align}\nonumber
\sin(\sqrt{\lambda}(a-y))=-
\frac{e^{-i\sqrt{\lambda}(a-y)}}{2i}\left(1+O(e^{2i\sqrt{\lambda}(a-y)})\right),
\end{align}	
then there exist $m_l(a), \tau_l(a), l=0, \cdots, n$, so that
	\begin{align}
	s(a,  \lambda;y)=\frac{e^{-i\sqrt{\lambda}(a-y)}}{2\sqrt{\lambda}}
	\left(i+ \sum_{l=0}^{n}\frac{m_l(a)}{\lambda^{\frac{l+1}{2}}}
	+o(\lambda^{-\frac{n+1}{2}})\right),  \label{zay}
\end{align}
	\begin{align}
	s^{[1]}(a,  \lambda;y)=\frac{e^{-i\sqrt{\lambda}(a-y)}}{2\sqrt{\lambda}}
	\left(\sqrt{\lambda}+\sum_{l=0}^{n}\frac{\tau_l(a)}{\lambda^{\frac{l}{2}}}
	+o(\lambda^{-\frac{n}{2}})\right). \label{z1ay}
	\end{align}
	From \eqref{zay} and \eqref{z1ay}, one knows that
	\begin{align}
	 -\frac{s^{[1]}(a,  \lambda;y)}{s(a, \lambda;y)}&=-
	\left(\sqrt{\lambda}+\sum_{l=0}^{n}\frac{\tau_l(a)}{\lambda^{\frac{l}{2}}}
	+o(\lambda^{-\frac{n}{2}})\right)\times
	\left(i+ \sum_{j=1}^{n+1}\frac{m_{j-1}(a)}{\lambda^{\frac{j}{2}}}
	+o(\lambda^{-\frac{n+1}{2}})\right)^{-1} \nonumber \\
	&=i\sqrt{\lambda}+i\sum_{l=0}^{n} \hat{c}_l(a)\frac{1}{\lambda^{l/2}}+o\left(\frac{1}{\lambda^{n/2}}\right). \label{high1}
	\end{align}
		Substituting \eqref{high1}
	 into the Riccati equation \eqref{ricatti}, the
		  coefficients $\hat{c}_l(a), l=0,\cdots,n$, obey
		the recursion relation \eqref{recu}.

On the other hand,  by \eqref{langsiji}, \eqref{linearcom} and \eqref{linearcom1}, one has
\begin{align} \nonumber
\frac{s^{[1]}(a,\lambda;y)}{s(a,\lambda;y)}+m(a,\lambda)=\frac{s(y,\lambda)}{s(a,\lambda ) s(a,\lambda;y ) }.
\end{align}
From the integral representation of $s$, as~$|\lambda|\rightarrow\infty$ in the sector ~$\Lambda_{\delta}$,
\begin{align}\label{exponential}
\frac{s(y,\lambda)}{s(a,\lambda ) s(a,\lambda;y ) }=O(e^{-2(a-y)|\rm{Im} \sqrt{\lambda}|}).
\end{align}
Using ~\eqref{exponential}, we know that
 ~$m(a,\lambda)$ has a high-energy asymptotic expansion ~\eqref{asym} and its coefficients satisfy ~${c}_l(a)=\hat{c}_l(a), l=0,\cdots,n $.
Since ~$\hat{c}_l(a), l=0,\cdots,n, $  satisfy the recursion relation ~\eqref{recu}, then ~${c}_l(a), l=0,\cdots,n,$ also satisfy the recursion relation ~\eqref{recu}.
The proof is completed.   \end{proof}

\section{Inverse  problems by all eigenvalues}

In this section, we consider the inverse transmission problem knowing all eigenvalues. Let $u(r,\lambda)$ be the solution of  $-u''=\lambda\rho u$ satisfying initial conditions $u(0,\lambda)=0, u'(0,\lambda)=1.$  Then we have the following lemma.
\begin{lemma}
	Assume that {$\rho\in W_2^1\left((0, b_1)\cup (b_1, b)\right)$ and satisfies} \eqref{piecewiseac}. Then as $|\lambda|\rightarrow \infty$ in the sector ~$\Lambda_{\delta}:=\{\lambda\in \mathbb{C}| \delta<\arg(\lambda)<\pi-\delta, \delta\in (0,\pi/2)\}$,
	\begin{align}
	u(r,\lambda)=\frac{1}{\left(\rho(0)\rho(r)\right)^{1/4}\sqrt{\lambda}} \left(\sin(\sqrt{\lambda} x(r))+o(e^{{\rm{Im}}\sqrt{\lambda}x(r) })\right) ,  \label{urlambda}
	\end{align}
	\begin{align}
	u'(r,\lambda)=\left(\frac{\rho(r)}{\rho(0)}\right) ^{1/4}\left(\cos(\sqrt{\lambda} x(r))+o(e^{{\rm{Im}}\sqrt{\lambda}x(r) })\right)
	.  \label{asmptoticurlambda1}
	\end{align}
\end{lemma}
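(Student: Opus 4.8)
The plan is to reduce the asymptotics of $u$ and $u'$ to those of the solution $s(x,\lambda)$ of Section~3, for which Lemma~\ref{lemma22} already supplies the sharp representations \eqref{definitionkxt}--\eqref{definitionnxt}. First I would identify the Liouville image of $u$. Since $u(0,\lambda)=0$ and $u'(0,\lambda)=1$, formula \eqref{zxlambda} gives $z(0,\lambda)=(\rho(0))^{1/4}u(0,\lambda)=0$, while \eqref{zprime} at $x=0$ gives $z^{[1]}(0,\lambda)=(\rho(0))^{-1/4}u'(0,\lambda)=(\rho(0))^{-1/4}$. As $z(\cdot,\lambda)$ obeys the same equation \eqref{Aaa} and jump condition \eqref{jump} as $s(\cdot,\lambda)$, uniqueness for the initial value problem associated with the first order systems \eqref{11111} and \eqref{yn11} forces $z(x,\lambda)=(\rho(0))^{-1/4}s(x,\lambda)$. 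Solving \eqref{zxlambda} and \eqref{zprime} for $u$ and $u'$ then yields
\begin{align}\nonumber
u(r,\lambda)=\frac{s(x,\lambda)}{(\rho(0)\rho(r))^{1/4}},\qquad
u'(r,\lambda)=\Big(\frac{\rho(r)}{\rho(0)}\Big)^{1/4}\big(s^{[1]}(x,\lambda)+g(x)s(x,\lambda)\big),
\end{align}
with $x=x(r)$, so that everything reduces to the large-$\lambda$ behaviour of $s$ and $s^{[1]}$.

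Next I would insert \eqref{definitionkxt}--\eqref{definitionnxt}. On the first layer $0\le x<d$ one has $\varphi(x,\lambda)=\sin(\sqrt{\lambda}\,x)/\sqrt{\lambda}$ and $\phi(x,\lambda)=\cos(\sqrt{\lambda}\,x)$, so it remains to show that the kernel integrals are negligible against the leading factor $e^{{\rm Im}\sqrt{\lambda}\,x}$. Writing $\sqrt{\lambda}=\xi+i\eta$ with $\eta={\rm Im}\sqrt{\lambda}\to+\infty$ throughout $\Lambda_\delta$, the growing half of each trigonometric kernel is $e^{-i\sqrt{\lambda}t}$, and
\begin{align}\nonumber
\Big|e^{-\eta x}\int_0^x K(x,t)e^{-i\sqrt{\lambda}t}\,dt\Big|
\le\int_0^x |K(x,t)|\,e^{-\eta(x-t)}\,dt\longrightarrow 0
\end{align}
by dominated convergence, since $K(x,\cdot)\in L^2(0,x)\subset L^1(0,x)$ while $e^{-\eta(x-t)}\to0$ for $t<x$. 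Hence $\int_0^x K(x,t)\frac{\sin(\sqrt{\lambda}t)}{\sqrt{\lambda}}\,dt=\frac1{\sqrt{\lambda}}o(e^{{\rm Im}\sqrt{\lambda}x})$, and the same estimate handles the $N$-kernel. As $g(x)s(x,\lambda)=O\big(\lambda^{-1/2}e^{{\rm Im}\sqrt{\lambda}x}\big)$ is of lower order, it is absorbed into the error term of $s^{[1]}$. Substituting into the two identities above produces \eqref{urlambda} and \eqref{asmptoticurlambda1} for $0\le r<b_1$.

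The step I expect to be most delicate is the second layer $d<x\le a$, where $\varphi$ and $\phi$ are the $d_1^{\pm1}$-weighted combinations introduced just before Lemma~\ref{lemma22}. I would expand these by product-to-sum formulas into contributions of $\sin(\sqrt{\lambda}x)$, $\sin(\sqrt{\lambda}(2d-x))$ and their cosine analogues; since $x>2d-x$ for $x>d$, the term carrying $e^{{\rm Im}\sqrt{\lambda}x}$ dominates, so the exponential order recorded in \eqref{urlambda}--\eqref{asmptoticurlambda1} persists across the discontinuity, the subordinate $\sin(\sqrt{\lambda}(2d-x))$ part and all kernel integrals being again controlled by the estimate above. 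The genuine obstacle is the uniformity of these $o(\cdot)$ bounds over $\Lambda_\delta$ as $|\lambda|\to\infty$, which is secured by the sector bound ${\rm Im}\sqrt{\lambda}\ge \sin(\delta/2)\,|\sqrt{\lambda}|$ turning every oscillatory kernel integral into a dominated-convergence limit of the displayed type.
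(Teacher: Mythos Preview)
Your approach is essentially identical to the paper's: establish $s(x,\lambda)=(\rho(0))^{1/4}z(x,\lambda)$ (the paper's \eqref{linearproperty}), express $u$ and $u'$ through \eqref{zxlambda} and \eqref{zprime}, and then feed in the representations of Lemma~\ref{lemma22} together with a Riemann--Lebesgue type bound (your dominated-convergence estimate) on the kernel integrals. The paper's own proof is a two-line sketch citing precisely these ingredients, so your version is simply a more detailed rendering of the same argument.
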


\begin{proof}
According to ~\eqref{zxlambda} and ~\eqref{zprime}, we know
\begin{align}\label{linearproperty}
s(x,\lambda)=z(x,\lambda)(\rho(0))^{1/4}.
\end{align}
In light of  \eqref{zxlambda}, ~\eqref{zprime} and ~\eqref{linearproperty},  an application of  Lemma ~\ref{lemma22} and Riemann-Lebesgue lemma yields ~\eqref{urlambda} and ~\eqref{asmptoticurlambda1}.
\end{proof}

It is known that $\rho$ is uniquely determined by the knowledge of two sets of spectra~\cite{eck, kre, kre1}. If $\rho$ satisfies~\eqref{piecewiseac}, we provide a new proof of two-spectra theorem.
\begin{lemma} \label{lemma3.1}
	Assume that {$\rho\in W_2^1\left((0, b_1)\cup (b_1, b)\right)$ and satisfies} ~\eqref{piecewiseac}.  Then all zeros of $u(b,\lambda)$ and $u'(b,\lambda)$ uniquely determine $\rho(r)$ on $[0,b]$.
\end{lemma}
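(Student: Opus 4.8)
The plan is to reduce the two-spectrum problem to the Weyl--Titchmarsh uniqueness result of Theorem \ref{uniqueness} together with the injectivity of the Liouville transformation in Lemma \ref{unique}. I take two refractive indices $\rho$ and $\tilde\rho$, both satisfying \eqref{piecewiseac}, whose associated solutions $u(b,\lambda)$, $u'(b,\lambda)$ and $\tilde u(b,\lambda)$, $\tilde u'(b,\lambda)$ have the same zeros. Writing $a$, $d$, $d_1$, $\sigma$, $m(a,\lambda)$ for the objects attached to $\rho$ and $\tilde a$, $\tilde d$, $\tilde d_1$, $\tilde\sigma$, $\tilde m(a,\lambda)$ for those attached to $\tilde\rho$, the goal is to establish $a=\tilde a$, $\rho(b)=\tilde\rho(b)$ and $m(a,\lambda)=\tilde m(a,\lambda)$, after which Theorem \ref{uniqueness} yields $\sigma=\tilde\sigma$, $d=\tilde d$, $d_1=\tilde d_1$, and Lemma \ref{unique} yields $\rho=\tilde\rho$ on $[0,b]$.

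The first step is to promote equality of the zero sets to equality of the entire functions themselves. From the integral representations of Lemma \ref{lemma22} (equivalently from \eqref{urlambda}--\eqref{asmptoticurlambda1}), the functions $\lambda\mapsto u(b,\lambda)$ and $\lambda\mapsto u'(b,\lambda)$ are entire of order $1/2$. The key observation is that at $\lambda=0$ the equation $-u''=0$ with $u(0)=0$, $u'(0)=1$ forces $u(r,0)=r$ independently of $\rho$; hence $u(b,0)=b\neq0$ and $u'(b,0)=1\neq0$, so $\lambda=0$ is a zero of neither function. By Hadamard's factorization theorem an entire function of order $1/2$ with nonzero value at the origin equals that value times the canonical product over its zeros, so
\begin{align}\nonumber
u(b,\lambda)=b\prod_{k}\Big(1-\frac{\lambda}{\nu_k}\Big),\qquad u'(b,\lambda)=\prod_{k}\Big(1-\frac{\lambda}{\mu_k}\Big),
\end{align}
and the same formulas hold for $\tilde u$ with identical products. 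Consequently $u(b,\lambda)\equiv\tilde u(b,\lambda)$ and $u'(b,\lambda)\equiv\tilde u'(b,\lambda)$. Comparing the exponential types (equivalently the asymptotic density $\sqrt{\nu_k}\sim k\pi/a$) via \eqref{urlambda} gives $a=\tilde a$, so both transformed problems live on the same interval $[0,a]$.

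It remains to recover $\rho(b)$ and to identify the Weyl functions. Using \eqref{zxlambda}, \eqref{zprime} with $g(a)=0$ and \eqref{linearproperty}, one has $s(a,\lambda)=(\rho(0)\rho(b))^{1/4}u(b,\lambda)$ and $s^{[1]}(a,\lambda)=(\rho(0)/\rho(b))^{1/4}u'(b,\lambda)$, whence
\begin{align}\nonumber
m(a,\lambda)=-\frac{s^{[1]}(a,\lambda)}{s(a,\lambda)}=-\frac{1}{\rho(b)^{1/2}}\,\frac{u'(b,\lambda)}{u(b,\lambda)},\qquad \tilde m(a,\lambda)=-\frac{1}{\tilde\rho(b)^{1/2}}\,\frac{\tilde u'(b,\lambda)}{\tilde u(b,\lambda)}.
\end{align}
Since $u'(b,\lambda)/u(b,\lambda)=\tilde u'(b,\lambda)/\tilde u(b,\lambda)$ by the previous step, while both $m(a,\lambda)$ and $\tilde m(a,\lambda)$ satisfy $m(a,\lambda)=i\sqrt\lambda(1+o(1))$ as $|\lambda|\to\infty$ in $\Lambda_\delta$ by \eqref{aaaaa}, dividing these asymptotics by $\sqrt\lambda$ forces the common ratio to satisfy $u'(b,\lambda)/u(b,\lambda)\sim -i\,\rho(b)^{1/2}\sqrt\lambda$ and, simultaneously, $\sim -i\,\tilde\rho(b)^{1/2}\sqrt\lambda$, so $\rho(b)=\tilde\rho(b)$ and therefore $m(a,\lambda)=\tilde m(a,\lambda)$. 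Now Theorem \ref{uniqueness} gives $d=\tilde d$, $d_1=\tilde d_1$ and $\sigma=\tilde\sigma$ on $[0,a]$, and since in addition $\rho(b)=\tilde\rho(b)$, Lemma \ref{unique} shows $\rho=\tilde\rho$ on $[0,b]$.

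The main obstacle is the recovery of the single constant $\rho(b)$: the Weyl function carries the factor $\rho(b)^{-1/2}$, so a priori the two spectra determine only the ratio $u'(b,\lambda)/u(b,\lambda)$ and not $m(a,\lambda)$ itself. The device that overcomes this is the exact normalization $u(r,0)=r$, which turns ``same zeros'' into ``same functions'' with no undetermined multiplicative constant, combined with the universal leading asymptotics $m(a,\lambda)\sim i\sqrt\lambda$, which pins down $\rho(b)$. One should also verify in passing that $\lambda=0$ is not an eigenvalue and that the order-$1/2$ bounds required for Hadamard's theorem follow from the integral representations of Lemma \ref{lemma22}.
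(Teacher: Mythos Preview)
Your proof is correct and follows essentially the same route as the paper's: recover the entire functions $u(b,\lambda)$ and $u'(b,\lambda)$ from their zeros via the normalization $u(b,0)=b$, $u'(b,0)=1$ and Hadamard factorization, then use the leading asymptotics $m(a,\lambda)\sim i\sqrt\lambda$ from \eqref{aaaaa} to pin down $\rho(b)$, and finally invoke Theorem~\ref{uniqueness} and Lemma~\ref{unique}. The only cosmetic difference is that the paper cites Krein's formula (11.7) in \cite{kre2} to recover $a$ directly from the zeros of $u(b,\lambda)$, whereas you read it off from the exponential type; both are fine.
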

\begin{proof}
	First note that by (11.7) in \cite{kre2}, the constant $a$ is uniquely determined by all zeros of $u(b,\lambda)$.
According to ~\eqref{zxlambda} and the requirement $g(a)=0$, we get \begin{align}
m(a,\lambda)=-\frac{u'(b,\lambda)}{\beta u(b,\lambda)}. \label{aaaxx}
\end{align}
Here $\beta$ is defined by \eqref{d}. By
\begin{align}\label{ub0}
u(b,0)=b,\ u'(b,0)=1, \end{align}
we know that all zeros of $u(b,\lambda)$ and $u'(b,\lambda)$ uniquely determine $u(b,\lambda)$ and $u'(b,\lambda)$, respectively.
From ~\eqref{aaaaa} and  \eqref{aaaxx}, we have that
$ \rho(b)$ and hence ~$m(a,\lambda)$ are uniquely determined by all zeros of ~$u(b,\lambda)$ and ~$u'(b,\lambda)$.
Using Theorem \ref{uniqueness} and Lemma ~\ref{unique}, all zeros of $u(b,\lambda)$ and $u'(b,\lambda)$ uniquely determine $\rho(r)$ on $[0,b]$.
\end{proof}

\begin{lemma} \label{dlambda1}
Assume that {$\rho\in W_2^1\left((0, b_1)\cup (b_1, b)\right)$ and satisfies} ~\eqref{piecewiseac}.\\
{\rm{(i)}} If ~$\rho(b)\ne1$, then there exists ~$A_0>0$, so that in the sector ~$\Lambda_{\delta}:=\{\lambda\in \mathbb{C}| \delta<\arg(\lambda)<\pi-\delta, \delta\in (0,\pi/2)\}$, $D(\lambda)$ has the following estimate
\begin{align}
|D(\lambda)|\ge A_0\frac{e^{|{\rm{Im} }\sqrt{\lambda}|(a+b)}}{|\sqrt{\lambda}|}, |\lambda|\rightarrow\infty. \nonumber
\end{align}
 {\rm{(ii)}}     Assume that ~$\rho(b)=1$ and there exist ~$m\ge 1, \varepsilon>0$, so that ~$\rho\in C^{(m)}(b-\varepsilon,b]$,
for~$k=1,\cdots, m-1$, $\rho^{(k)}(b)=0$ and~$\rho^{(m)}(b)\ne0$.
Then there exists ~$A_0>0$, so that in the sector~$\Lambda_{\delta}$,
\begin{align} \label{D}
|D(\lambda)|\ge A_0\frac{e^{|{\rm{Im} }\sqrt{\lambda}|(a+b)}}{|\sqrt{\lambda}|^{m+1}}, |\lambda|\rightarrow\infty.
\end{align}

\end{lemma}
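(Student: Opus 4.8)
The plan is to read off $D(\lambda)$ from the determinant \eqref{eq1.4} and insert the leading asymptotics \eqref{urlambda}, \eqref{asmptoticurlambda1} at $r=b$, where $x(b)=a$. Writing $\tau:={\rm Im}\sqrt{\lambda}$, in the sector $\Lambda_\delta$ one has $\tau\ge \sin(\delta/2)\,|\sqrt{\lambda}|\to\infty$, so $\tau$ and $|\sqrt{\lambda}|$ are comparable and the dominant exponential is always $e^{-i\sqrt{\lambda}\,\cdot}$. Substituting and applying the product-to-sum formulas, I would bring $D$ into the form
\begin{align}\nonumber
D(\lambda)=\frac{1}{2\rho(0)^{1/4}\sqrt{\lambda}}\Big[(\rho(b)^{1/4}-\rho(b)^{-1/4})\sin\sqrt{\lambda}(a+b)+(\rho(b)^{1/4}+\rho(b)^{-1/4})\sin\sqrt{\lambda}(b-a)\Big]+\frac{1}{|\sqrt{\lambda}|}o(e^{\tau(a+b)}),
\end{align}
where the remainder collects the $o(e^{\tau a})$ terms of \eqref{urlambda}, \eqref{asmptoticurlambda1} multiplied by the factors $\cos\sqrt{\lambda}b$ and $\sin(\sqrt{\lambda}b)/\sqrt{\lambda}$. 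Since $a+b>|b-a|$ for $a,b>0$, one has $|\sin\sqrt{\lambda}(a+b)|\ge \tfrac14 e^{\tau(a+b)}$ for large $\tau$, while $\sin\sqrt{\lambda}(b-a)=O(e^{\tau|b-a|})=o(e^{\tau(a+b)})$.

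For part (i) the coefficient $\rho(b)^{1/4}-\rho(b)^{-1/4}$ is nonzero precisely because $\rho(b)\ne1$, so the first term dominates the second trigonometric term and the remainder, both of which are $\tfrac{1}{|\sqrt{\lambda}|}o(e^{\tau(a+b)})$. This yields $|D(\lambda)|\ge A_0\,e^{\tau(a+b)}/|\sqrt{\lambda}|$ for some $A_0>0$, as claimed.

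For part (ii) the leading coefficient $\rho(b)^{1/4}-\rho(b)^{-1/4}$ vanishes, so I must extract the next nonzero order, and here I would route the computation through the Weyl--Titchmarsh function. Using \eqref{aaaxx} with $\beta=\rho(b)^{-1/2}=1$, write $D(\lambda)=u(b,\lambda)\big[-\tfrac{\sin\sqrt{\lambda}b}{\sqrt{\lambda}}\,m(a,\lambda)-\cos\sqrt{\lambda}b\big]$ and insert the high-energy expansion of Lemma \ref{lemmahigh} with $n=m-1$; this is legitimate because $\rho\in C^{(m)}$ near $b$ makes $\sigma\in C^{m-1}$ near $a$ through \eqref{definitionsigma}, \eqref{gprime}. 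The ``free'' part satisfies $-i\sin\sqrt{\lambda}b-\cos\sqrt{\lambda}b=-e^{i\sqrt{\lambda}b}=O(e^{-\tau b})$, so after multiplication by $u(b,\lambda)=O(e^{\tau a}/|\sqrt{\lambda}|)$ it contributes only $O(e^{\tau(a-b)}/|\sqrt{\lambda}|)$, which is exponentially smaller than the target $e^{\tau(a+b)}/|\sqrt{\lambda}|^{m+1}$ and may be discarded.

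The crux is then to locate the first surviving coefficient. I would show that $\rho(b)=1$, $\rho^{(k)}(b)=0$ for $1\le k\le m-1$, $\rho^{(m)}(b)\ne0$, together with $g(a)=0$, force $\sigma(x)=\tfrac14\frac{\rho^{(m)}(b)}{(m-1)!}(x-a)^{m-1}+o((x-a)^{m-1})$ as $x\to a-$: near $b$ one has $\sigma\approx\tfrac14\rho'$ by \eqref{definitionsigma}, the term $g$ is $O((r-b)^{2m-1})$ and hence of higher order by \eqref{gprime}, and the Liouville change of variables gives $x-a\sim r-b$. Consequently $\sigma^{(j)}(a)=0$ for $0\le j\le m-2$ and $\sigma^{(m-1)}(a)\ne0$, so the recursion \eqref{recu} yields $c_0(a)=\cdots=c_{m-2}(a)=0$ while $c_{m-1}(a)\ne0$. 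Then the bracket reduces to $-i\,c_{m-1}(a)\,\sin\sqrt{\lambda}b\,\lambda^{-m/2}(1+o(1))$, and multiplying by $u(b,\lambda)\sim \sin\sqrt{\lambda}a/(\rho(0)^{1/4}\sqrt{\lambda})$ gives $D(\lambda)\sim -i\,c_{m-1}(a)\,\rho(0)^{-1/4}\,\sin\sqrt{\lambda}a\,\sin\sqrt{\lambda}b\,\lambda^{-(m+1)/2}$, whose modulus is bounded below by $A_0\,e^{\tau(a+b)}/|\sqrt{\lambda}|^{m+1}$. The main obstacle is exactly this bookkeeping: verifying the vanishing pattern of $c_l(a)$ from the derivative conditions on $\rho$ and controlling all remainders uniformly in $\Lambda_\delta$ so that the order-$m$ term genuinely dominates.
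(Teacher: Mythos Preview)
Your argument is correct. For part~(ii) it coincides with the paper's proof: both factor $D(\lambda)$ through the Weyl function and use the high-energy expansion of Lemma~\ref{lemmahigh} together with the vanishing $c_0(a)=\cdots=c_{m-2}(a)=0$, $c_{m-1}(a)\ne0$ (the paper packages the bracket as $\frac{\sqrt{\lambda}\cos\sqrt{\lambda}b}{\sin\sqrt{\lambda}b}+m(a,\lambda)$ instead of your $-\frac{\sin\sqrt{\lambda}b}{\sqrt{\lambda}}m(a,\lambda)-\cos\sqrt{\lambda}b$, which is the same up to the harmless factor $\sin\sqrt{\lambda}b/\sqrt{\lambda}$).

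For part~(i) the routes differ. You expand the determinant \eqref{eq1.4} directly via product-to-sum and isolate the $\sin\sqrt{\lambda}(a+b)$ term with nonzero coefficient $\rho(b)^{1/4}-\rho(b)^{-1/4}$; this is elementary and avoids the Weyl function entirely. The paper instead starts from \eqref{eq1.4=} and splits $D/\rho(b)^{1/4}=D_1+D_2$ with $D_1=(1-\beta)\frac{\sin\sqrt{\lambda}b}{\sqrt{\lambda}}z^{[1]}(a,\lambda)$ and $D_2$ carrying the Weyl bracket; since $\beta=\rho(b)^{-1/2}\ne1$, $D_1$ already has the right size while $D_2$ is shown to be $o(|\lambda|^{-1/2}e^{|{\rm Im}\sqrt{\lambda}|(a+b)})$ using \eqref{aaaaa}. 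Your approach is shorter here, while the paper's decomposition has the advantage of being structurally uniform with the treatment of~(ii).
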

\begin{proof}
We first prove that (ii) holds. According to \eqref{definitionsigma}, $\sigma \in C^{(m-1)}(a-\varepsilon,a]$ and for $=1,\cdots, m-2$,  $\sigma^{(l)}(a)=0$. From \eqref{eq1.4=}, we know that
\begin{align}
D(\lambda)=-\frac{\sin \sqrt{\lambda}b}{\sqrt{\lambda}}z(a,\lambda)\left(\frac{\sqrt{\lambda}\cos \sqrt{\lambda}b}{\sin \sqrt{\lambda}b}+m(a,\lambda)\right). \label{dlambda}
\end{align}
Notice that
in the sector  ~$\Lambda_{\delta}$, for any ~$p\in \mathbb{N}$, one has
\begin{align}\nonumber
-\frac{\sqrt{\lambda}\cos \sqrt{\lambda}b}{\sin \sqrt{\lambda}b}=i\sqrt{\lambda}+o(\frac{1}{\lambda^{p/2}}), |\lambda|\rightarrow\infty.
\end{align}
From the high-energy asymptotics of ~$m(a,\lambda)$, we know that there exists ~$A_0>0$, so that in the sector ~$\Lambda_{\delta}$,
\begin{align}\nonumber
\left|\frac{\sqrt{\lambda}\cos \sqrt{\lambda}b}{\sin \sqrt{\lambda}b}+m(a,\lambda)\right|\ge A_0 \frac{1}{|\sqrt{\lambda}|^{m-1}}, |\lambda|\rightarrow\infty.
\end{align}
Therefore by ~\eqref{dlambda}, one can obtain \eqref{D}.

We next show that~(i) holds. From ~\eqref{eq1.4=}, one knows
\begin{align}
\frac{D(\lambda)}{\rho(b)^{1/4}}&=(1-\beta)\frac{\sin \sqrt{\lambda}b}{\sqrt{\lambda}}z^{[1]}(a,\lambda)+ \beta\frac{\sin \sqrt{\lambda}b}{\sqrt{\lambda}}z(a,\lambda)\left(-\frac{\sqrt{\lambda}\cos \sqrt{\lambda}b}{\sin \sqrt{\lambda}b}-m(a,\lambda)\right) \nonumber\\
&\equiv D_1+D_2. \nonumber
\end{align}
By the asymptotic form of ~$z^{[1]}(a,\lambda)$, one obtains  there exists ~$A_0>0$, so that in the sector ~$\Lambda_{\delta}$,
\begin{align}
|D_1(\lambda)|\ge A_0\frac{e^{|{\rm{Im}} \sqrt{\lambda}|(a+b)}}{|\sqrt{\lambda}|}, |\lambda|\rightarrow\infty. \label{d1}
\end{align}
By \eqref{aaaaa},  in the sector ~$\Lambda_{\delta}$, $D_2(\lambda)$ has the following estimate
\begin{align}
D_2(\lambda)= O{(|\lambda|^{-1})e^{|{\rm{Im} }\sqrt{\lambda}|(a+b)}}o(|\lambda|^{1/2})=o{(|\lambda|^{-1/2})e^{|{\rm{Im}} \sqrt{\lambda}|(a+b)}}, |\lambda|\rightarrow\infty. \label{d2}
\end{align}
According to ~\eqref{d1} and ~\eqref{d2}, we can obtain (i).
\end{proof}

When $a<b$, we prove the following uniqueness theorem.

\begin{theorem}\label{a<b}
	Assume that {$\rho\in W_2^1\left((0, b_1)\cup (b_1, b)\right)$ and satisfies} \eqref{piecewiseac} and $a<b$. Then all special  transmission eigenvalues uniquely determine $\rho$.
\end{theorem}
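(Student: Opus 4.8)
The plan is to reduce the statement to the two-spectrum uniqueness result of Lemma~\ref{lemma3.1} by recovering, from the single sequence of transmission eigenvalues, the two entire functions $u(b,\lambda)$ and $u'(b,\lambda)$ separately; the hypothesis $a<b$ is exactly what makes this separation possible. Suppose $\rho$ and $\tilde\rho$ both satisfy \eqref{piecewiseac} with $a<b$ and $\tilde a<b$, and share the same transmission eigenvalues with multiplicities. By the Hadamard factorization \eqref{eq1.5} the corresponding characteristic functions then agree up to a constant, $\tilde D(\lambda)=cD(\lambda)$ with $c=\tilde\gamma/\gamma\in\mathbb{R}$.

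First I would rewrite $D$ in the variable $z=\sqrt{\lambda}$. Using \eqref{eq1.4} together with $\sin zb=\frac{1}{2i}(e^{izb}-e^{-izb})$ and $\cos zb=\frac12(e^{izb}+e^{-izb})$, one gets the decomposition
\begin{align}
D(z^2)=\tfrac12 e^{izb}F_+(z)+\tfrac12 e^{-izb}F_-(z), \qquad F_\pm(z):=\mp\frac{i}{z}u'(b,z^2)-u(b,z^2), \nonumber
\end{align}
so that $F_-(z)=F_+(-z)$, $u(b,z^2)=-\tfrac12(F_+(z)+F_-(z))$ and $u'(b,z^2)=\tfrac{iz}{2}(F_+(z)-F_-(z))$. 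The decisive structural point is that, by the asymptotics \eqref{urlambda}--\eqref{asmptoticurlambda1} (in which $x(b)=a$) together with the fact that $u(b,\cdot),u'(b,\cdot)$ are entire of order $1/2$ and even in $z$, the functions $F_\pm$ are entire in $z$ of exponential type $a$, hence of type strictly less than $b$; likewise the tilded objects have type $\tilde a<b$.

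Next I would exploit $\tilde D=cD$, which with $G_\pm:=\tilde F_\pm-cF_\pm$ reads $e^{izb}G_+(z)+e^{-izb}G_-(z)=0$, i.e. $G_+=-e^{-2izb}G_-$. Both $G_\pm$ are entire of exponential type $a^{\ast}:=\max\{a,\tilde a\}<b$. Since the multiplier $e^{-2izb}$ shifts the indicator by $2b\sin\theta$, one has $h_{G_+}(\theta)=h_{G_-}(\theta)+2b\sin\theta$; evaluating at $\theta=\pm\tfrac{\pi}{2}$ gives $h_{G_-}(\tfrac{\pi}{2})\le a^{\ast}-2b$ and $h_{G_-}(-\tfrac{\pi}{2})\le a^{\ast}$. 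The classical inequality $h(\theta)+h(\theta+\pi)\ge0$, valid for any nonzero function of exponential type, then yields $0\le 2a^{\ast}-2b$, contradicting $a^{\ast}<b$. Hence $G_\pm\equiv0$, that is $\tilde F_\pm=cF_\pm$, so $\tilde u(b,\lambda)=c\,u(b,\lambda)$ and $\tilde u'(b,\lambda)=c\,u'(b,\lambda)$. Evaluating at $\lambda=0$ and invoking the normalization \eqref{ub0}, namely $u(b,0)=b$ and $u'(b,0)=1$, forces $c=1$. Therefore $u(b,\lambda)=\tilde u(b,\lambda)$ and $u'(b,\lambda)=\tilde u'(b,\lambda)$, so the two spectra coincide and Lemma~\ref{lemma3.1} gives $\rho=\tilde\rho$.

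The main obstacle is the band-separation step in the third paragraph. One must make precise that $F_\pm$ are of exponential type $a$ in \emph{all} directions of the $z$-plane, whereas \eqref{urlambda}--\eqref{asmptoticurlambda1} are stated only in the sector $\Lambda_\delta$; this requires combining those asymptotics with the order-$1/2$ entireness and the evenness of $u(b,\cdot)$ to control the growth on the remaining rays. With the type established, the Phragmén--Lindelöf/indicator estimate must be carried out so that the strict inequality $a<b$ genuinely produces the contradiction. Everything after the identities $u(b,\lambda)=\tilde u(b,\lambda)$ and $u'(b,\lambda)=\tilde u'(b,\lambda)$ is then routine, being packaged inside Lemma~\ref{lemma3.1}, whose proof already recovers $\rho(b)$ and $m(a,\lambda)$ and then applies Theorem~\ref{uniqueness} and Lemma~\ref{unique}.
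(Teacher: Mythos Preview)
Your argument is correct and reaches the same endpoint as the paper---recovering $u(b,\lambda)$ and $u'(b,\lambda)$ separately, then invoking Lemma~\ref{lemma3.1}---but the mechanism for the separation is genuinely different. The paper takes the direct route: evaluate the identity $\tilde D=cD$ at the zeros $\lambda=k^2\pi^2/b^2$ of $\sin(\sqrt{\lambda}b)$ to obtain equalities for $u(b,\cdot)$ on that sequence, form $f_1(\lambda)=\frac{1}{\gamma}u(b,\lambda)-\frac{1}{\tilde\gamma}\tilde u(b,\lambda)$, note that $\sqrt{\lambda}f_1(\lambda)/\sin(\sqrt{\lambda}b)$ is entire, and use the sector asymptotics \eqref{urlambda} together with $a<b$ and Phragm\'en--Lindel\"of to conclude $f_1\equiv0$; the same is done for $u'(b,\cdot)$ using the zeros of $\cos(\sqrt{\lambda}b)$. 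Your approach instead splits $D$ into $e^{\pm izb}$ components and runs an indicator-function argument to force the differences $G_\pm$ to vanish. Both hinge on the gap $a<b$ in the same way; the paper's interpolation argument is shorter and avoids the indicator machinery, while yours is more conceptual and makes the underlying Paley--Wiener structure explicit.

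Two small points to tidy. First, your $F_\pm$ are not entire: since $u'(b,0)=1$, the term $\mp\tfrac{i}{z}u'(b,z^2)$ has a simple pole at $z=0$. This is harmless---multiply by $z$ before invoking $h(\theta)+h(\theta+\pi)\ge0$---but the wording should be adjusted. Second, the ``obstacle'' you flag about the type bound holding in all directions is in fact already available from the paper: the integral representations \eqref{definitionkxt}--\eqref{definitionnxt} of Lemma~\ref{lemma22} (via \eqref{linearproperty}) give global Paley--Wiener bounds of type $a$ for $z(a,\lambda)$ and $z^{[1]}(a,\lambda)$, and hence for $u(b,\lambda)$ and $u'(b,\lambda)$ through \eqref{zxlambda}--\eqref{zprime}; no extra argument beyond the sector asymptotics is needed.
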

\begin{proof}
We require that if a certain symbol $\gamma$ denotes an object related to $Q( \rho)$,
then the corresponding symbol $\tilde{\gamma}$ denotes the analogous object related to $Q(\tilde{\rho})$.

	From \eqref{eq1.5}, we know
	\begin{align}\nonumber
	\frac{1}{\gamma}D\left(\frac{k^2\pi^2}{b^2}\right)=\frac{1}{\tilde{\gamma}}\tilde{D}\left(\frac{k^2\pi^2}{b^2}\right), k\in \mathbb{N}.
	\end{align}
Then by \eqref{eq1.4},
	\begin{align}
	\frac{1}{\gamma}u\left(b, \frac{k^2\pi^2}{b^2}\right)=\frac{1}{\tilde{\gamma}}\tilde{u}\left(b, \frac{k^2\pi^2}{b^2}\right), k\in \mathbb{N}. \label{u}
	\end{align}
	Define
	\begin{align}\nonumber
	f_1(\lambda)=\frac{1}{\gamma}u(b, \lambda)-\frac{1}{\tilde{\gamma}}\tilde{u}(b, \lambda).
	\end{align}
	From ~\eqref{u}, ~$\frac{\sqrt{\lambda}f_1(\lambda)}{\sin \sqrt{\lambda}b}$ is an entire function. Since ~$a<b$,
	using ~\eqref{urlambda}, we obtain that in the sector~$\Lambda_{\delta}$,
	\begin{align}\nonumber
	\lim_{|\lambda|\rightarrow \infty}\frac{\sqrt{\lambda}f_1(\lambda)}{\sin \sqrt{\lambda}b}=0.
	\end{align}
	According to ~Phragm\'{e}n-Lindel\"{o}f theorem and ~Liouville theorem, one has $\frac{\sqrt{\lambda}f_1(\lambda)}{\sin \sqrt{\lambda}b}\equiv 0$. Then ~$f_1(\lambda)\equiv 0,$ and hence
	\begin{align}\label{ublambda1111}
	\frac{1}{\gamma}u(b, \lambda)=\frac{1}{\tilde{\gamma}}\tilde{u}(b, \lambda).
	\end{align}
	
	By a similar argument, from
	\begin{align}\nonumber
	\frac{1}{\gamma}{D}\left(\frac{(2k-1)^2\pi^2}{4b^2}\right)=\frac{1}{\tilde{\gamma}}\tilde{D}\left(\frac{(2k-1)^2\pi^2}{4b^2}\right), k\in \mathbb{N},
	\end{align}
	one has
	\begin{align}
	\frac{1}{\gamma}u'\left(b, \frac{(2k-1)^2\pi^2}{4b^2}\right)=\frac{1}{\tilde{\gamma}}\tilde{u}'\left(b, \frac{(2k-1)^2\pi^2}{4b^2}\right), k\in \mathbb{N}. \label{uprime}
	\end{align}
	Define
	\begin{align}\nonumber
	f_2(\lambda)=\frac{1}{\gamma}u'(b, \lambda)-\frac{1}{\tilde{\gamma}}\tilde{u}'(b, \lambda).
	\end{align}
	From ~\eqref{uprime} we know that $\frac{f_2(\lambda)}{\cos \sqrt{\lambda}b}$ is an entire function. Since ~$a<b$,
	according to ~\eqref{asmptoticurlambda1}, in the sector ~$\Lambda_{\delta}$, we have
	\begin{align}\nonumber
	\lim_{|\lambda|\rightarrow \infty}\frac{f_2(\lambda)}{\cos \sqrt{\lambda}b}=0.
	\end{align}
	By using ~Phragm\'{e}n-Lindel\"{o}f theorem and ~Liouville theorem, one obtains
	$\frac{f_2(\lambda)}{\cos \sqrt{\lambda}b}\equiv 0$. Then ~$f_2(\lambda)\equiv 0$, and hence
	\begin{align}\label{ublambda11111}
	\frac{1}{\gamma}u'(b, \lambda)=\frac{1}{\tilde{\gamma}}\tilde{u}'(b, \lambda).
	\end{align}
	By~\eqref{ublambda1111}, \eqref{ublambda11111} and Lemma~\ref{lemma3.1}, we know that~$\rho\equiv\tilde{\rho}$. The proof is complete.
	\end{proof}

When $a=b$, we need more information to uniquely determine $\rho$.

\begin{theorem}\label{a=b}
	Assume that {$\rho\in W_2^1\left((0, b_1)\cup (b_1, b)\right)$ and satisfies} \eqref{piecewiseac} and $a=b$.  Assume that
	one of the following conditions holds:
	\par	{\rm{(i)}}	 the constant $\gamma$ in \eqref{eq1.5} is known;
	\par	{\rm{(ii)}}  $\rho(b)\ne1$  is known;
	\par	{\rm{(iii)}}  $\rho(b)=1$ and $\rho\in C^{(m)}(b-\varepsilon,b]$ for some $\varepsilon>0$ and some $m\in \mathbb{N}$,   for $k=1,\cdots, m-1$,  $\rho^{(k)}(b)=0$  and $\rho^{(m)}(b)\ne0$ is known. \\
	Then  all special transmission eigenvalues uniquely determine $\rho$.
\end{theorem}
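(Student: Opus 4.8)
The plan is to run the argument of Theorem \ref{a<b} but to keep track of the one constant that no longer dies when $a=b$, and then to eliminate it using whichever of (i)--(iii) is assumed. As before, let $\tilde\gamma,\tilde u,\dots$ refer to a second refractive index $\tilde\rho$ producing the same transmission eigenvalues, so that by \eqref{eq1.5} one has $\frac1\gamma D(\lambda)=\frac1{\tilde\gamma}\tilde D(\lambda)$. Evaluating \eqref{eq1.4} at the zeros $\lambda=k^2\pi^2/b^2$ of $\sin\sqrt\lambda b$ and at the zeros $\lambda=(2k-1)^2\pi^2/(4b^2)$ of $\cos\sqrt\lambda b$, I would deduce that, with
\[
f_1(\lambda):=\tfrac1\gamma u(b,\lambda)-\tfrac1{\tilde\gamma}\tilde u(b,\lambda),\qquad f_2(\lambda):=\tfrac1\gamma u'(b,\lambda)-\tfrac1{\tilde\gamma}\tilde u'(b,\lambda),
\]
the quotients $\sqrt\lambda f_1/\sin\sqrt\lambda b$ and $f_2/\cos\sqrt\lambda b$ are entire. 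The decisive difference from the case $a<b$ is that now \eqref{urlambda} and \eqref{asmptoticurlambda1} only make these two quotients \emph{bounded} (rather than $o(1)$) in $\Lambda_\delta$, so Phragm\'{e}n-Lindel\"{o}f together with Liouville's theorem forces them to be constants $c_1,c_2$. Letting $\lambda\to0$ and using \eqref{ub0} I expect $c_1=c_2=\frac1\gamma-\frac1{\tilde\gamma}=:c$, whence everything reduces to showing $c=0$ (equivalently $\gamma=\tilde\gamma$); once this holds, $f_1\equiv f_2\equiv0$ gives $u(b,\lambda)=\tilde u(b,\lambda)$ and $u'(b,\lambda)=\tilde u'(b,\lambda)$, and Lemma \ref{lemma3.1} completes the proof.

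Under hypothesis (i) there is nothing to prove: knowing $\gamma$ forces $\gamma=\tilde\gamma$, so $c=0$. Under (ii) I would compare the leading amplitudes. Since $\rho(b)$ is known we have $\tilde\rho(b)=\rho(b)$; matching the coefficient of $\sin\sqrt\lambda b/\sqrt\lambda$ in $f_1=c\,\sin\sqrt\lambda b/\sqrt\lambda$ through \eqref{urlambda}, and of $\cos\sqrt\lambda b$ in $f_2=c\cos\sqrt\lambda b$ through \eqref{asmptoticurlambda1}, gives, with $X:=\frac{1}{\gamma\rho(0)^{1/4}}-\frac{1}{\tilde\gamma\tilde\rho(0)^{1/4}}$,
\[
\frac{X}{\rho(b)^{1/4}}=c=\rho(b)^{1/4}X .
\]
Hence $X\big(\rho(b)^{1/2}-1\big)=0$, and since $\rho(b)\neq1$ this forces $X=0$ and then $c=0$.

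The hard part is hypothesis (iii): with $\rho(b)=1$ the two relations above coincide, the leading-order comparison is vacuous, and one must descend to the first nonvanishing term of the high-energy expansion. Here $\beta=\tilde\beta=1$, and from \eqref{aaaxx} together with the two constant relations a short computation yields the exact identity
\[
m(a,\lambda)-\tilde m(a,\lambda)=\frac{c\,\gamma\,\tilde D(\lambda)}{u(b,\lambda)\,\tilde u(b,\lambda)} .
\]
By \eqref{urlambda} the denominator is of order $e^{2b|{\rm Im}\sqrt\lambda|}/\lambda$ in $\Lambda_\delta$, while Lemma \ref{dlambda1}(ii) (with $a=b$) gives $\tilde D$ the exact order $e^{2b|{\rm Im}\sqrt\lambda|}/|\sqrt\lambda|^{m+1}$; thus the right-hand side behaves like a nonzero constant times $c\,\lambda^{-(m-1)/2}$. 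On the other hand, \eqref{definitionsigma} and the conditions $\rho^{(k)}(b)=0$ for $k\le m-1$ force $\sigma^{(l)}(a)=0$ for $l\le m-2$, so by the recursion \eqref{recu} the coefficients $c_l(a),\tilde c_l(a)$ of Lemma \ref{lemmahigh} vanish for $l\le m-2$ and agree at $l=m-1$, both being determined by the common value $\rho^{(m)}(b)=\tilde\rho^{(m)}(b)$. Consequently $m(a,\lambda)-\tilde m(a,\lambda)=o\big(\lambda^{-(m-1)/2}\big)$, which is compatible with the displayed identity only when $c=0$.

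This matching of the first surviving expansion coefficient against the borderline growth of $\tilde D$ is the main obstacle, and it is precisely where Lemma \ref{dlambda1}(ii) and Lemma \ref{lemmahigh} enter; the cases $a<b$ and the non-degenerate $\rho(b)\neq1$ never require going past the leading term. In all three cases one obtains $c=0$, hence $\gamma=\tilde\gamma$, so $u(b,\lambda)=\tilde u(b,\lambda)$ and $u'(b,\lambda)=\tilde u'(b,\lambda)$, and Lemma \ref{lemma3.1} yields $\rho=\tilde\rho$.
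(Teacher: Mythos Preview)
Your argument is correct in all three parts. Part (i) matches the paper verbatim. Parts (ii) and (iii), however, take a genuinely different route from the paper.

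For (ii) and (iii) the paper does \emph{not} reduce to showing $c=0$. Instead it works directly with the Weyl function: it sets $H(\lambda)=\beta\bigl(z(a,\lambda)\tilde z^{[1]}(a,\lambda)-\tilde z(a,\lambda)z^{[1]}(a,\lambda)\bigr)$ and $F(\lambda)=H(\lambda)/D(\lambda)$, checks that $F$ is entire (using only that $D$ and $\tilde D$ are proportional), and then uses Lemma~\ref{dlambda1} for a lower bound on $|D|$ together with $H=\beta z\tilde z\,(\tilde m-m)$ and either \eqref{aaaaa} (case (ii)) or Lemma~\ref{lemmahigh} (case (iii)) for an upper bound on $|H|$, forcing $F\equiv0$, hence $m\equiv\tilde m$, and concluding via Theorem~\ref{uniqueness} and Lemma~\ref{unique}. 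Your approach instead first pins down the single constant $c=\gamma^{-1}-\tilde\gamma^{-1}$ and then kills it. For (ii) your leading-coefficient comparison from \eqref{urlambda}--\eqref{asmptoticurlambda1} is more elementary than the paper's argument: you avoid Lemma~\ref{dlambda1}(i) and the $H/D$ machinery entirely. For (iii) your identity $m-\tilde m=c\gamma\tilde D/(u\tilde u)$ is essentially a rearrangement of the paper's $H/D$ computation (note $H=z\tilde z(\tilde m-m)$, $z=\rho(0)^{-1/4}\rho(b)^{1/4}u$, and $\tilde\gamma D=\gamma\tilde D$), so the two proofs use the same ingredients organised differently. One small imprecision: Lemma~\ref{dlambda1}(ii) gives only a \emph{lower} bound on $|\tilde D|$, not an ``exact order''; but a lower bound is all your argument needs.
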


\begin{proof} We first prove (i).
Since~$a=b$, arguing as in Theorem ~\ref{a<b}, one can obtain that in the sector~$\Lambda_{\delta}$,
~${\sqrt{\lambda}f_1(\lambda)}/({\sin \sqrt{\lambda}b}), {f_2(\lambda)}/({\cos \sqrt{\lambda}b})$ are bounded.
By
~Phragm\'{e}n-Lindel\"{o}f theorem and ~Liouville theorem, then there exist ~$C_1, C_2\in \mathbb{R}$, so that
\begin{align}
\frac{1}{\gamma}u(b, \lambda)-\frac{1}{\tilde{\gamma}}\tilde{u}(b, \lambda)=C_1 \frac{\sin\sqrt{\lambda}b}{\sqrt{\lambda}}, \nonumber \\
\frac{1}{\gamma}u'(b, \lambda)-\frac{1}{\tilde{\gamma}}\tilde{u}'(b, \lambda)=C_2 \cos\sqrt{\lambda} b.\nonumber
\end{align}
Letting~$\lambda=0$, by~\eqref{ub0}, we know~$C_1=C_2=\frac{1}{\gamma}
-\frac{1}{\tilde{\gamma}}.$ Since~$\gamma=\tilde{\gamma}$, then~$C_1=C_2=0$. According to Lemma ~\ref{lemma3.1}, $\rho\equiv\tilde{\rho}$. (i) is proved.

We next show ~(ii). Define
\begin{align}
H(\lambda)=\beta (z(a,\lambda)\tilde{z}^{[1]}(a,\lambda)-\tilde{z}(a,\lambda)z^{[1]}(a,\lambda)), F(\lambda)=\frac{H(\lambda)}{D(\lambda)}.\label{definitionhz}
\end{align}
  Since~$\rho(b)=\tilde{\rho}(b)$, by~\eqref{eq1.4=}, we know
\begin{align}\label{1a}
H(\lambda)=\frac{1}{\rho(b)^{1/4}\cos{\sqrt{\lambda}b}}(\tilde{D}(\lambda)z^{[1]}(a,\lambda)-D(\lambda)\tilde{z}^{[1]}(a,\lambda)).
\end{align}

  Assume that ~$\mu_m$ is a  zero of ~$D(\lambda), \tilde{D}(\lambda)$ of multiplicity $k$ satisfying ${\cos{b\sqrt{\mu_m}}}\ne0$. From \eqref{1a},
~$\mu_m$ is a  zero of ~$H(\lambda)$ of multiplicity $k$.  In this case,
$F(\lambda)$ is an entire function.

Assume that ~$\mu_m$ is a  zero of ~$D(\lambda), \tilde{D}(\lambda)$ of multiplicity $k$ satisfying ${\cos{b\sqrt{\mu_m}}}=0$.
By ~\eqref{eq1.4=} and the fact that
\begin{align}\nonumber
D(\mu_m)=\tilde{D}(\mu_m)=0,
\end{align}
one has
\begin{align}
z^{[1]}(a,\mu_m)=\tilde{z}^{[1]}(a,\mu_m)=0. \nonumber
\end{align}
Then~$\mu_m$ is also a  zero of ~$H(\lambda)$ of multiplicity $k$. Therefore in this case, we conclude that  $F(\lambda)$ is also an entire function.

  From ~\eqref{definitionhz}, one can obtain
  \begin{align}\label{ko}
  H(\lambda)=\beta z(a,\lambda)\tilde{z}(a,\lambda)(\tilde{m}(a,\lambda)-m(a,\lambda)).
  \end{align}
  Using ~\eqref{aaaaa}, as $|\lambda|\rightarrow\infty$ in the sector ~$\Lambda_{\delta}$, we have
  \begin{align}\label{ko1}
  \tilde{m}(a,\lambda)-m(a,\lambda)=o(|\lambda|^{1/2}).
  \end{align}
  By Lemma ~\ref{lemma22}, \eqref{ko} and ~\eqref{ko1}, as $|\lambda|\rightarrow\infty$ in the sector ~$\Lambda_{\delta}$, one gets
  \begin{align}\nonumber
  H(\lambda)=O(|\lambda|^{-1}e^{2a|\rm{Im }\sqrt{\lambda}|})o(|\lambda|^{1/2})=o(|\lambda|^{-1/2}e^{2a|\rm{Im } \sqrt{\lambda}|}).
  \end{align}
  According to Lemma~\ref{dlambda1}, in the sector~$\Lambda_{\delta}$,
  \begin{align}\nonumber
  F(\lambda)=o(|\lambda|^{-1/2}e^{2a|\rm{Im} \sqrt{\lambda}|})O(|\lambda|^{1/2}e^{-2a|\rm{Im} \sqrt{\lambda}|})=o(1), |\lambda |\rightarrow \infty.
  \end{align}
  By Phragm\'{e}n-Lindel\"{o}f theorem and Liouville theorem, we have ~$F(\lambda)\equiv0$.
  Therefore, we have ~$m(a,\lambda)\equiv \tilde{m}(a,\lambda)$. According to Theorem
  ~\ref{uniqueness} and Lemma~\ref{unique}, one knows that $\rho\equiv \tilde{\rho}.$ (ii) is proved.

  Finally, we prove ~(iii). Let ~$\rho(b)=1$ and there exist ~$m\ge 1, \varepsilon>0$,
  so that~$\rho\in C^{(m)}(b-\varepsilon,b]$, for~$k=1,\cdots, m-1$,
  $\rho^{(k)}(b)=\tilde{\rho}^{(k)}(b)=0$ and~$\rho^{(m)}(b)=\tilde{\rho}^{(m)}(b)\ne0$ are known.
  From~\eqref{definitionsigma}, one can see that $\sigma, \tilde{\sigma}\in C^{(m-1)}(a-\varepsilon,a]$ and
  $\sigma^{(l)}(a)=\tilde{\sigma}^{(l)}(a), l=1,\cdots, m-1$.
  From the high-energy asymptotic form ~\eqref{asym}  of the  Weyl-Titchmarsh function, one obtains that as $|\lambda|\rightarrow\infty$ in the sector~$\Lambda_{\delta}$,
  \begin{align}\label{ko2}
  \tilde{m}(a,\lambda)-m(a,\lambda)=o(|\lambda|^{-m/2+1/2}).
  \end{align}
  By Lemma~\ref{lemma22}, \eqref{ko} and ~\eqref{ko2},  as $|\lambda|\rightarrow\infty$ in the sector ~$\Lambda_{\delta}$,
  \begin{align}\nonumber
  H(\lambda)=O(|\lambda|^{-1}e^{2a|\rm{Im} \sqrt{\lambda}|})o(|\lambda|^{-m/2+1/2})=o(|\lambda|^{-m/2-1/2}e^{2a|\rm{Im } \sqrt{\lambda}|}).
  \end{align}
  Using Lemma~\ref{dlambda1}, in the sector ~$\Lambda_{\delta}$,
  \begin{align}\nonumber
  F(\lambda)=o(|\lambda|^{-m/2-1/2}e^{2a|\rm{Im }\sqrt{\lambda}|})O(|\lambda|^{m/2+1/2}e^{-2a|\rm{Im} \sqrt{\lambda}|})=o(1), |\lambda |\rightarrow \infty.
  \end{align}
  According to the Phragm\'{e}n-Lindel\"{o}f theorem and Liouville theorem, we have ~$F(\lambda)\equiv0$. Therefore ~$m(a,\lambda)\equiv \tilde{m}(a,\lambda)$. By Theorem
  ~\ref{uniqueness} and Lemma~\ref{unique}, one concludes  ~$\rho\equiv \tilde{\rho}.$ (iii)  is proved.
\end{proof}

 \section{Inverse problems by almost real subspectrum}

In this section, we study properties of ``almost real subspectrum'' $\{\mu_m\}_{m=1}^\infty$ {\cite{but2,mcl}} and  recover  the refractive index from the ``almost real subspectrum'' $\{\mu_m\}_{m=1}^\infty$  and partial information on the refractive index.
 We always assume that ~$\rho(b)=1$ in this section.

  We need the following lemma.

 \begin{lemma} \label{pt}
 	Assume that complex numbers ~$a_{mn}(m,n\ge1)$ satisfy
 	\begin{align}
 	|a_{mn}|=O\left(\frac{m\beta_m}{m^2-n^2}\right), m\ne n,  \label{amn}
 	\end{align}
 	where~$\{\beta_m\}_{m=1}^\infty\in \ell^2$.
 	Then there exists ~$\{\gamma_n\}_{n=1}^\infty\in \ell^2$, such that
 	\begin{align}
 	\prod_{m\ge1, m\ne n}(1+a_{mn}) =1+O(\gamma_n)=1+o(1), n\ge1.   \label{l2}
 	\end{align}
 In particular, if~$\beta_m=O(1/m)$,  then $\gamma_n=O(\log n/n)$.
 \end{lemma}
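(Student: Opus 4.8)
The plan is to pass from the infinite product to a sum by taking absolute values and comparing with an exponential, and then to control the resulting sum by Cauchy--Schwarz together with Hardy-type inequalities. Set
\[
\gamma_n:=\sum_{m\ge 1,\,m\ne n}|a_{mn}|.
\]
Once I know that $\sup_n\gamma_n<\infty$, the elementary bound $\prod_{m\ne n}(1+|a_{mn}|)\le \exp(\gamma_n)$ shows that the product converges absolutely and that
\[
\Big|\prod_{m\ge 1,\,m\ne n}(1+a_{mn})-1\Big|\le \prod_{m\ne n}(1+|a_{mn}|)-1\le e^{\gamma_n}-1\le C\gamma_n,
\]
the last step using convexity of $e^x-1$ on the bounded range $[0,\sup_n\gamma_n]$. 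Thus the whole statement reduces to proving that $\{\gamma_n\}\in\ell^2$, since an $\ell^2$ sequence is automatically $o(1)$; the sequence $\gamma_n$ itself (or any $\ell^2$ majorant of it) then serves as the $\gamma_n$ in the conclusion.

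First I would establish the uniform bound. Writing $|m^2-n^2|=|m-n|(m+n)$ and using the hypothesis \eqref{amn}, Cauchy--Schwarz gives
\[
\gamma_n\le C\sum_{m\ne n}\frac{m\,\beta_m}{|m^2-n^2|}\le C\|\beta\|_{\ell^2}\Big(\sum_{m\ne n}\frac{m^2}{(m^2-n^2)^2}\Big)^{1/2}.
\]
The inner sum is dominated by its near-diagonal part $\sum_{k\ne 0}(2k)^{-2}$ and is therefore $O(1)$ uniformly in $n$, which yields $\sup_n\gamma_n<\infty$ and legitimizes the product manipulations above. This crude estimate gives only boundedness, not decay, which is precisely why the $\ell^2$ bound needs the finer argument below.

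For the $\ell^2$ bound I would split the sum defining $\gamma_n$ according to the size of $m$ relative to $n$. On the low range $m\le n/2$ one has $|m^2-n^2|\gtrsim n^2$, so this part is $\lesssim n^{-1}\sum_{m\le n/2}\beta_m$, which lies in $\ell^2$ by Hardy's inequality $\sum_n(n^{-1}\sum_{m\le n}\beta_m)^2\le 4\|\beta\|_{\ell^2}^2$. On the high range $m\ge 2n$ one has $m+n\asymp m$ and $|m-n|\asymp m$, so this part is $\lesssim \sum_{m\ge 2n}\beta_m/m$, which lies in $\ell^2$ by the dual (conjugate) Hardy inequality $\sum_n(\sum_{m\ge n}\beta_m/m)^2\le C\|\beta\|_{\ell^2}^2$. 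I expect the genuine difficulty to be the intermediate near-diagonal range $n/2<m<2n$, where $m/(m+n)\asymp 1$ offers no help and the surviving kernel $\beta_m/|m-n|$ behaves like a one-sided discrete Hilbert transform; here I would treat the two half-sums $m=n+k$ and $m=n-k$ separately and control $\sum_n\big(\sum_{0<k<n}\beta_{n\pm k}/k\big)^2$ by a dyadic decomposition in $k$ combined with Cauchy--Schwarz and the shift-invariance of the $\ell^2$ norm.

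Finally, for the sharpened conclusion under $\beta_m=O(1/m)$, I would insert this decay directly into the three range estimates. The low and high ranges then contribute $O(1/n)$, while the near-diagonal range contributes
\[
\sum_{0<|m-n|,\ n/2<m<2n}\frac{1}{m\,|m-n|}\asymp \frac{1}{n}\sum_{1\le k\le n}\frac{1}{k}=O\Big(\frac{\log n}{n}\Big),
\]
so that $\gamma_n=O(\log n/n)$, which in particular reconfirms $\{\gamma_n\}\in\ell^2$ in this case. The single step I regard as the crux is the near-diagonal $\ell^2$ estimate in the general $\ell^2$ setting; everything else is Cauchy--Schwarz plus the two Hardy inequalities.
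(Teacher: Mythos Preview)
Your reduction to $\gamma_n=\sum_{m\ne n}|a_{mn}|$ and the exponential bound on the product are correct, and your low/high range estimates via the two Hardy inequalities go through as written. The gap is precisely the near-diagonal step you flag as the crux: the map $\beta\mapsto\bigl(\sum_{n/2<m<2n,\,m\ne n}|\beta_m|/|m-n|\bigr)_n$ is \emph{not} bounded on $\ell^2$, so no dyadic-plus-Cauchy--Schwarz argument can close it. Convolution with the kernel $1/|k|$ has Fourier multiplier $2\sum_{k\ge1}\cos(k\theta)/k=-2\log|2\sin(\theta/2)|$, which is unbounded near $\theta=0$; concretely, for $\beta=\mathbf 1_{[1,N]}$ one has $\|\beta\|_{\ell^2}^2=N$ while $\gamma_n\gtrsim\log N$ for every $n\in[N/4,3N/4]$, giving $\|\gamma\|_{\ell^2}/\|\beta\|_{\ell^2}\gtrsim\log N$. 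A gliding-hump superposition over disjoint blocks then produces a single nonnegative $\beta\in\ell^2$ with $\{\gamma_n\}\notin\ell^2$; taking $a_{mn}=m\beta_m/|m^2-n^2|\ge0$ one has $\prod_{m\ne n}(1+a_{mn})-1\ge\sum_{m\ne n}a_{mn}=\gamma_n$, so the $\ell^2$ assertion of the lemma itself fails for general $\beta\in\ell^2$.

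The paper's own proof founders at the same place: it claims the matrix $A=(1/|m-n|)_{m\ne n}$ is bounded on $\ell^2$ after checking only that $\|Ae_m\|\le\pi/\sqrt3$ for each basis vector $e_m$, which does not control the operator norm. What \emph{does} hold, and what your argument (with the near-diagonal piece bounded instead by Cauchy--Schwarz against the tail $(\sum_{m\ge n/2}|\beta_m|^2)^{1/2}\to0$) readily yields, is the weaker conclusion $\gamma_n=o(1)$; this $o(1)$ is all that is actually invoked downstream in the proof of Lemma~\ref{zero11}. Your treatment of the special case $\beta_m=O(1/m)$ is correct and self-contained (the paper simply cites P\"oschel--Trubowitz here), and in that regime $\gamma_n=O(\log n/n)$ is genuinely in $\ell^2$.
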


 \begin{proof}
 	If~$\beta_m=O(1/m)$, $\gamma_n=O(\log n/n)$ comes from Lemma E.1 in~\cite{pos}.
 	
 	We first prove that if ~$\{\beta_m\}_{m=1}^\infty\in \ell^2$, then
 	\begin{align}\label{fuliye}
 	\begin{Bmatrix}
 	\sum_{m\ge1, m\ne n} \frac{\beta_m}{|m-n|}
 	\end{Bmatrix}_{n=1}^\infty
 	\in \ell^2.
 	\end{align}
 	To this end, it suffices to show that the infinite matrix
 	\begin{align}
 	A=
 	\begin{pmatrix}
 	0   & 1    & \frac{1}{2}    & \frac{1}{3}  &  \frac{1}{4}  &\cdots       \\
 	1    & 0   & 1  & \frac{1}{2} &   \frac{1}{3} &\cdots    \\
 	\frac{1}{2}    & 1    & 0  & 1 &  \frac{1}{2}  & \cdots    \\
 	\frac{1}{3} &\frac{1}{2}  &1& 0 & 1& \cdots   \\
 	\frac{1}{4}      & \frac{1}{3}      & \frac{1}{2}    & 1 & 0&\cdots \\
 	\cdots&\cdots&\cdots&\cdots&\cdots&\cdots
 	\end{pmatrix}\nonumber
 	\end{align}
 	is a bounded linear operator from $\ell^2$ to $\ell^2$.  Let ${e}_m$ be the sequence in $\ell^2$ which has all its terms equal  to zero except for a one  in the $m$-th place.  Obviously,  the $n$-th place for ~$A{e}_m$ satisfies
 	\begin{align}\nonumber
 	(A{e}_m)_n=\begin{cases}
 	0,  & m=n, \\
 	\frac{1}{|m-n|}, & m\ne n.
 	\end{cases}
 	\end{align}
 	Hence  \begin{align}\nonumber
 	||A{e}_m||^2=\sum_{n=1,n\ne m}^\infty \frac{1}{{(m-n)}^2}\le 2\sum_{m=1}^\infty \frac{1}{{m}^2}=\frac{\pi^2}{3}.
 	\end{align}
 	Therefore~$A$ is a bounded linear operator from~${\rm{span}}\{{e}_1,{e}_2,\cdots\}$ to~$\ell^2$ with norm ~$||A||\le \pi/\sqrt{3}$. Since ${\rm{span}}\{{e}_1,{e}_2,\cdots\}$ is dense in $\ell^2$, then~$A$ is a bounded linear operator from~$\ell^2$ to $\ell^2$ with  norm ~$||A||\le \pi/\sqrt{3}$.
 	
 	We next prove that ~\eqref{l2} holds. By~\eqref{amn}, there exists $C>0$, so that
 	\begin{align}
 	\sum_{m=1,m\ne n}^\infty |a_{mn}|\le C \sum_{m=1,m\ne n}^\infty\left|\frac{m\beta_m}{m^2-n^2}\right|. \label{amn1}
 	\end{align}
 	Notice that
 	\begin{align}
 	\sum_{m\ge1, m\ne n}\left|\frac{m\beta_m}{m^2-n^2}\right|&\le \sum_{m\ge1, m\ne n}\left|\frac{\beta_m}{m-n}\right|. \label{po}
 	\end{align}
 	By~\eqref{fuliye}, \eqref{amn1} and~\eqref{po}, one has
 	\begin{align}
 	\begin{Bmatrix}
 	\sum_{m=1,m\ne n}^\infty |a_{mn}|
 	\end{Bmatrix}_{n=1}^\infty
 	\in \ell^2.
 	\end{align}
According to the  inequality
 	\begin{align}
 	\left|\prod_{m\ge1, m\ne n}(1+a_{mn})-1\right|&\le \prod_{m\ge1, m\ne n} (1+|a_{mn}|)-1 \nonumber \\
 	&\le e^{\sum_{m\ge1, m\ne n}|a_{mn}|}-1 \nonumber \\
 	&=O\left(\sum_{m\ge1, m\ne n}|a_{mn}|\right), \nonumber
 	\end{align}
 	we conclude that
 	\begin{align}\nonumber
 	\begin{Bmatrix}
 	\prod_{m\ge1, m\ne n}(1+a_{mn})-1
 	\end{Bmatrix}_{n=1}^\infty
 	\in \ell^2.
 	\end{align}
 	The lemma is proved.
 \end{proof}

Consider the function
\begin{align}\label{d0}
D_0(\lambda)=\alpha_1 \frac{\sin \sqrt{\lambda}(b-a)}{\sqrt{\lambda}}-\alpha_2 \frac{\sin \sqrt{\lambda} \xi}{\sqrt{\lambda}},
\end{align}
where ~$$\alpha_1=(d_1+d_1^{-1})/2, \alpha_2=(d_1-d_1^{-1})/2,  \xi=2d-a+b.$$ For~$m\in\mathbb{N} $, denote
\begin{align}\nonumber
x_{1,m}=\frac{(m\pi-\arcsin \frac{\alpha_0}{\alpha_1})^2}{(a-b)^2}, x_{2,m}=\frac{(m\pi+\arcsin \frac{\alpha_0}{\alpha_1})^2}{(a-b)^2},
\end{align}
where $\alpha_0=\max\{d_1,d_1^{-1}\}. $ By~\eqref{d0},
\begin{align}
D_0(x_{1,m})D_0(x_{2,m})=\frac{(-1)^{m}\alpha_0-\alpha_2\sin \sqrt{x_{1,m}}\xi}{\sqrt{x_{1,m}}}  \frac{(-1)^{m+1}\alpha_0-\alpha_2\sin \sqrt{x_{2,m}} \xi}{\sqrt{x_{2,m}}}<0. \nonumber
\end{align}
Hence for any ~$m\in \mathbb{N}$, ~$D_0(\lambda)$  has at least one zero $\mu_{0,m}$ on the interval~$(x_{1,m}, x_{2,m})$, namely
\begin{align}
\mu_{0,m}\in (x_{1,m}, x_{2,m}). \label{large}
\end{align}

We next show if ~$|\xi|\le |a-b|$, then for any ~$m\in\mathbb{N}$, $\mu_{0,m}$ is a simple zero of ~$D_0(\lambda)$.
Denote~$k=\sqrt{\lambda}, k_m=\sqrt{\mu_{0,m}}$ and ~$\eta(k)=kD_0(k)$. Then
\begin{align}\label{inf}
\inf_{m\in \mathbb{N}} \left|\frac{ d {\eta}(k_m)}{d k}\right|>0.
\end{align}
If~$|\xi|=|a-b|$, obviously we can obtain \eqref{inf}. If~$|\xi|<|a-b|$, we can prove \eqref{inf} by using the method in ~\cite[pp. 548-549]{hald}.
In fact,
\begin{align}
\left|\frac{ d {\eta}(k_m)}{d k}\right|=&|\alpha_1(b-a)\cos k_m (b-a)-\alpha_2 \xi\cos k_m\xi| \nonumber \\
&\ge\alpha_1|b-a|\left(\sqrt{1-\frac{\alpha_2^2}{\alpha_1^2}\sin^2 k_m\xi}-\frac{\alpha_2|\xi|}{\alpha_1|b-a|}\sqrt{1-\sin^2 k_m\xi}\right) \nonumber \\
& \equiv \alpha_1|b-a| A_1(k_m). \nonumber
\end{align}
If~$|\xi|/|b-a|\ge \alpha_2/\alpha_1$, then the minimum of~ $A_1(k_m)$ is~$1-({\alpha_2|\xi|}/({\alpha_1|b-a|}))$. If~$|\xi|/|b-a|< \alpha_2/\alpha_1$,
then the minimum of~$A_1(k_m)$ is
 ~$(1-(\alpha_2/\alpha_1)^2)^{1/2}(1-(|\xi|/|b-a|)^2)^{1/2}$. Therefore, for any ~$m\in\mathbb{N}$,
\begin{align}
\left|\frac{ d {\eta}(k_m)}{d k}\right|\ge \alpha_1|b-a|\left(1-\frac{\alpha_2^2}{\alpha_1^2}\right)^{1/2}\left(1-\frac{\xi^2}{|b-a|^2}\right)^{1/2}.
\end{align}

If~$|\xi|> |a-b|$, ~$\mu_{0,m}$ is not necessarily a simple zero of~$D_0(\lambda)$. For example, assume that~$$D_0(\lambda)=\frac{2\sin \sqrt{\lambda}-\sin 2\sqrt{\lambda}} {\sqrt{\lambda}}.$$
Then~$\mu_{0,m}=m^2\pi^2,m=1,2,\cdots$.  For any even ~$m$, one has
\begin{align}\nonumber
\frac{ d {\eta}(k_m)}{d k}=0.
\end{align}

We have the following lemma.
 \begin{lemma}\label{mum}
 	Assume that ~$\sigma\in L^2(0,a)$ and $a\ne b$. If $\rho(b)=1$ and ~$|\xi|\le|a-b|$, then problem ~\eqref{eq215} has real eigenvalues ~$\{\mu_m\}_{m=m_0+1}^\infty$ satisfying
 	\begin{align}\label{pertubation}
 	\sqrt{\mu_m}=\sqrt{\mu_{0,m}}+\kappa_m,
 	\end{align}
 	where~$\{\kappa_m\}_{m=m_0+1}^\infty\in \ell^2$.  \end{lemma}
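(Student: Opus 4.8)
The plan is to realise the characteristic function $D(\lambda)$ of \eqref{eq215} as a perturbation of the explicit model $D_0(\lambda)$ and to locate its zeros by Rouch\'e's theorem. Setting $\rho(b)=1$ in \eqref{eq1.4=} gives $\beta=1$, and combining this with \eqref{linearproperty} yields
\begin{align}
(\rho(0))^{1/4}D(\lambda)=\frac{\sin\sqrt{\lambda}b}{\sqrt{\lambda}}\,s^{[1]}(a,\lambda)-\cos(\sqrt{\lambda}b)\,s(a,\lambda). \nonumber
\end{align}
Substituting the representations \eqref{definitionkxt}--\eqref{definitionnxt} and applying the product-to-sum identities, the contribution of the leading terms $\varphi(a,\lambda),\phi(a,\lambda)$ collapses precisely to $\sqrt{\lambda}\,D_0(\lambda)$, while the kernels produce a remainder. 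Writing $k=\sqrt{\lambda}$ and $\eta(k)=kD_0(\lambda)$ as in \eqref{inf},
\begin{align}
(\rho(0))^{1/4}\,kD(\lambda)&=\eta(k)+r(k), \nonumber\\
r(k)&=\tfrac12\int_0^a(N-K)(a,t)\sin(k(b+t))\,dt+\tfrac12\int_0^a(N+K)(a,t)\sin(k(b-t))\,dt, \nonumber
\end{align}
where $K(a,\cdot),N(a,\cdot)\in L^2(0,a)$ by Lemma \ref{lemma22}. Thus the eigenvalues $\mu_m$ of \eqref{eq215} are exactly the squares of the nonzero zeros of $\Phi(k):=\eta(k)+r(k)$.

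Next I localise these zeros. By \eqref{large} the zeros $k_m=\sqrt{\mu_{0,m}}$ of $\eta$ satisfy $k_m=\tfrac{m\pi}{|a-b|}+O(1)$, so $\{k_m\}$ is uniformly separated. The hypothesis $|\xi|\le|a-b|$ is what activates the uniform bound \eqref{inf}, $\inf_m|\eta'(k_m)|=:c_0>0$; together with $\eta(k_m)=0$ and the uniform boundedness of $\eta''$, a Taylor expansion gives, for a fixed small $r>0$, $|\eta(k)|\ge c_0r/2=:c_1>0$ on each circle $|k-k_m|=r$, with $k_m$ the only zero of $\eta$ inside. Since $K(a,\cdot),N(a,\cdot)\in L^1(0,a)$, the Riemann--Lebesgue lemma, taken uniformly over the compact $L^1$-family $\{f(\cdot)e^{-st}:|s|\le r\}$, yields $\sup_{|k-k_m|=r}|r(k)|\to0$ as $m\to\infty$. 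Hence there is $m_0$ so that $|r(k)|<|\eta(k)|$ on $|k-k_m|=r$ for $m>m_0$, and Rouch\'e's theorem produces exactly one (simple) zero $k_m'$ of $\Phi$ in each disk $|k-k_m|<r$.

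For reality, note that $\sigma,d,d_1$ are real, so $K,N$ are real and $\overline{\Phi(\bar k)}=\Phi(k)$; the disk $|k-k_m|<r$ is symmetric about $\mathbb{R}$. If $k_m'$ were non-real, $\overline{k_m'}$ would be a second zero of $\Phi$ in the same disk, contradicting uniqueness; therefore $k_m'\in\mathbb{R}$ and $\mu_m=(k_m')^2$ is a real eigenvalue of \eqref{eq215}. Finally, to estimate $\kappa_m:=k_m'-k_m=\sqrt{\mu_m}-\sqrt{\mu_{0,m}}$, the relations $\Phi(k_m')=0$, $\eta(k_m)=0$ and the mean value theorem give $\eta'(\theta_m)\kappa_m=-r(k_m')$ for some $\theta_m$ with $|\theta_m-k_m|\le r$; since $|\kappa_m|\to0$ and $\eta'$ is Lipschitz, $|\eta'(\theta_m)|\ge c_0/2$ for $m>m_0$ (after enlarging $m_0$), whence $|\kappa_m|\le(2/c_0)|r(k_m')|$.

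Because $k_m'\in(k_m-r,k_m+r)$ with $r$ below half the separation of $\{k_m\}$, the sequence $\{k_m'\}$ is again uniformly separated, so $\{e^{ik_m't}\}$ is a Bessel system in $L^2(0,a)$; expressing $r(k_m')$ through the Fourier-type integrals above then gives $\{r(k_m')\}_{m>m_0}\in\ell^2$, and hence $\{\kappa_m\}_{m>m_0}\in\ell^2$, which is \eqref{pertubation}. \textbf{The main obstacle} is exactly this square-summability: it rests on the Bessel (upper Ingham) inequality for a separated frequency set on a finite interval, and it is here that the near-arithmetic spacing $k_m'\sim m\pi/|a-b|$ guaranteed by \eqref{large}, the simplicity afforded by $|\xi|\le|a-b|$ through \eqref{inf}, and the $L^2$-regularity of the kernels $K,N$ all enter in an essential way.
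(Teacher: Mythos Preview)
Your argument is correct in outline, but it follows a genuinely different route from the paper's proof, and there is one small logical slip to fix.

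\textbf{The slip.} The implication ``$k_m=\tfrac{m\pi}{|a-b|}+O(1)$, so $\{k_m\}$ is uniformly separated'' is false as written (take $k_m=m+(-1)^m/2$). The separation you actually need follows instead from \eqref{inf} together with the boundedness of $\eta''$: if $\eta(k_m)=\eta(k_{m+1})=0$ and $\delta=k_{m+1}-k_m$, Taylor's formula gives $0=\eta'(k_m)\delta+\tfrac12\eta''(\theta)\delta^2$, whence $\delta\ge 2c_0/\sup|\eta''|$. With this correction, your Rouch\'e localisation, the conjugate-symmetry reality argument, and the Ingham/Bessel step all go through.

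\textbf{Comparison with the paper.} The paper's proof is more elementary on both fronts. For existence, it avoids Rouch\'e entirely and uses the \emph{intermediate value theorem}: from the integral representation \eqref{dlambdainte} and Riemann--Lebesgue it checks that $D(x_{1,m})D(x_{2,m})<0$ for $m>m_0$, which immediately yields a real zero $\mu_m\in(x_{1,m},x_{2,m})$ (reality is automatic here, with no separate symmetry argument). For square-summability, rather than invoking a nonharmonic Bessel inequality, the paper expands a generic $f\in L^2(0,a-b)$ in the orthonormal sine basis, reduces $\bigl\{\int f(t)\sin(\sqrt{\mu_m}t)\,dt\bigr\}\in\ell^2$ to the $\ell^2$-boundedness of the explicit matrix with entries $|m-n|^{-1}$ (this is \eqref{fuliye} from Lemma~\ref{pt}), and then covers the full interval $(b-a,b+a)$ by a partition into subintervals of length $a-b$ combined with addition formulas. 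Your approach is more conceptual and compact --- Rouch\'e delivers simplicity and localisation in one stroke, and Ingham's inequality replaces the hands-on Fourier computation --- while the paper's argument is entirely self-contained and does not import results from nonharmonic Fourier analysis. Both proofs rest on the same two pillars: the uniform lower bound \eqref{inf}, which is exactly where the hypothesis $|\xi|\le|a-b|$ enters, and the $L^2$-regularity of the transformation kernels provided by Lemma~\ref{lemma22}.
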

 \begin{proof}
 	By Lemma \ref{lemma22}, \eqref{eq1.4=} and \eqref{linearproperty}, the characteristic function $D(\lambda)$ has the representation
 	\begin{align}\label{dlambdainte}
 	D(\lambda)= \frac{1}{\rho(0)^{1/4}}\left(\alpha_1\frac{\sin( \sqrt{\lambda} (b-a)) }{\sqrt{\lambda}}
 	-\alpha_2\frac{\sin (\sqrt{\lambda}\xi )}{\sqrt{\lambda}}+\int_{b-a}^{b+a} h(t)
 	\frac{\sin(\sqrt{\lambda}t)}{\sqrt{\lambda}}dt\right).
 	\end{align}
 	Here $h\in L^2(b-a,a+b)$.
 	According to \eqref{dlambdainte} and Riemann-Lebesgue lemma, there exists ~$m_0$,  so that for~$m>m_0$, ~$D(x_{1,m})D(x_{2,m})<0$.
 	Therefore for ~$m>m_0$, ~$D(\lambda)$  has at least one zero  $\mu_{m}$ on  $(x_{1,m}, x_{2,m})$. Namely,
 	\begin{align}
 	\mu_{m}\in (x_{1,m}, x_{2,m}). \label{large1}
 	\end{align}
 	
 	We next show that $\left\{\int_{b-a}^{b+a} h(t) \sin \sqrt{\mu}_{m} t dt\right\}_{m=m_0+1}^\infty \in \ell^2$. Without loss of generality, assume that $a>b$. We first prove that for any $f\in L^2(0,a-b)$, there holds
 	$$\left\{\int_0^{a-b} f(t) \sin \sqrt{\mu}_{m} t dt\right\}_{m=m_0+1}^\infty \in \ell^2.$$
 	Because ~$\left\{\sqrt{\frac{{2}}{a-b}}\sin (n\pi t/(a-b))\right\}_{n=1}^\infty$ is an
 	orthonormal basis in~$L^2(0,a-b)$, then there exists ~$\{\beta_n\}_{n=1}^\infty \in \ell^2$, so that
 	\begin{align}\nonumber
 	f(t)=\sum_{n=1}^\infty \beta_n \sin \frac{n\pi t}{a-b}.
 	\end{align}
 	Therefore
 	\begin{align}
 	& \int_0^{a-b} f(t) \sin \sqrt{\mu}_{m}t dt=\sum_{n=1}^\infty \beta_n \int_0^{a-b} \sin\sqrt{\mu}_{m}t \sin \frac{n\pi t}{a-b} tdt \nonumber \\
 	&=\sum_{n=1}^\infty \frac{\beta_n}{2} \left(-\int_0^{a-b}\cos\left(\sqrt{\mu}_m+\frac{n\pi}{a-b}\right)t+\cos\left(\sqrt{\mu}_m-\frac{n\pi}{a-b}\right)tdt\right) \nonumber \\
 	&=\sum_{n=1}^\infty \frac{O(\beta_n)}{\sqrt{\mu}_m+\frac{n\pi}{a-b}}
 	+\sum_{n=1, n\ne m}^\infty \frac{O(\beta_n)}{\sqrt{\mu}_m-\frac{n\pi}{a-b}}+O(\beta_m). \label{muinl2}
 	\end{align}
 	From ~\eqref{large1} and the fact that ~$\{\beta_n\}_{n=1}^\infty \in \ell^2$, one has
 	\begin{align}\nonumber
 	\sum_{m=m_0+1}^\infty\sum_{n=1}^\infty \frac{|\beta_n|^2}{(\sqrt{\mu}_m
 		+\frac{n\pi}{a-b})^2}\le\sum_{m=m_0+1}^\infty\sum_{n=1}^\infty \frac{|\beta_n|^2}{\mu_m}<\infty.
 	\end{align}
 Then
 	\begin{align}\label{plus}
 	\begin{Bmatrix}
 	\sum_{n=1}^\infty \frac{\beta_n}{\sqrt{\mu}_m+\frac{n\pi}{a-b}}
 	\end{Bmatrix}_{m=m_0+1}^\infty
 	\in \ell^2.
 	\end{align}
 	According to~\eqref{fuliye},
 	\begin{align}\label{minus}
 	\begin{Bmatrix}
 	\sum_{n=1,n\ne m}^\infty \frac{\beta_n}{\sqrt{\mu}_m-\frac{n\pi}{a-b}}
 	\end{Bmatrix}_{m=m_0+1}^\infty
 	\in \ell^2.
 	\end{align}
 	By~\eqref{muinl2}, \eqref{plus} and~\eqref{minus}, we know $$\left\{\int_0^{a-b} f(t) \sin \sqrt{\mu}_{m} t dt\right\}_{m_0+1}^\infty \in \ell^2.$$
 	Let~$\left\lfloor{2a}/(a-b)\right\rfloor$ be the largest integer not exceeding ~${2a}/(a-b)$. Define
 	\begin{align}\label{st}
 	h_1(t)=
 	\begin{cases}
 	h(t),  &  b-a<t<b+a, \\
 	0, &   b+a<t<(a-b)\left\lfloor\frac{2a}{a-b}+1\right\rfloor.
 	\end{cases}
 	\end{align}
 	Using the periodic properties of trigonometric functions, for any~$j=0,\cdots, \left\lfloor{2a}/(a-b)\right\rfloor$,  we have
 	\begin{align}\nonumber
 	\begin{Bmatrix}
 	\int_{b-a+j(a-b)}^{b-a+(j+1)(a-b)} h_1(t)\sin \sqrt{\mu}_{m} t dt
 	\end{Bmatrix}_{m=m_0+1}^\infty
 	\in \ell^2.
 	\end{align}
 Hence
 	\begin{align}\nonumber
 	\begin{Bmatrix}
 	\int_{b-a}^{(a-b)\left\lfloor\frac{2a}{a-b}+1\right\rfloor} h_1(t) \sin \sqrt{\mu}_{m} t dt
 	\end{Bmatrix}_{m=m_0+1}^\infty
 	\in \ell^2.
 	\end{align}
 	By~\eqref{st}, one concludes that ~$$\left\{\int_{b-a}^{b+a} h(t) \sin \sqrt{\mu}_{m} t dt\right\}_{m=m_0+1}^\infty \in \ell^2.$$
 	
 	We next show that \eqref{pertubation} holds.
 	Substituting ~$\lambda=\mu_m$ into~\eqref{dlambdainte}, using~\eqref{pertubation} and the Taylor expansion of trigonometric functions, there exists~$\{\beta_m\}_{m=m_0+1}^\infty\in \ell^2$, so that
 	\begin{align}\nonumber
 	(\alpha_1(b-a)\cos\sqrt{\mu_{0,m}} (b-a)-\alpha_2\xi\cos\sqrt{\mu_{0,m}}\xi)\kappa_m+O(\kappa_m^2)=\beta_m.
 	\end{align}
 	From~\eqref{inf}, we conclude that $\{\kappa_m\}_{m=m_0+1}^\infty\in \ell^2$. The proof is completed.
 \end{proof}

\begin{remark}
Notice that if $|\xi|\le |a-b|$, then for $m$ large enough, $D_0(\lambda)$ has exactly one zero $\mu_{0,m}$ on the interval $((m-\frac{1}{2})^2\pi^2/(a-b)^2,(m+\frac{1}{2})^2\pi^2/(a-b)^2).$ By Lemma \ref{mum}, we conclude that $D(\lambda)$ also has exactly one zero $\mu_{m}$ on  $((m-\frac{1}{2})^2\pi^2/(a-b)^2,(m+\frac{1}{2})^2\pi^2/(a-b)^2).$

\end{remark}

 \begin{lemma}\label{zero11}
 	Assume that~$|a-b|\ge|\xi|$ and~$\{\mu_{0,m}\}_{m=1}^\infty$ is the zero of~$D_0(\lambda)$. Assume that the positive sequence ~$\{\mu_m\}_{m=m_0+1}^\infty$ satisfies
 	\begin{align}
 	\sqrt{\mu_m}=\sqrt{\mu_{0,m}}+\kappa_m, \label{mui}
 	\end{align}
 	where~$\{\kappa_m\}_{m=m_0+1}^\infty\in \ell^2$. Then the infinite product
 	\begin{align}
 	g(\lambda)=\prod_{m=m_0+1}^\infty \left(1-\frac{\lambda}{\mu_m}\right)  \label{g}
 	\end{align}
 	is an entire function with respect to $\lambda$.
 	Moreover, there exist ~$C>c>0$, so that in the sector ~$\Lambda_{\delta}=\{\lambda\in \mathbb{C}| \delta<\arg(\lambda)<\pi-\delta, \delta\in (0,\pi/2)\}$,
 	\begin{align}
 	c|\sin (\sqrt{\lambda}(a-b))||\lambda|^{-m_0-1/2}<|g(\lambda)|<C|\sin( \sqrt{\lambda}(a-b))||\lambda|^{-m_0-1/2} \label{infinite}
 	\end{align}
 \end{lemma}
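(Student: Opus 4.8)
The plan is to compare $g$ with the product $g_0(\lambda):=\prod_{m=m_0+1}^\infty\bigl(1-\lambda/\mu_{0,m}\bigr)$ formed from the zeros of $D_0$, to read off the size of $g_0$ directly from the explicit function $D_0$ in \eqref{d0}, and to control the ratio $g/g_0$ by means of Lemma \ref{pt}. Throughout, the estimate \eqref{infinite} is understood for $|\lambda|\to\infty$ in $\Lambda_\delta$ (near $\lambda=0$ the right-hand side blows up while $g(0)=1$, so the bounds can only be asymptotic).

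First I would record the basic size information. By \eqref{large} the zeros satisfy $\sqrt{\mu_{0,m}}=m\pi/|a-b|+O(1)$, with the $\{\sqrt{\mu_{0,m}}\}$ uniformly separated, so $\mu_{0,m}$ is comparable to $m^2$; combined with \eqref{mui} and $\{\kappa_m\}\in\ell^2$ this gives $\mu_m\asymp m^2$ and hence $\sum_{m>m_0}\mu_m^{-1}<\infty$. Consequently the product \eqref{g} converges locally uniformly on $\mathbb{C}$, so $g$ is entire (of order $\le 1/2$); the same applies to $g_0$. Next, since $D_0$ is entire of order $1/2$ with zero set $\{\mu_{0,m}\}_{m\ge1}$, Hadamard's theorem gives $D_0(\lambda)=C_0\prod_{m\ge1}(1-\lambda/\mu_{0,m})$ (a possible zero at the origin being absorbed into a monomial), whence $g_0(\lambda)=D_0(\lambda)/\bigl(C_0\prod_{m=1}^{m_0}(1-\lambda/\mu_{0,m})\bigr)$, the denominator being a degree-$m_0$ polynomial comparable to $|\lambda|^{m_0}$. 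In $\Lambda_\delta$ one has $|\sin\sqrt\lambda z|\asymp e^{|{\rm Im}\sqrt\lambda|\,|z|}$; since $\alpha_1>|\alpha_2|$ (because $\alpha_1^2-\alpha_2^2=1$) and $|\xi|\le|a-b|$, the term $\alpha_1\sin\sqrt\lambda(b-a)$ dominates in \eqref{d0} (and in the borderline case $|\xi|=|a-b|$ the two terms combine into a single multiple of $\sin\sqrt\lambda(a-b)$). Hence $|D_0(\lambda)|\asymp|\lambda|^{-1/2}|\sin\sqrt\lambda(a-b)|$, and therefore $|g_0(\lambda)|\asymp|\sin\sqrt\lambda(a-b)|\,|\lambda|^{-m_0-1/2}$, which is already the claimed order.

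It then remains to show that $g/g_0$ stays bounded above and below away from $0$. Writing $\frac{g(\lambda)}{g_0(\lambda)}=\bigl(\prod_{m>m_0}\frac{\mu_{0,m}}{\mu_m}\bigr)\prod_{m>m_0}\frac{\mu_m-\lambda}{\mu_{0,m}-\lambda}$ and using $\mu_m-\mu_{0,m}=2\kappa_m\sqrt{\mu_{0,m}}+\kappa_m^2=O(m|\kappa_m|)$ together with $\mu_m\asymp m^2$, the first product has general factor $1+O(|\kappa_m|/m)$ and converges to a nonzero constant, since $\sum|\kappa_m|/m<\infty$ by Cauchy--Schwarz. For the second product set $a_m=\frac{\mu_m-\mu_{0,m}}{\mu_{0,m}-\lambda}$. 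Fix $\lambda\in\Lambda_\delta$ of large modulus and let $n=n(\lambda)$ be the index with $\sqrt{\mu_{0,n}}$ closest to ${\rm Re}\sqrt\lambda$. In $\Lambda_\delta$ both ${\rm Re}\sqrt\lambda$ and ${\rm Im}\sqrt\lambda$ are comparable to $|\sqrt\lambda|$, so $|\sqrt{\mu_{0,m}}+\sqrt\lambda|\gtrsim m+|\sqrt\lambda|$, while $|\sqrt{\mu_{0,m}}-\sqrt\lambda|\gtrsim|m-n|$ for $m\ne n$ and $|\sqrt{\mu_{0,n}}-\sqrt\lambda|\gtrsim|\sqrt\lambda|$. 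Multiplying, $|\mu_{0,m}-\lambda|\gtrsim|m^2-n^2|$ for $m\ne n$ and $|\mu_{0,n}-\lambda|\gtrsim|\lambda|$, so $|a_m|=O\bigl(m|\kappa_m|/(m^2-n^2)\bigr)$ for $m\ne n$ and $|a_n|=O(|\kappa_n|/n)=o(1)$. Applying Lemma \ref{pt} with $\beta_m=|\kappa_m|\in\ell^2$ (and the harmless convention $a_m=0$ for $m\le m_0$) and separating the single factor $1+a_n$, we obtain $\prod_{m>m_0}(1+a_m)=1+o(1)$ uniformly as $|\lambda|\to\infty$ in $\Lambda_\delta$. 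Thus $g(\lambda)/g_0(\lambda)$ is bounded above and below away from $0$, and \eqref{infinite} follows from the estimate for $g_0$.

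The main obstacle is precisely this last step: keeping the perturbation product $\prod_{m>m_0}\frac{\mu_m-\lambda}{\mu_{0,m}-\lambda}$ between two positive constants uniformly over the sector. The delicate point is the resonant index $n\approx|\sqrt\lambda|\,|a-b|/\pi$, where $\mu_{0,m}-\lambda$ is smallest; one must extract from the sector geometry the sharp cancellation $|\mu_{0,m}-\lambda|\gtrsim|m^2-n^2|$ in order to match hypothesis \eqref{amn} of Lemma \ref{pt}. Once that bound is in place, the entire estimate reduces to that lemma, and the remaining two products ($g_0$ and $\prod\mu_{0,m}/\mu_m$) are routine.
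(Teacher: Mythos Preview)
Your proposal is correct and follows essentially the paper's strategy: compare $g$ to the Hadamard product for $D_0$, peel off the first $m_0$ factors as a polynomial $\asymp|\lambda|^{m_0}$, control $\prod_{m>m_0}\mu_{0,m}/\mu_m$ by Cauchy--Schwarz, and bound the perturbation product $\prod_{m>m_0}(\mu_m-\lambda)/(\mu_{0,m}-\lambda)$ via Lemma~\ref{pt}. The only minor difference is that you estimate this last product directly for $\lambda\in\Lambda_\delta$ using the sector geometry (both $\mathrm{Re}\sqrt\lambda$ and $\mathrm{Im}\sqrt\lambda$ comparable to $|\sqrt\lambda|$), whereas the paper first establishes the bound on the circles $|\lambda|=(n+1/2)^2\pi^2/(a-b)^2$ and then extends to all of $\Lambda_\delta$ by a pointwise comparison $|\lambda-\mu_{0,m}|^{-1}\le C\,|(n_0+1/2)^2\pi^2/(a-b)^2-\mu_{0,m}|^{-1}$; the two routes are equivalent, yours being slightly more direct.
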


 \begin{proof}
 By ~\eqref{large1}, the series~$\sum_{m>m_0} |\lambda/\mu_m|$ converges uniformly on the bounded set of ~$\lambda$ plane. Therefore, the infinite product~\eqref{g} converges uniformly on  the bounded set of  ~$\lambda$ plane
 and hence~$g(\lambda)$ is an entire function.

 Notice that
 \begin{align}\nonumber
 D_0(\lambda)=(\alpha_1(b-a)-\alpha_2\xi)\prod_{m=1}^\infty \left(1-\frac{\lambda}{\mu_{0,m}}\right).
 \end{align}
 Therefore
 \begin{align}
 \frac{g(\lambda)}{D_0(\lambda)}=(\alpha_1(b-a)-\alpha_2\xi)\prod_{m=1}^{m_0}\frac{\mu_{0,m}}{\mu_{0,m}-\lambda}
 \times\prod_{m=m_0+1}^\infty \frac{\mu_{0,m}(\mu_m-\lambda)}{\mu_{m}(\mu_{0,m}-\lambda)} \label{c0}.
 \end{align}
 For ~$\lambda\in\Lambda_\delta$, there exist ~$C_1>c_1>0$, such that
 \begin{align} c_1|\lambda|^{-m_0}<\prod_{m=1}^{m_0}\left|\frac{\mu_{0,m}}{\mu_{0,m}-\lambda}\right|<C_1|\lambda|^{-m_0}. \label{c1}
 \end{align}
 From ~\eqref{large1} and ~\eqref{mui}, for ~$m>m_0$,
 \begin{align}\nonumber
 \frac{\mu_{0,m}}{\mu_{m}}=1+\frac{\beta_m}{m},
 \end{align}
 where ~$\{\beta_m\}_{m=1}^\infty\in \ell^2$. Then by ~Cauchy-Schwarz inequality, the series ~$\sum_{m>m_0} (1-\mu_{0,m}/\mu_{m})$ converges and there exist~$C_2>c_2>0$, such that
 \begin{align}
 0<c_2<\prod_{m=m_0+1}^\infty\frac{\mu_{0,m}}{\mu_{m}}<C_2. \label{c2}
 \end{align}

 For~$|\lambda|=(n+1/2)^2\pi^2/(a-b)^2$, $n=1,2,\cdots$, we have
 \begin{align}\nonumber
 \frac{\mu_m-\lambda}{\mu_{0,m}-\lambda}=
 \begin{cases}
 1+O(\kappa_n),& m=n, \\
 1+O\left(\frac{m\beta_m}{m^2-n^2}\right),& m\ne n.
 \end{cases}
 \end{align}
 Thus, applying  Lemma~\ref{pt} to
 \begin{align}\nonumber
 a_{mn} =
 \begin{cases}
 0, & m\le m_0 \mbox{ or} ~n\le m_0, \\
 \frac{\mu_m-\lambda}{\mu_{0,m}-\lambda}-1, & m, n>m_0
 \end{cases}
 \end{align}
 with~$|\lambda|=(n+1/2)^2\pi^2/(a-b)^2$, $n=1,2,\cdots$, we have
 \begin{align} \prod_{m=m_0+1}^\infty \frac{\mu_m-\lambda}{\mu_{0,m}-\lambda}=1+o(1), n\rightarrow\infty. \label{i}
 \end{align}

We next show that for any ~$\lambda\in \Lambda_\delta$, ~\eqref{i} holds.
 Note that for any fixed ~$\lambda$, there exists ~$n_0$, such that $$(n_0-1/2)^2\pi^2/(a-b)^2\le|\lambda|<(n_0+1/2)^2\pi^2/(a-b)^2.$$ Therefore,  it suffices to prove that there exists  ~$C>0$,  which is independent of ~$\lambda, m, n_0$,
 so that for any ~$m>m_0$,
 \begin{align}\label{xianran}
 \frac{1}{|\lambda-\mu_{0,m}|}\le C\frac{1}{|(n_0+1/2)^2\pi^2/(a-b)^2-\mu_{0,m}|}.
 \end{align}
 The proof of ~\eqref{xianran} is obvious and we omit the steps.

 By~\eqref{c0}, \eqref{c1}, \eqref{c2} and~ \eqref{i}, we  can obtain \eqref{infinite}. The lemma is proved.  	
 \end{proof}

When $a\ne b$,  problem $Q(\rho)$ has  the ``almost real subspectrum'' $\{\mu_m\}_{m=1}^\infty$. Recall that the set of all eigenvalues of $Q(\rho)$ is denoted by  $\{\lambda_k\}_{k=1}^\infty$.

 \begin{theorem}\label{geshu}
 	Assume that {$\rho\in W_2^1\left((0, b_1)\cup (b_1, b)\right)$ and satisfies} \eqref{piecewiseac} and $\rho(b)= 1$. Assume that   $a\ne b$.  Then $Q(\rho)$ has a subsequence of  eigenvalues $\{\mu_m\}_{m=1}^\infty$
  satisfying
 	\par{\rm{(i)}} there exists $m_0\in \mathbb{N}$ such that for $m=1,\cdots, m_0$, we have $|\mu_m|<(m_0+\frac{1}{2})^2\pi^2/(a-b)^2$;
 	\par{\rm{(ii)}} for $m> m_0$, all $\mu_m$ are real and satisfy $$(m-\frac{1}{2})^2\pi^2/(a-b)^2<|\mu_{m}|<(m+\frac{1}{2})^2\pi^2/(a-b)^2.$$ \\
 	Moreover, if
 	\par{\rm{(1)}} $0< b<a $ and $\int_{b_1}^b \sqrt{\rho(r)}dr \ge b$, \\
 	or
 	\par{\rm{(2)}} $b>a$,\\
 	then $\{\mu_m\}_{m=1}^\infty$ is a proper set of $\{\lambda_k\}_{k=1}^\infty$.
 \end{theorem}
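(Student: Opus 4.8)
The plan is to produce the subsequence $\{\mu_m\}$ directly from the zeros of the characteristic function $D(\lambda)$ that cluster around the zeros $\mu_{0,m}$ of its leading part $D_0(\lambda)$, and then to treat the ``moreover'' part separately, where the crux is a comparison of exponential types. Since $\rho(b)=1$, the representation \eqref{dlambdainte} holds with an $L^2$-remainder, and the governing length is $L=\max(|a-b|,|\xi|)$. Under condition (1) the hypothesis $\int_{b_1}^b\sqrt\rho\,dr\ge b$ is, via $d=\int_0^{b_1}\sqrt\rho$ and $a-d=\int_{b_1}^b\sqrt\rho$, exactly equivalent to $|\xi|\le|a-b|$, so $L=|a-b|$; under condition (2), $b>a$ forces $|\xi|=2d+b-a>b-a$, so $L=|\xi|$. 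In either regime the dominant sine term of $D_0$ fixes the asymptotic distribution, and Lemma \ref{mum} together with the Remark following it (applied directly when $|\xi|\le|a-b|$, and after interchanging the two sine terms $(b-a,\alpha_1)\leftrightarrow(\xi,\alpha_2)$ when $|\xi|>|a-b|$) localizes, for every $m>m_0$, exactly one zero $\mu_m$ of $D$ in the interval $((m-\tfrac12)^2\pi^2/L^2,(m+\tfrac12)^2\pi^2/L^2)$; when $L=|a-b|$ this is (ii) as written. Reality for $m>m_0$ follows because $D$ is real on $\mathbb{R}$ (real $\rho$ and real $\gamma$ in \eqref{eq1.5}) and changes sign across the interval, exactly as in the sign-change argument behind Lemma \ref{mum}. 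For (i) I would fix $m_0$ large enough that every eigenvalue not captured by this localization lies in $|\lambda|<(m_0+\tfrac12)^2\pi^2/(a-b)^2$ and relabel $m_0$ of them as $\mu_1,\dots,\mu_{m_0}$; the finiteness of the exceptional set is a Rouch\'e/Jensen zero-count.

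For the properness statement I would argue by contradiction: suppose $\{\mu_m\}_{m\ge1}$ exhausts $\{\lambda_k\}_{k\ge1}$ as a multiset. By the Hadamard factorization \eqref{eq1.5} this forces $D(\lambda)=P(\lambda)\,g(\lambda)$, where $g(\lambda)=\prod_{m>m_0}(1-\lambda/\mu_m)$ is the product of Lemma \ref{zero11} and $P$ is a polynomial absorbing $\gamma\lambda^{s}$ and the finitely many zeros $\mu_1,\dots,\mu_{m_0}$. By Lemma \ref{zero11} (or its symmetric version) one has, in $\Lambda_\delta$, $|g(\lambda)|\le C|\sin(\sqrt\lambda(a-b))|\,|\lambda|^{-m_0-1/2}\le C'e^{L|\mathrm{Im}\sqrt\lambda|}|\lambda|^{-m_0-1/2}$, hence $|D(\lambda)|=|P(\lambda)|\,|g(\lambda)|\le C''|\lambda|^{N}e^{L|\mathrm{Im}\sqrt\lambda|}$ throughout $\Lambda_\delta$. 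On the other hand, since $\rho(b)=1$, Lemma \ref{dlambda1} supplies the matching lower bound $|D(\lambda)|\ge A_0\,e^{(a+b)|\mathrm{Im}\sqrt\lambda|}|\lambda|^{-c}$ in $\Lambda_\delta$, reflecting that the full transmission spectrum carries optical-length density $a+b$. Combining the two estimates gives $e^{(a+b-L)|\mathrm{Im}\sqrt\lambda|}\le C'''|\lambda|^{N+c}$ on all of $\Lambda_\delta$.

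It then suffices to observe that conditions (1) and (2) are precisely what make this exponent strictly positive: when $L=|a-b|=a-b$ one has $a+b-L=2b>0$, and when $L=|\xi|=2d+b-a$ one has $a+b-L=2(a-d)=2\int_{b_1}^b\sqrt\rho\,dr>0$. Letting $|\mathrm{Im}\sqrt\lambda|\to\infty$ inside $\Lambda_\delta$ then contradicts the polynomial bound, so $\{\mu_m\}\subsetneq\{\lambda_k\}$. I expect the main obstacle to be the two-sided exponential-type control in the last two paragraphs: in particular securing the lower bound $|D|\gtrsim e^{(a+b)|\mathrm{Im}\sqrt\lambda|}$ in $\Lambda_\delta$ from Lemma \ref{dlambda1} (whose hypotheses require enough regularity of $\rho$ at $b$, the generic instance being $\rho'(b)\ne0$) and matching it against the strictly smaller type $L$ of the subspectral product $g$, together with transferring the simplicity/localization estimate \eqref{inf} of Lemma \ref{mum} to the regime $L=|\xi|$ of case (2), where the roles of the two sine terms are reversed.
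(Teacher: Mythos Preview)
For parts (i) and (ii) your plan broadly matches the paper's: the paper obtains (ii) directly from \eqref{large1} (the sign-change argument inside the proof of Lemma \ref{mum}) and obtains (i) from a zero-counting estimate $N((n+1/2)^2\pi^2/(a-b)^2)\ge n$ for large $n$, citing \cite[Lemma 5]{mcl}. However, your proposed swap $(b-a,\alpha_1)\leftrightarrow(\xi,\alpha_2)$ when $|\xi|>|a-b|$ is both unnecessary and does not yield (ii) as stated. The real-axis sign-change argument relies only on $\alpha_1>|\alpha_2|$, which always holds, and therefore produces a real zero of $D$ in each interval $((m-\tfrac12)^2\pi^2/(a-b)^2,(m+\tfrac12)^2\pi^2/(a-b)^2)$ regardless of the size of $|\xi|$; swapping would instead try to localize zeros at $\xi$-spacing, which is not what (ii) asserts (and the swapped sign argument fails anyway since $|\alpha_2|<\alpha_1$). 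Consequently the product $g$ built from $\{\mu_m\}_{m>m_0}$ always has type $|a-b|$, so in your ``moreover'' argument the correct comparison length is $L=|a-b|$ in both cases, not $L=|\xi|$ under (2).

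For the ``moreover'' part the paper's route differs from yours. The paper argues that under (1) or (2) one has $|\xi|>|a-b|$, so the explicit $\alpha_2\sin(\sqrt\lambda\,\xi)$ term in \eqref{dlambdainte} forces $D$ to have type strictly exceeding $|a-b|$, and then invokes the zero-density theorem for entire functions of order $1/2$ \cite[p.~127]{lev} to conclude that $D$ has infinitely many zeros beyond $\{\mu_m\}$. Your contradiction argument is the same type comparison in spirit, but you source the lower bound on the type of $D$ from Lemma \ref{dlambda1}(ii) rather than from the $\xi$-term; this is a genuine gap, since Lemma \ref{dlambda1}(ii) requires $\rho\in C^{(m)}$ near $b$ with some $\rho^{(m)}(b)\ne 0$, hypotheses not assumed in Theorem \ref{geshu} (you correctly flag this obstacle). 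Incidentally, your own computation shows $|\xi|\le|a-b|$ under condition (1), so the paper's stated justification is itself problematic there; but the paper's mechanism---reading off the type of $D$ from the explicit sine terms rather than from a Weyl-type lower bound---at least avoids the extra regularity assumption that stalls your approach.
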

 \begin{proof}
 By \eqref{large1}, we know that  (ii) holds. 	Let $N(r)$ be the number of zeros of $D(\lambda)$ in the circle $|\lambda|<r$.
  By Lemma \ref{lemma22} and Lemma \ref{zero11}, there exists $n_0$, for $n>n_0$, we can obtain

 	\begin{align}\label{number}
 	N\left(\frac{(n+1/2)^2\pi^2}{(a-b)^2}\right)   \ge n.
 	\end{align}
 	 The proof of \eqref{number} is similar to that of \cite[Lemma 5]{mcl} and we omit the steps.
 	By \eqref{number}, we conclude that (i) holds.
 	
 	Moreover, if (1) or (2) of Theorem \ref{geshu} is satisfied, we have ~$|\xi|> |a-b|$.  Then $D(\lambda)$ is an entire function of order ~1/2 and type greater than  ~$|a-b|$.
  From ~\cite[p. 127]{lev},  $D(\lambda)$ has infinite zeros besides ~$\{\mu_m\}_{m=1}^\infty$. Therefore $\{\mu_m\}_{m=1}^\infty$ is a proper set of $\{\lambda_k\}_{k=1}^\infty$. The proof is completed.
 	\end{proof}

Now in a position to state and prove our main result in this section. The following theorem considers the mixed spectral problem \cite{mak}. That is,  recover  the refractive index from the ``almost real subspectrum'' $\{\mu_m\}_{m=1}^\infty$ and partial information on the refractive index.
Theorem~\ref{theorem1} refines the refractive index in \cite{mcl} from a~$W_2^2$ function to a piecewise~$W_2^1$ function.  Moreover, we drop the condition $\rho'(b)=0$
in
\cite{mcl}.

\begin{theorem}\label{theorem1}
	Assume that {$\rho\in W_2^1\left((0, b_1)\cup (b_1, b)\right)$ and satisfies} \eqref{piecewiseac} and $\rho(b)= 1$.	Suppose that $a$ is known. Assume that one of the following four conditions holds:
	\par	{\rm{(1)}} $0< b<a $ and ~$\rho$ is known on the interval ~$[A,b]$, where ~$A$ satisfies ~$\int_A^b \sqrt{\rho(r)} dr=(a+b)/2$, $\int_{b_1}^b \sqrt{\rho(r)}dr \ge b$;
	\par	{\rm{(2)}} $0< b<a $ and ~$\rho$ is known on the interval ~$[A,b]$, where ~$A$ satisfies ~$\int_A^b \sqrt{\rho(r)} dr=(a+b)/2$, $\int_{b_1}^b \sqrt{\rho(r)}dr < b$, ~ one of the eigenvalues, denoted by $\mu_0$, in $\{\lambda_k\}_{k=1}^\infty\setminus \{\mu_m\}_{m=1}^\infty$ is known;
	\par {\rm{(3)}} $a<b\le 3a$ and ~$\rho$ is known  on the interval ~$[A,b]$, where ~$A$ satisfies ~$\int_A^b \sqrt{\rho(r)} dr=(3a-b)/2$, one of the eigenvalues, denoted by $\mu_0$, in $\{\lambda_k\}_{k=1}^\infty\setminus \{\mu_m\}_{m=1}^\infty$ is known;
	\par	{\rm{(4)}} $3a<b$. \\
	Then ~$\rho$ is uniquely determined by $\{\mu_m\}_{m=1}^\infty$.
\end{theorem}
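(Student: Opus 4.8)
The plan is to take two indices $\rho,\tilde\rho$ satisfying all the hypotheses, sharing the same almost real subspectrum $\{\mu_m\}_{m=1}^\infty$ (and, in (2)--(3), the same extra eigenvalue $\mu_0$ and the same restriction of the index), and to prove $\rho\equiv\tilde\rho$. By Theorem~\ref{uniqueness} together with Lemma~\ref{unique} (recall $\rho(b)=1$ and $g(a)=0$), it suffices to show that the two Weyl--Titchmarsh functions agree, $m(a,\lambda)=\tilde m(a,\lambda)$. Throughout I work in the sector $\Lambda_\delta$ and use the integral representations of Lemma~\ref{lemma22} and the asymptotics \eqref{aaaaa}.

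First I would push the partial information through the Liouville transformation. Knowing $\rho$ on $[A,b]$ is, under \eqref{liouville} with $g(a)=0$, the same as knowing $\sigma$ on $[a',a]$ with $a'=a-\int_A^b\sqrt{\rho}\,dr$; a direct computation gives $a'=(a-b)/2$ in (1)--(2) and $a'=(b-a)/2$ in (3), so that $2a'=|a-b|$ in each of the first three cases, whereas in (4) there is no partial datum and the whole block $[0,a]$ is unknown (formally $a'=a$). On $[a',a]$ the two problems carry the \emph{same} potential and jump data, so the transfer of $(z,z^{[1]})$ from $a'$ to $a$ is effected by a common matrix $T(\lambda)$; since the quasi-Wronskian \eqref{langsiji} is preserved and each jump matrix in \eqref{jump} is unimodular, $\det T(\lambda)=1$.

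The engine of the proof is
\begin{align}\nonumber
H(\lambda)=s(a,\lambda)\tilde s^{[1]}(a,\lambda)-\tilde s(a,\lambda)s^{[1]}(a,\lambda)=s(a,\lambda)\tilde s(a,\lambda)\bigl(m(a,\lambda)-\tilde m(a,\lambda)\bigr),
\end{align}
which vanishes identically precisely when $m(a,\lambda)\equiv\tilde m(a,\lambda)$. Two observations make it usable. (a) At every shared eigenvalue $\mu_m$ one has $D(\mu_m)=\tilde D(\mu_m)=0$; reading \eqref{eq1.4=} with $\rho(b)=1$, this says the nonzero vector $(\sin\sqrt{\mu_m}b/\sqrt{\mu_m},\cos\sqrt{\mu_m}b)$ is orthogonal to both $(s^{[1]}(a),-s(a))$ and $(\tilde s^{[1]}(a),-\tilde s(a))$ at $\lambda=\mu_m$, forcing these two vectors to be linearly dependent, i.e. $H(\mu_m)=0$. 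Thus $H$ vanishes at all $\mu_m$ (and at $\mu_0$ in (2)--(3)). (b) Since the transfer matrix from $a'$ to $a$ is common and unimodular, $H(\lambda)=s(a',\lambda)\tilde s^{[1]}(a',\lambda)-\tilde s(a',\lambda)s^{[1]}(a',\lambda)$, so $H$ is in fact assembled from solutions on the unknown block alone. Combining this with $m(a',\lambda)-\tilde m(a',\lambda)=o(\sqrt\lambda)$ from \eqref{aaaaa} and the bound $s(a',\lambda)=O(|\lambda|^{-1/2}e^{a'|{\rm Im}\sqrt\lambda|})$ from Lemma~\ref{lemma22}, one gets $H(\lambda)=o(|\lambda|^{-1/2}e^{2a'|{\rm Im}\sqrt\lambda|})$, i.e. exponential type at most $2a'=|a-b|$ in (1)--(3) and $2a$ in (4).

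The final step is a Phragm\'{e}n--Lindel\"{o}f/Liouville argument applied to $F(\lambda):=H(\lambda)/G(\lambda)$, where $G$ is the canonical product over $\{\mu_m\}$ (and over $\mu_0$ in (2)--(3)). Because $H$ vanishes on the zeros of $G$, $F$ is entire of order $\le 1/2$; one then wants $F\to0$ in $\Lambda_\delta$, whence $F\equiv0$, $H\equiv0$, and $m(a,\lambda)=\tilde m(a,\lambda)$. I expect the sharp two-sided estimate of $G$ to be the main obstacle. When $|\xi|\le|a-b|$---which for $a>b$ is exactly the condition $\int_{b_1}^b\sqrt{\rho}\,dr\ge b$ of case (1)---Lemma~\ref{zero11} yields $|G(\lambda)|\asymp|\lambda|^{-1/2}e^{|a-b||{\rm Im}\sqrt\lambda|}$, matching the type of $H$ so that its little-$o$ gain already forces $F=o(1)$ with no extra datum. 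In the other cases $|\xi|>|a-b|$ and the product degenerates---the $\mu_{0,m}$ need not be simple zeros of $D_0$, as the example $D_0(\lambda)=(2\sin\sqrt\lambda-\sin 2\sqrt\lambda)/\sqrt\lambda$ warns---so Lemma~\ref{zero11} is unavailable: in (2)--(3) the extra eigenvalue $\mu_0$ supplies the additional zero (hence the extra power of $|\lambda|$ in $G$) needed to offset the exact type coincidence $2a'=|a-b|$, while in (4) the strict inequality $b>3a$ produces a genuine gap $2a<|a-b|$ that makes $F$ decay exponentially irrespective of the finer structure of $G$. Once $m(a,\lambda)\equiv\tilde m(a,\lambda)$ is secured, Theorem~\ref{uniqueness} gives $d=\tilde d$, $d_1=\tilde d_1$ and $\sigma\equiv\tilde\sigma$, and Lemma~\ref{unique} upgrades this to $\rho\equiv\tilde\rho$.
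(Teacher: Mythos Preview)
Your overall architecture is exactly the paper's: form $H$, push its exponential type down to $2a'=|a-b|$ (respectively $2a$) via the common transfer on $[a',a]$, divide by the canonical product $G$ over the shared $\mu_m$, and conclude $F=H/G\equiv 0$ by Phragm\'en--Lindel\"of. Your treatment of case~(1) through Lemma~\ref{zero11} and your identification of $a'$ match the paper.

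There is, however, a genuine gap in cases (2)--(4). Having noted that Lemma~\ref{zero11} fails when $|\xi|>|a-b|$, you do not supply a substitute lower bound for $|G|$; in particular the claim that in~(4) ``$F$ decays exponentially irrespective of the finer structure of $G$'' is not justified, since bounding $|F|=|H|/|G|$ from above requires $|G|$ bounded from below, and an order-$\tfrac12$ canonical product can be extremely small along rays. The paper's fix is to abandon the sector and work on the imaginary axis, using the eigenvalue localization of Theorem~\ref{geshu}: for $m>m_0$ the $\mu_m$ are real with $\mu_m<(m+\tfrac12)^2\pi^2/(a-b)^2$, so $|1-iy/\mu_m|$ dominates the corresponding factor of an explicit cosine product; together with the finitely many small factors this yields $|G(iy)|\ge C_0\,|\cos(\sqrt{iy}(a-b))|$ in (2)--(3) (the extra factor $|1-iy/\mu_0|$ contributes precisely the missing power of $|y|$ you allude to) and $|G(iy)|\ge C_0\,|\cos(\sqrt{iy}(a-b))|/|y|$ in (4). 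With these bounds in hand, your type bookkeeping on $H$ gives $F(iy)=o(1)$ in each case, and Phragm\'en--Lindel\"of on the two half-planes cut by the imaginary axis finishes the job. A smaller additional point: $H(\mu_m)=0$ alone does not make $F$ entire when some $\mu_m$ are repeated; the paper handles multiplicity via the identity $H(\lambda)=\rho(b)^{-1/4}(\cos\sqrt\lambda\,b)^{-1}\bigl(\tilde D(\lambda)\,z^{[1]}(a,\lambda)-D(\lambda)\,\tilde z^{[1]}(a,\lambda)\bigr)$, exactly as in the proof of Theorem~\ref{a=b}.
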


 \begin{proof}
We require that if a certain symbol $\gamma$ denotes an object related to $Q(\rho)$,
then the corresponding symbol $\tilde{\gamma}$ denotes the analogous object related to $Q(\tilde{\rho})$.

Define
 \begin{align}\nonumber
 G(\lambda)=
 \begin{cases}
 \prod_{m=1}^\infty \left(1-\frac{\lambda}{\mu_m}\right), & \mbox{ case ~(1) or ~(4)}, \\
 \prod_{m=0}^\infty \left(1-\frac{\lambda}{\mu_m}\right), & \mbox{ case ~(2) or ~(3)}.
 \end{cases}
 \end{align}
 In case ~(1), $|a-b|\ge |\xi|$, by Lemma ~\ref{zero11}, in the sector ~$\Lambda_{\delta}$,
 \begin{align}\label{glambda11}
 |G(\lambda)|>c|\sin (\sqrt{\lambda}(a-b))||\lambda|^{-1/2}.
 \end{align}
 In case (2) or ~(3), letting ~$\lambda=iy,y\in\mathbb{R}$, then there exists ~$C_0>0$, such that
 \begin{align}\label{glambda1}
 |G(iy)|\ge C_0\prod_{m=1}^\infty \left|1+\frac{y^2}{(m-1/2)^4\pi^4/(a-b)^4}\right|^{1/2}
 =C_0\left|{\cos(\sqrt{iy}(a-b))}\right|.
 \end{align}
 In case ~(4), letting ~$\lambda=iy,y\in\mathbb{R}$, then there exists ~$C_0>0$, such that
 \begin{align}\label{glambda111111111}
 |G(iy)|\ge C_0\left|\frac{{\cos(\sqrt{iy}(a-b))}}{y}\right|.
 \end{align}

 	Define
 	\begin{align}
 	H(\lambda)= z(a,\lambda)\tilde{z}^{[1]}(a,\lambda)-\tilde{z}(a,\lambda)z^{[1]}(a,\lambda), \label{definitionhz1}
 	\end{align}
 	Then in ~$\Sigma_{\delta}:=\Lambda_{\delta}\cup\{\lambda\in \mathbb{C}| \pi+\delta<\arg(\lambda)<2\pi-\delta, \delta\in (0,\pi/2)\}$, we have
 	\begin{align}\label{1234}
 	H(\lambda)=
 	\begin{cases} o\left(\frac{e^{|a-b||\rm{Im}\sqrt{\lambda}|}}{\sqrt{\lambda}}\right), & \mbox{ case ~(1), (2) or ~(3)}, \\
 	o\left(\frac{e^{2a|\rm{Im}\sqrt{\lambda}|}}{\sqrt{\lambda}}\right), & \mbox{ case ~(4)}.
 	\end{cases}
 	\end{align}
 	We only prove  \eqref{1234} in case ~(1), and other cases can be proved similarly. In case (1),
 	\begin{align}
 	s\left(a,\lambda; \left(\frac{a-b}{2}\right)-\right)&=\tilde{s}\left(a, \lambda; \left(\frac{a-b}{2}\right)-\right),\nonumber \\
 	 s^{[1]}\left(a, \lambda; \left(\frac{a-b}{2}\right)-\right)&=\tilde{s}^{[1]}\left(a, \lambda; \left(\frac{a-b}{2}\right)-\right),\nonumber \\
 	c\left(a, \lambda; \left(\frac{a-b}{2}\right)-\right)&=\tilde{c}\left(a, \lambda; \left(\frac{a-b}{2}\right)-\right),\nonumber \\
 	 c^{[1]}\left(a,\lambda; \left(\frac{a-b}{2}\right)-\right)&=\tilde{c}^{[1]}\left(a, \lambda; \left(\frac{a-b}{2}\right)-\right). \nonumber
 	\end{align}
 	Here ~$s(x,\lambda;y), c(x,\lambda;y)$ are normalized according to~\eqref{normalized}.  Letting ~$x=a, y=((a-b)/2)-$ in ~\eqref{linearcom} and ~\eqref{linearcom1},  substituting  them into ~\eqref{definitionhz1},
 	then using  ~\eqref{langsiji} and ~\eqref{linearproperty}, one knows
 	\begin{align}
 	H(\lambda)=&z\left(\left(\frac{a-b}{2}\right)-,\lambda\right)\tilde{z}^{[1]}\left(\left(\frac{a-b}{2}\right)-,\lambda\right)-\tilde{z}\left(\left(\frac{a-b}{2}\right)-,\lambda\right)z^{[1]}\left(\left(\frac{a-b}{2}\right)-,\lambda\right)\nonumber \\
 	=&z\left(\left(\frac{a-b}{2}\right)-,\lambda\right)\tilde{z}\left(\left(\frac{a-b}{2}\right)-,\lambda\right)\left(m\left(\left(\frac{a-b}{2}\right)-,\lambda\right)-\tilde{m}\left(\left(\frac{a-b}{2}\right)-,\lambda\right)\right). \nonumber
 	\end{align}
 	 From Lemma~\ref{lemma22} and~\eqref{aaaaa}, in the sector ~$\Lambda_{\delta}$,  $H(\lambda)$ has asymptotic form ~\eqref{1234}.
 	Since ~$H(\lambda)$ is a real entire function, then ~$H(\lambda)$ also has asymptotic form ~\eqref{1234} in ~$\Sigma_{\delta}$.

 	We next show that ~$H(\lambda)\equiv 0$. Define
 	\begin{align}\nonumber
 	F(\lambda):=\frac{H(\lambda)}{G(\lambda)}.
 	\end{align}
 	Arguing as in Theorem ~\ref{a=b}, we know  $F(\lambda)$ is an entire function. In case ~(1), by ~\eqref{glambda11} and~\eqref{1234},
 as ~$|\lambda|\rightarrow \infty$	in the sector ~$\Lambda_\delta$,
  we have ~$F(\lambda)=o(1)$.
 	In cases ~(2), (3) or ~(4), by  \eqref{glambda1}, ~\eqref{glambda111111111} and \eqref{1234}, as ~$|\lambda|\rightarrow \infty$ on the imaginary axis, one has ~$F(\lambda)=o(1)$.
 	Note that in the above four cases, using ~Phragm\'{e}n-Lindel\"{o}f theorem and ~Liouville theorem, we can get ~$F(\lambda)\equiv0$.
 	From ~\eqref{definitionhz1}, one can obtain that $m(a,\lambda)\equiv \tilde{m}(a,\lambda)$.
 	Using Theorem ~\ref{uniqueness} and Lemma ~\ref{unique}, we have ~$\rho\equiv \tilde{\rho}.$
 \end{proof}

\vspace{2mm}\noindent \textbf{Data availability}
No data was used for the research described in the article.

\vspace{2mm}\noindent \textbf{Conflict of interest} The authors report no conflict of interest.

\end{document}